\pgfplotsset{compat=1.11}
\newcolumntype{e}{>{\displaystyle}r @{\,} >{\displaystyle}c @{\,} >{\displaystyle}l}
\theoremstyle{plain}
\newtheorem{Theorem}{Theorem}[section]
\newtheorem{Corollary}[Theorem]{Corollary}
\newtheorem{Lemma}[Theorem]{Lemma}
\newtheorem{Proposition}[Theorem]{Proposition}
\theoremstyle{definition}
\newtheorem{Definition}[Theorem]{Definition}
\newtheorem{Remark}[Theorem]{Remark}
\newcommand{\E}{\mathbb{E}}
\newcommand{\R}{\mathbb{R}}
\newcommand{\N}{\mathbb{N}}
\newcommand{\Z}{\mathbb{Z}}
\renewcommand{\P}{\mathbb{P}}
\newcommand{\m}{\mathbf{m}}
\newcommand{\Proba}{\mathbb{P}}
\newcommand{\range}[2]{[\![#1,#2]\!]}
\numberwithin{equation}{section}
  \newcounter{constant}
\begin{document}

\title{First Passage Percolation with Recovery}

\author{Elisabetta Candellero\footnote{elisabetta.candellero@uniroma3.it;  University of Roma Tre, Rome} 
\and 
Tom Garcia-Sanchez\footnote{tom.garcia-sanchez@ens.psl.eu -- \'Ecole Normale Sup{\'e}rieure Paris (Master student)}
}

\maketitle

\begin{abstract}
First passage percolation with recovery is a process aimed at modeling the spread of epidemics.
On a graph $G$ place a red particle at a reference vertex $o$ and colorless particles (seeds) at all other vertices.
The red particle starts spreading a \emph{red first passage percolation} of rate $1$, while all seeds are dormant.
As soon as a seed is reached by the process, it turns red and starts spreading {red first passage percolation}. 
All vertices are equipped with independent exponential  clocks ringing at rate $\gamma>0$, when a clock rings the corresponding \emph{red vertex turns black}.
For $t\geq 0$, let $H_t$ and $M_t$ denote the size of the longest red path and of the largest red cluster present at time $t$. 
If $G$ is the semi-line, then for all $\gamma>0$ almost surely $\limsup_{t}\frac{H_t\log\log t}{\log t}=1 $ and $\liminf_{t}H_t=0$.
In contrast, if $G$ is an infinite Galton-Watson tree with offspring mean $\m>1$ then, for all $\gamma>0$, 
almost surely $\liminf_{t}\frac{H_t\log t}{t}\geq\m-1 $ and $\liminf_{t}\frac{M_t\log\log t}{t}\geq \m-1$, while $\limsup_{t} \frac{M_t}{e^{c t}}\leq 1$, for all $c>\m -1$.
Also, almost surely as $t\to \infty$, for all $\gamma>0$ $H_t$ is of order at most $t$.
Furthermore, if we restrict our attention to bounded-degree graphs, then for any $\varepsilon>0$ there is a critical value $\gamma_c>0$ so that for all $\gamma>\gamma_c$, almost surely $\limsup_{t}\frac{M_t}{t}\leq \varepsilon $.
\end{abstract}

\textbf{Keywords}:
First passage percolation, first passage percolation in hostile environment, competition, recovery, branching processes, Galton-Watson trees, epidemics

\section{Introduction}\label{sect:introduction}

In this work we study a natural process which can be seen as an epidemic process where individuals can recover, but they can still transmit the disease if they are close to a susceptible individual.
Intuitively, the process is defined as follows.
On a graph $G$ (which we can imagine infinite, locally finite and connected) place a \emph{red particle} at some reference vertex $o$ and \emph{colorless particles} at all other vertices.
At time $0$ the red particle starts spreading a \emph{red} first passage percolation (FPP) of rate $1$, which we can think of a disease starting at a single source and spreading throughout a network.
As soon as a seed is reached by the process, it instantly turns red and starts spreading red FPP.
All vertices are equipped with independent exponential clocks that ring at a given rate $\gamma>0$; as soon as a clock rings, the corresponding vertex \emph{turns black}.
This fact does not inhibit the red process; in fact, active seeds still spread red FPP, even if their vertices turned black.


This natural model is motivated by the following interpretation.
Suppose that vertices of $G$ represent individuals in a community where red FPP represents an infectious disease. 
Then, red vertices represent infected individuals and black vertices those that have recovered from the disease.
The \emph{recovery rate} clearly represents the rate at which a sick individual becomes healthy.
Here we always assume that a recovered individual will not be able to be infected again.
However, recovered individuals can spread the infection to neighboring sites (those hosting seeds).

We shall study the asymptotic behavior of the size of the longest red path and of the largest red cluster in two cases, namely when $G$ is the semi-line $\N$, and when $G$ is a supercritical Galton-Watson tree (including the trivial case of a homogeneous tree).

To the best of our knowledge, this is the very first time that such a process is analyzed, and we hope that this work will trigger further investigation in this direction.

\subsection{Main results}
In the following we shall refer to ``\emph{red} vertices'' as those whose seeds have been activated by red FPP but their corresponding \emph{recovery clock} hasn't rung yet.
Similarly, ``\emph{black} vertices'' are the ones already reached by red FPP whose recovery clock has already rung.

For all $t\in\R$ let $H_t$ be the \emph{size of the longest (oriented) path consisting of red vertices} present at time $t$, and $M_t$ be the \emph{size of the largest red cluster} present at time $t$.
The present work focuses on the asymptotic behavior of $H_t$ and $M_t$ as $t\to \infty$, when $G$ is a tree (deterministic or random).
More precisely, we show the following results.

\begin{Theorem}
\label{thm_critical}
When $G$ is the infinite semi-line $\N$ then, for all recovery rates $\gamma>0$, 
\begin{itemize}
\item[(i)] $\P \left ( \limsup_{t\rightarrow+\infty}\frac{H_t\log\log t}{\log t}=1 \right )=1$;
\item[(ii)] $\P \left ( \liminf_{t\rightarrow+\infty}H_t=0\right ) =1$.
\end{itemize}
\end{Theorem}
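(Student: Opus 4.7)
The plan is to base both parts on the random variable $L_n$, the length of the red run ending at vertex $n$ at time $T_n^+$, where $T_n = \xi_1+\cdots+\xi_n$, the $\xi_i$ are iid $\mathrm{Exp}(1)$ FPP increments, and the $R_i$ are iid $\mathrm{Exp}(\gamma)$ recovery clocks (independent of the $\xi$'s). The event $\{L_n \ge k\}$ says $R_{n-j} > T_n - T_{n-j}$ for every $j = 1,\dots,k-1$; integrating out the $R$'s and using the identity $\sum_{j=1}^{k-1}(T_n-T_{n-j}) = \sum_{\ell=1}^{k-1}\ell\,\xi_{n-k+1+\ell}$ yields the closed form
\[ \P(L_n \ge k) \;=\; \prod_{\ell=1}^{k-1}\frac{1}{1+\gamma\ell}, \]
from which Stirling gives $-\log \P(L_n \ge k) = k\log k\,(1+o(1))$. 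In particular, at $k = c\log n/\log\log n$ this is $\P(L_n \ge k) = n^{-c+o(1)}$. The bridge to $H_t$ is the deterministic comparison $H_t \le \max_{m \le N_t} L_m$: if $[a,a+k-1]$ is a red run at time $t$, then from $t \ge T_{a+k-1}$ and $R_{a+j} > t - T_{a+j} \ge T_{a+k-1}-T_{a+j}$ we deduce that the same $k$ vertices were already red at $T_{a+k-1}^+$, hence $L_{a+k-1}\ge k$.

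For the upper bound in (i), I would take dyadic times $t_j = 2^j$ and $k_j = \lceil(1+\varepsilon)\log t_j/\log\log t_j\rceil$. Poisson concentration gives $N_{t_{j+1}} \le 2 t_{j+1}$ off an event of exponentially small probability, so a union bound yields $\P\big(\max_{m \le N_{t_{j+1}}} L_m \ge k_j\big) = O(t_j^{-\varepsilon+o(1)})$, which is summable. Borel--Cantelli~I together with the $t$-monotonicity of $\max_{m\le N_t}L_m$ then force $\sup_{t \in [t_j, t_{j+1}]} H_t < k_j$ eventually, and the ratio $\log\log t_{j+1}/\log\log t_j \to 1$ recovers $\limsup_t H_t \log\log t/\log t \le 1+\varepsilon$. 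For the lower bound I would take a sparse subsequence $n_j = \lceil j(\log j)^3\rceil$ and $k_j = \lfloor(1-\varepsilon)\log n_j/\log\log n_j\rfloor$. The event $\{L_{n_j}\ge k_j\}$ is measurable with respect to $\xi_{n_j-k_j+2},\dots,\xi_{n_j}$ and $R_{n_j-k_j+1},\dots,R_{n_j-1}$, a window of width at most $\log j$, which is $\ll n_{j+1}-n_j$, so these events are mutually independent. Each has probability at least $j^{-(1-\varepsilon/4)}$ for $j$ large, so Borel--Cantelli~II produces infinitely many $j$ with $H_{T_{n_j}}\ge L_{n_j}\ge k_j$; since $T_{n_j}\sim n_j$ almost surely, this gives $\limsup_t H_t\log\log t/\log t\ge 1-\varepsilon$. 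Sending $\varepsilon\downarrow 0$ completes (i).

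Assertion (ii) is a consequence of a separate, purely one-dimensional observation: the total count $\mathcal R(t)$ of red vertices is itself a Markov chain. Indeed, by memorylessness of $\xi_{N_t+1}$, the next FPP arrival at $N_t+1$ occurs after an $\mathrm{Exp}(1)$ waiting time independent of the current configuration, and each of the $\mathcal R(t)$ red vertices dies at rate $\gamma$ independently, so $\mathcal R(t)$ is exactly the $M/M/\infty$ queue with parameters $(\lambda,\mu)=(1,\gamma)$. This chain is positive recurrent with stationary distribution $\mathrm{Poisson}(1/\gamma)$; in particular state $0$ is visited at arbitrarily large times almost surely, and at such times $H_t = 0$, giving $\liminf_t H_t = 0$.

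The main obstacle is calibrating the subsequence in the lower bound of (i): the $n_j$ must be spaced enough to decorrelate the events $\{L_{n_j}\ge k_j\}$ (whose natural correlation window has size $k_j\sim\log j$), yet not so sparsely that $\sum_j \P(L_{n_j}\ge k_j)$ becomes summable. The choice $n_j = j(\log j)^3$ is cushioned on both sides, but one has to be careful that the Stirling asymptotic for $\P(L_n\ge k)$ is matched to the first order in the exponent, since any larger loss would destroy the match between upper and lower bound. A more cosmetic difficulty is passing from the discrete-in-$n$ estimates to the continuous-in-$t$ statement, handled by the monotonicity $t\mapsto\max_{m\le N_t}L_m$ on a dyadic grid.
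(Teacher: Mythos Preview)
Your proposal is correct and follows essentially the same route as the paper: for (i) you compute the exact law of the tail red run $L_n$ (the paper's $\widehat{H}_n$), obtain $-\log\P(L_n\ge k)\sim k\log k$ via Stirling, and run Borel--Cantelli I/II along suitable subsequences together with the comparison $H_t\le\max_{m\le N_t}L_m$ (one direction of the paper's Proposition~3.3); the only cosmetic differences are your choice of dyadic times for the upper bound and the near-linear sequence $n_j=j(\log j)^3$ for the lower bound, versus the paper's use of all $n$ and the polynomial sequence $k_n=\lfloor n^s\rfloor$. For (ii) your argument is the same birth--death observation the paper makes in Section~3.2, but your identification of $\#R_t$ as an $M/M/\infty$ queue with stationary law $\mathrm{Poisson}(1/\gamma)$ is a slightly cleaner way to conclude recurrence of state $0$ than the paper's comparison with reflected simple random walk.
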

Roughly speaking, when $G=\N$ then the ``largest'' red component gets of order $\frac{\log t}{\log \log t}$, when $t$ is large.
However, even if this event occurs infinitely often, it is not true that anything close to this size is maintained, as the second item shows.
In fact, almost surely, there are arbitrary large times so that there is no red path in the graph, because all infected vertices have recovered.
In contrast, the next theorem shows that whenever $G$ is a super-critical Galton-Watson tree with finite mean then eventually there will be non-trivial red paths (and red clusters) in the graph.
More precisely, for all large times $t$, the longest red path is of order at least $\frac{t}{\log t}$ and the largest red cluster is of order at least $\frac{t}{\log \log t}$.
The statement includes as well the ``degenerate'' case of a complete infinite $d$-ary tree with branching number  $d\geq 2$. 

\begin{Theorem}
\label{thm_supercritical}
If $G$ is a supercritical Galton-Watson tree whose offspring distribution has finite mean $\m>1$, then for all $\gamma>0$, 
\begin{itemize}
\item[(i)] $\displaystyle \P \left ( \liminf_{t\rightarrow+\infty}\frac{H_t\log t}{t}\geq\m-1 \mid G\text{ is infinite}\right ) =1 $;
\item[(ii)] $\displaystyle \P \left ( \liminf_{t\rightarrow+\infty}\frac{M_t\log\log t}{t}\geq \m-1 \mid G\text{ is infinite} \right )=1$.
\end{itemize}
\end{Theorem}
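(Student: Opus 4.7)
Both parts rely on the fact that, conditionally on $G$ being infinite, the set of infected vertices at time $t$ forms a supercritical continuous-time branching process with offspring intensity $\m e^{-a}\,da$ and hence Malthusian parameter $\alpha = \m-1$: on the survival event its size is $\sim W e^{(\m-1)t}$ for some $W>0$ a.s., and the stable-age distribution of infected vertices gives concentration for sums of ages (Nerman / Kesten--Stigum-type results). The plan in both parts is a first-moment count over "good" candidate structures, followed by a concentration/Borel--Cantelli step to pass from existence-in-expectation to the almost-sure $\liminf$.

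For part~(i), I would look for red paths of length $n := \lceil (1-\epsilon)(\m-1)t/\log t\rceil$ rooted at infected vertices $v_0$ of age $s$ to be optimized. For a given path $v_0,\ldots,v_n$ with edge-weights $X_1,\ldots,X_n$ iid $\mathrm{Exp}(1)$ and $\sum_i X_i=\sigma\le s$, the probability of being all red at time $t$ equals $e^{-\gamma \sum_{j=0}^n (t-T_{v_j})} = e^{-\gamma[(n+1)s-\sum_i(n-i+1)X_i]}$. Conditioning on $\sigma\approx s$, the $X_i/s$ are uniform on the simplex, so $\E[\sum_i(n-i+1)X_i\mid\sigma]\approx s(n+1)/2$ and the sum of ages has mean $\sim ns/2$; by Jensen the survival probability is at least $e^{-\gamma ns/2}$. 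Combining with the expected number of paths of length $n$ with weight near $s$ per vertex (essentially $m^n s^n/n!$, from the $\Gamma(n,1)$ density) and integrating over candidate $v_0$'s with density $\sim(\m-1)e^{(\m-1)(t-s)}$, the expected number of red paths is bounded below by an integral whose logarithm is approximately
\[
(\m-1)(t-s) + n\log(mes/n) - \gamma ns/2 + O(\log n).
\]
Differentiating in $s$ gives the critical value $s \approx 2/\gamma$ (a constant in $t$); plugging in and using Stirling, the exponent becomes $(\m-1)t - n\log n + O(n)$, which is positive for $n\le(1-\epsilon)(\m-1)t/\log t$ by the self-consistent choice of $n$.

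For part~(ii) I would instead use that a red \emph{cluster} need not be a single line. Take $s \approx \log k/(\m-1)$ with $k:=\lceil (1-\epsilon)(\m-1)t/\log\log t\rceil$; the subtree of a vertex $v_0$ of age $s$ is concentrated in size around $e^{(\m-1)s}\sim k$, and by the CTBP law of large numbers the sum of ages $\sum_v a_v$ inside this subtree concentrates around $k/(\m-1)$, giving $\P(\text{whole subtree red})\approx e^{-\gamma k/(\m-1)} = e^{-(1-\epsilon)\gamma t/\log\log t}$. Multiplying by the $e^{(\m-1)(t-s)}\sim e^{(\m-1)t}/k$ candidates for $v_0$ yields leading exponent $(\m-1)t - \log k - \gamma k/(\m-1) = (\m-1)t(1+o(1))\to\infty$, so such a cluster exists in expectation.

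The two obstacles I expect to be the most delicate are (a) making the first-moment lower bounds rigorous via a second-moment or Chebyshev-type estimate: the candidate paths in (i) share starting vertices and ancestors, so the covariance terms require a careful inclusion--exclusion/tree-structure argument, and similarly for overlaps of candidate subtrees in (ii); and (b) upgrading from "event holds at a sequence of deterministic times" to the $\liminf$ over all $t$. For the latter I would use that, at the times $t_k$ where the estimate is invoked, $H_{t_k}$ (resp.\ $M_{t_k}$) remains almost unchanged on intervals of length $O(1/(n\gamma))$ (resp.\ $O(1/(k\gamma))$)---in such an interval no vertex of the chosen path or cluster recovers with high probability---together with Borel--Cantelli along a polynomial-density grid, to deduce the stated almost-sure lower bounds on $\liminf_t H_t\log t/t$ and $\liminf_t M_t\log\log t/t$.
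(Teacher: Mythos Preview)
Your program is plausible but takes a substantially different route from the paper, and the difference is instructive.

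You try to \emph{count} good red structures by a first-moment computation and then close with a second-moment/Paley--Zygmund step. The paper instead bounds the probability of the \emph{bad} event $\{\mathcal Q_t\le m\}$ directly, exploiting the tree structure to get genuine independence and thereby avoiding any second-moment work. Concretely: condition on the external boundary $\partial^*A_{t-1}$; the subtrees rooted at these boundary vertices are i.i.d.\ copies of the original process, and each boundary vertex is reached at time $t-1+T$ with $T\sim\mathrm{Exp}(1)$ by memorylessness. Hence $\P(\mathcal Q_t\le m,\ \#\partial^*A_{t-1}\ge n)\le \eta(m)^n$ where $\eta(m):=\P(\mathcal Q_{1-T}\le m)$ is a \emph{single unit-time-window} estimate. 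Since $n\sim e^{\boldsymbol\alpha t}$ on survival, one only needs $1-\eta(m)\gtrsim e^{-\boldsymbol\alpha t}$, and the paper checks this by an elementary lower bound on $\P(H_s>m)$ (a fixed leftmost path of length $m$, all edges firing in time $O(1/m)$, no recovery in time $1$), giving $-\log(1-\eta(m))\sim m\log m$ for $H$ and $\sim m\log\log m$ for $M$ (using a binary subtree of depth $\log_2 m$). Plugging $m=h(e^{\beta t})$ then yields exactly the functions $t/\log t$ and $t/\log\log t$. This is a first-Borel--Cantelli argument on the failure event; no covariance computation ever enters.

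Your obstacle (a) is therefore not a minor technicality you can defer: it is the main place where your route is harder than the paper's, and as written you have no mechanism to control the overlap of candidate paths/subtrees sharing ancestors. Your obstacle (b) is also handled differently: rather than arguing that a specific path survives on a short interval, the paper uses the monotonicity $\mathcal Q_t\ge \mathcal Q^-_{\theta_{\#A_t+1}}$ between consecutive arrival times, together with a ``splitting'' of the tree into at least two independent infinite subtrees so that the discrete time-sequence $(\theta_n)$ in one subtree is independent of the process in the other; this lets Lemma~4.8 (a Borel--Cantelli along $(\theta_n)$) apply cleanly. Your stability-on-short-intervals idea could probably be made to work, but it again requires extra estimates the paper's argument does not need.
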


As a consequence of Theorem \ref{thm_supercritical} we obtain the following result, where $\#A_t$ denotes the number of vertices reached by the process (either red or black) by time $t\geq 0$.

\begin{Corollary}
\label{coro_lb_gw_at}
Under the hypotheses of Theorem \ref{thm_supercritical} we have 
\[
\P \left ( \liminf_{t\rightarrow+\infty}\frac{H_t\log\log(\#A_t)}{\log(\#A_t)}\geq1 \mid G \text{ is infinite}\right )=1.
\]
\end{Corollary}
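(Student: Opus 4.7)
The plan is to combine the lower bound on $H_t$ given by Theorem~\ref{thm_supercritical}(i) with a matching exponential upper bound on $\#A_t$, and then exploit the monotonicity of $x\mapsto x/\log x$. Conditionally on $G$ being infinite, Theorem~\ref{thm_supercritical}(i) gives that for each fixed $\delta>0$, almost surely $H_t\geq (\m-1-\delta)\,t/\log t$ for all $t$ large enough. Thus it suffices to show that, a.s.\ conditionally on non-extinction, for every $\delta>0$ one has $\log(\#A_t)\leq (\m-1+\delta)\,t$ eventually.

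For this upper bound I would note that, since recovery has no effect on $\#A_t$, the process $t\mapsto\#A_t$ is exactly the population size at time $t$ of the continuous-time branching process in which every infected vertex transmits the infection to each of its Galton-Watson children after an independent $\text{Exp}(1)$ delay. Averaging over both tree and clocks and conditioning on the first infection leaving the root yields the renewal equation
\[
\E[\#A_t]=1+\m\int_0^t e^{-(t-s)}\,\E[\#A_s]\,ds,
\]
whose explicit solution is $\E[\#A_t]=\tfrac{\m}{\m-1}e^{(\m-1)t}-\tfrac{1}{\m-1}$, identifying the Malthusian parameter as $\m-1$. Markov's inequality then gives $\P(\#A_n\geq e^{(\m-1+\delta)n})\leq C\,e^{-\delta n}$, which is summable over integers $n\in\N$; Borel-Cantelli combined with the monotonicity of $t\mapsto\#A_t$ upgrades this to an almost sure bound valid for all real $t$ large enough.

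To conclude, observe that on the event $\{G\text{ is infinite}\}$ one has $\#A_t\to\infty$ a.s., so eventually $\log(\#A_t)>e$; since $x\mapsto x/\log x$ is increasing on $(e,\infty)$, the previous upper bound gives
\[
\frac{\log(\#A_t)}{\log\log(\#A_t)}\leq\frac{(\m-1+\delta)\,t}{\log((\m-1+\delta)\,t)}=\frac{(\m-1+\delta)\,t}{\log t}\,(1+o(1)).
\]
Dividing the lower bound on $H_t$ by this quantity yields
\[
\liminf_{t\to\infty}\frac{H_t\log\log(\#A_t)}{\log(\#A_t)}\geq\frac{\m-1-\delta}{\m-1+\delta},
\]
and letting $\delta\downarrow 0$ finishes the proof. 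The main step is the almost sure upper bound on $\#A_t$ with exponential rate $\m-1$; once this is in hand, translating the $t/\log t$ rate of $H_t$ into the $\log/\log\log$ form required by the statement is a cosmetic consequence of the monotonicity of $x/\log x$.
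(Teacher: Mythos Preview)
Your proof is correct and follows the same overall pattern as the paper's: combine Theorem~\ref{thm_supercritical}(i) with an exponential growth bound on $\#A_t$ and the monotonicity of $x\mapsto x/\log x$. The only difference is in how the bound on $\log(\#A_t)$ is obtained. The paper already has the two-sided asymptotic $\log(\#A_t)\sim(\m-1)t$, $\P_{\mathcal S}$-a.s., available from Lemma~\ref{lemma_controlled_growth} and Remark~\ref{rem:auxiliary-new-theorem} (which ultimately rest on the Athreya--Karlin/Athreya--Ney results on continuous-time branching processes), so its proof of the corollary is a one-line substitution. You instead give a self-contained first-moment argument: solve the renewal equation for $\E[\#A_t]$ explicitly, apply Markov's inequality along integer times, then Borel--Cantelli and monotonicity of $t\mapsto\#A_t$. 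This is more elementary and avoids any external citation, at the price of yielding only the upper bound on $\log(\#A_t)$ --- which, as you correctly observe, is all the corollary requires. Since the paper has the sharper two-sided estimate already in hand, its route is shorter; yours would be preferable in a setting where Lemma~\ref{lemma_controlled_growth} was not already established.
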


A consequence of 
\cite{Ariascastro} gives the following bounds on the limsup.
In particular, the first two items follow from bounding the size of the cluster $\# A_t$.

\begin{Proposition}\label{prop:blue}
Under the hypotheses of Theorem \ref{thm_supercritical}, for all $\delta>0$,
\[
\P \left ( \limsup_{t\to+\infty} \frac{M_t}{e^{(\m -1+\delta)t}}=0  \right )=1.
\]
Moreover, there exists a constant $\bar{c}>1$ such that
\[
\P \left ( \limsup_{t\to+\infty} \frac{ H_{t}}{t} \leq \bar{c}  \right ) =1.
\]
Furthermore, when $G$ has bounded degree, then for any $\varepsilon>0$  there is a critical value $\gamma_c>0$ so that for all $\gamma>\gamma_c$, $\P \left ( \limsup_t \frac{M_{t}}{t}\leq \varepsilon \right )=1$.
\end{Proposition}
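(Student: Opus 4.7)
\emph{Items (i) and (ii).} Both quantities are controlled by their colour-agnostic counterparts on the underlying FPP process: $M_t\le\#A_t$ (the red cluster is a subset of the infected set), and $H_t\le D_t$ where $D_t:=\max_{v\in A_t}d_G(o,v)$ is the maximal graph distance from $o$ within $A_t$. Forgetting colours, $(\#A_t)_{t\ge 0}$ is the total population of an age-dependent (Bellman--Harris) branching process on $G$ in which each infected individual produces offspring, distributed as the offspring law of $G$, at independent $\mathrm{Exp}(1)$ waiting times. Its Malthusian parameter solves $\m\int_0^\infty e^{-(\alpha+1)s}\,ds=1$, so $\alpha=\m-1$; Nerman's convergence theorem then gives $e^{-(\m-1+\delta)t}\#A_t\to 0$ a.s.\ for every $\delta>0$, which is exactly~(i). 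For~(ii), $D_t$ is the maximal displacement of the associated branching random walk with $\mathrm{Exp}(1)$ edge weights; the corresponding speed result invoked in \cite{Ariascastro} gives a deterministic $\bar c\in(1,\infty)$ with $\limsup_t D_t/t\le \bar c$ a.s.\ on $\{G\text{ infinite}\}$, hence $\limsup_t H_t/t\le \bar c$ a.s.

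\emph{Item (iii): bounded-degree case.} Let $\Delta$ denote the maximum degree of $G$. The strategy is a first-moment bound combined with Borel--Cantelli along the integers. For a fixed connected $S\subset V(G)$ of size $m$, independence of the $\mathrm{Exp}(\gamma)$ recovery clocks $\{R_v\}$ from the FPP times yields
\[
\P(S\text{ all red at }t)=\E\Bigl[\prod_{v\in S}e^{-\gamma(t-T_v)}\mathbf{1}_{\{T_v\le t\}}\Bigr].
\]
Rooting $S$ at its vertex $u_0\in S$ nearest to $o$ and writing $T_v=T_{u_0}+\sum_{e\in\mathrm{path}_S(u_0,v)}X_e$ with $X_e\stackrel{\text{i.i.d.}}{\sim}\mathrm{Exp}(1)$, we tilt the indicators $\mathbf{1}_{\{T_v\le t\}}$ exponentially vertex-by-vertex, with tilt proportional to the $S$-descendant count of the edge just above~$v$. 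This converts the right-hand side into a convergent product of single-edge moment-generating functions and yields
\[
\P(S\text{ all red at }t\mid T_{u_0})\le e^{-\gamma(t-T_{u_0})}(C_\Delta/\gamma)^{m-1}
\]
for a constant $C_\Delta$ depending only on $\Delta$. Combined with the standard lattice-animal bound $\#\{S\ni u_0:|S|=m,\,S\text{ connected}\}\le(e\Delta)^m$ and the a.s.\ growth estimate $\#A_t\le e^{\beta(\Delta)t}$ valid on any graph of maximum degree $\Delta$, a union bound over $u_0$ produces
\[
\P(M_t\ge m)\le e^{\beta(\Delta)t}\bigl(e\Delta\,C_\Delta/\gamma\bigr)^m.
\]
Setting $m=\lceil\varepsilon t\rceil$ and taking $\gamma_c$ large enough that $(e\Delta C_\Delta/\gamma)^\varepsilon<e^{-\beta(\Delta)}$ for $\gamma>\gamma_c$ makes this summable along $t\in\N$; Borel--Cantelli yields $M_n\le \varepsilon n$ for all but finitely many integers $n$, and a standard interpolation (jumps of $M_t$ are $+1$ per FPP event, and there are $O(\#A_t)$ such events per unit time) transfers the bound to all real~$t$, giving $\limsup_t M_t/t\le\varepsilon$ a.s.

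\emph{Main obstacle.} The crux is the tilting estimate producing the $(C_\Delta/\gamma)^{m-1}$ factor: the edge weights $\{X_e\}_{e\in S}$ are independent but the indicators $\{T_v\le t\}_{v\in S}$ share common path-prefixes along $S$, so any naive MGF bound diverges as soon as $\gamma$ is not tiny. The tilting parameters $\lambda_v$ must be chosen so as to respect the tree structure of $S$, in such a way that after collecting the $X_e$'s each edge coefficient in the joint MGF stays strictly below~$1$, while $\sum_v\lambda_v$ essentially cancels the $\gamma m(t-T_{u_0})$ prefactor except for the single root copy. Everything else---the exponential growth bound for $\#A_t$, the lattice-animal count, and the time interpolation---is routine.
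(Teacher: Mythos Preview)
For (i) and (ii) your argument is essentially the paper's: $M_t\le\#A_t$ and $H_t\le\max_{v\in A_t}d(o,v)$, plus standard growth/speed facts for the underlying branching structure. One quibble: \cite{Ariascastro} is about subcritical percolation on trees, not branching-random-walk displacement; the paper proves $A_n\subset\mathcal{B}_o(\tilde c n)$ directly via a Chernoff/union bound on $\m^{\lfloor\tilde c n\rfloor+1}\,\P(\text{Poi}(n)>\tilde c n)$.

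For (iii) your route differs from the paper's. The paper couples the red configuration with Bernoulli site percolation of parameter $p(\gamma)=\Delta/(\gamma+\Delta)$ on $\mathbb{T}_\Delta$ (declare $v$ open when $\mathbf{C}_v>\min_w T_{(v,w)}$), observes that any red cluster in $A_t\setminus\partial A_t$ is contained in an open percolation cluster, and then quotes \cite[Theorem~2]{Ariascastro} for the largest open cluster meeting the ball of radius $\bar c t$; the percolation configuration is static in $t$, so no time-interpolation ever arises. Your first-moment approach is more self-contained, and the key bound $\P(S\text{ all red at }t\mid T_{u_0})\le e^{-\gamma(t-T_{u_0})}(\gamma-1)^{-(m-1)}$ is indeed correct for $\gamma>1$ --- though a short recursion on the subtrees of $S$ delivers it more transparently than the edge-by-edge tilting you outline, whose parameters are not easy to pin down.

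However, your interpolation from $t\in\N$ to $t\in\R$ fails. You invoke ``$O(\#A_t)$ FPP events per unit time'', but on a supercritical tree $\#A_{n+1}-\#A_n$ is of exponential order $e^{(\m-1)n}$, so the bound $M_t\le M_n+(\#A_{n+1}-\#A_n)$ is useless against the target $\varepsilon t$. The fix is to make the first-moment estimate time-uniform: if $S$ is \emph{ever} all red then each non-leaf $v\in S$ satisfies $\mathbf{C}_v>T_{(v,c)}$ for every child $c$ of $v$ in $S$; choosing one child per non-leaf gives independent events of probability $(1+\gamma)^{-1}$, and since a size-$m$ subtree of $\mathbb{T}_\Delta$ has at least $(m-1)/\Delta$ non-leaves, $\P(S\text{ ever all red})\le(1+\gamma)^{-(m-1)/\Delta}$. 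Combined with your lattice-animal count and $\E[\#A_{n+1}]$ for the root location, this bounds $\P\bigl(\sup_{t\le n+1}M_t\ge\varepsilon n\bigr)$ directly and Borel--Cantelli finishes. Note that this repaired estimate is exactly the paper's percolation coupling seen through a union bound.
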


\subsection{Strategy of the proofs}
Below we outline the proof strategy behind Theorems \ref{thm_critical}, \ref{thm_supercritical} and Proposition \ref{prop:blue}.

\textbf{Idea of the proof of Theorem \ref{thm_critical}.}
The starting observation is the fact that at any time the longest red path is very likely to be at the boundary of the set of occupied vertices. 
It is therefore possible to find the distribution of such a cluster; obtaining the first statement as a consequence.
For the second part, by coupling the process with a suitable random walk on the non-negative integers, we show that almost surely there are arbitrary large times so that there is no red vertex in the graph.

\textbf{Idea of the proof of Theorem \ref{thm_supercritical}.}
As in the previous case, for every $t$, large red clusters tend to be close to the boundary of the occupied region $A_t$.
We show that if this boundary is large, then the probability that all clusters at time $t$ are ``too small'' vanishes quickly.
This will follow from self-similarity (in a distributional sense) of the Galton-Watson tree, together with the fact that if $G$ is infinite, then it grows exponentially fast.
The proof proceeds in steps that take care of $(H_t)_t$ and $(M_t)_t$ at the same time.
For all $t\geq 0$ let $Q_t$ be either $H_t$ or $M_t$, i.e., the size of either the longest red path or the largest red cluster at time $t$.

Fix $t\geq 0$ and consider a vertex $v$ on the \emph{external} boundary of $A_t$.
Clearly, $v$ has not yet been reached by the process, and the difference between its reaching time and $t$ is distributed as an Exp($1$) random variable.
This fact leads us to consider the auxiliary quantity $\P(Q_{1-T}\leq m)$, for all $m\geq 1$, where $T$ is an independent Exp($1$) random variable.
Subsequently, we show that for all $t\geq 1$ it is possible to compare $\P(Q_t\leq m)$ with $\P(Q_{1-T}\leq m)$.
Finally, we show that if $\P(Q_{1-T}\leq m)$ decreases fast enough in $m$, then the liminf has the sought expression; this is the most delicate part of the proof.
We deduce the result by letting $m$ be a suitable function of $t$.

\textbf{Idea of the proof of Proposition \ref{prop:blue}.}
An asymptotic analysis of the growth of the cluster implies the first part of the statement.
The second part follows from a direct calculation that gives an exponential upper bound on the probability that $A_t$ becomes atypically large as $t$ grows.
The last part follows from a coupling with a suitable Bernoulli percolation process and then applying a result of \cite{Ariascastro}.
More precisely, if $\gamma$ is large enough then the corresponding  Bernoulli percolation has parameter $p$ so small, that the cluster of open vertices (starting at the root) up to generation $\ell$ has volume of order at most $\ell$, when $\ell $ goes to infinity.

%

\paragraph{Structure of the paper.}
In Section \ref{sect:def_process} we introduce the main notation and give a formal definition of the process; then we review some related work, in order to put the proposed model into context.
Then we naturally split the remaining work into three parts: Sections \ref{sect:line}, \ref{sect:GWtree}, and \ref{section:blue}, devoted to the proofs of Theorem \ref{thm_critical}, Theorem \ref{thm_supercritical} (and Corollary \ref{coro_lb_gw_at}) and Proposition \ref{prop:blue}, respectively.

\section{Definition of the process}\label{sect:def_process}
In this note we focus on \emph{FPP with recovery}, which we formally define as follows.
On a graph $G=(V,E)$ fix a reference vertex $o\in V$ that we shall refer to as \emph{the origin} (or \emph{the root}, when $G$ is a tree).
At time $0$ we place a \emph{red} particle at $o$ and a colorless \emph{seed} on all other vertices $x\neq o$.
On every edge $(u,v)\in E$ we place an exponential random variable of rate $1$ (independent of everything else) denoted by $T_{(u,v)}$, whereas on all vertices we place independent random variables $\{\mathbf{C}_x \}_{x\in V}$ distributed as exponentials of rate $\gamma>0$, independent of the rest of the process.
Later on we shall refer to $T_{(u,v)}$ as the \emph{passage time} between $u$ and $v$, and to $\mathbf{C}_x$ as the \emph{recovery time} (or \emph{recovery clock}) of vertex $x$.
As soon as the seed at a vertex $v$ is active (hence $v$ is red), it will take some random time before $v$ recovers and turns black; such time is exactly $\mathbf{C}_v$.
Note that once a vertex turns black, then it will never turn red again.

\subsection{Notation}

We start by setting some notation, which is inherited by classical FPP; for a recent overview on FPP see e.g.\ \cite{Auffinger}.


For all $v\in V$ we define $\tau_v$ to be the \emph{reaching time} of $v$ from the origin, i.e.
\begin{equation}\label{eq:def-reachingtime}
\tau_v\coloneqq  
\inf_{\pi}
\ \sum_{j=1}^{|\pi|-1} T_{(\pi_i, \pi_{i+1})}
\end{equation}
where the infimum is taken over all connected paths $\pi=(o=\pi_1, \pi_2, ..., \pi_{|\pi|-1}, \pi_{|\pi|}=v)$ joining $o$ to $v$, with the convention $\inf\emptyset \coloneqq +\infty$.
For all $t\in\R$, define $A_t$ to be the set of all vertices that have been reached by the process by time $t$, i.e.,
\[
A_t\coloneqq \{v\in V \ : \ \tau_v\leq t\}.
\]
Recall that $(\mathbf{C}_v)_{v\in V}$ are i.i.d.\ random variables distributed as $\mathrm{Exp}(\gamma)$, representing the recovery clock of each vertex.
Then, for $t\geq 0$ let 
\begin{equation}\label{eq:def-R_t}
R_t\coloneqq \{v\in A_t \ : \ \tau_v\leq t<\tau_v+\mathbf{C}_v\},
\end{equation}
i.e., the set of all vertices that are red at time $t$.

Finally, recall that for all $t \geq 0$ we define $H_t$ to be the \emph{size (i.e.\ number of vertices) of the longest (oriented) red path} present at time $t$  and $M_t$ to be the \emph{size of the largest red cluster} present at time $t$.
A graphical representation is given in Figure \ref{fig:model-scheme}.

Throughout, we let $\N$ be the set of non-negative integers, while we set $\N^*\coloneqq \N\setminus\{0\}$.
Moreover, for any pair of non-negative integers $a,b$ with $a\leq b$ we denote $\range{a}{b}\coloneqq \N\cap[a, b]$.
Intuitively, $\range{a}{b}$ denotes the (ordered) set of integers between $a$ and $b$.
For every set of vertices $X\subset V$ we let $\# X$ denote its cardinality. 
Finally, whenever for some functions $f,g$ we write that for $t$ large we have $f(t)\sim g(t)$, we mean that $\frac{f(t)}{g(t)}\to 1$, as $t\to+\infty$.

\begin{figure}[h!]
    \centering
    \includegraphics[height=4.5cm]{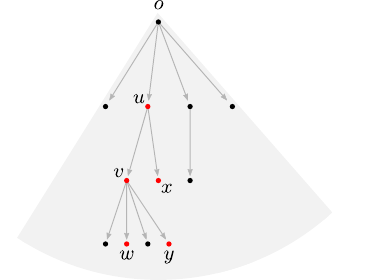}
\caption{A possible configuration for $A_t$ when $G$ is a Galton-Watson tree. 
    Vertices of $A_t$ are represented by dots: red if they belong to $R_t$ and black otherwise. 
    Here $H_t=3$, given by the red paths $(u, v, w)$ and $(u, v, y)$, whereas $M_t=5$, given by the red cluster of 
    vertices $\{u, v, w, x, y\}$.}
    \label{fig:model-scheme}
\end{figure}

\subsection{Related processes}

One of the motivations to study FPP with recovery is its connection with other epidemic and competition processes, such as the ones outlined below, including First Passage Percolation in Hostile Environment and the SIR process.

The so-called \emph{First Passage Percolation in Hostile Environment} (FPPHE) was introduced by Sidoravicius and Stauffer \cite{VA};
%
we briefly outline its definition and refer the interested reader to the references mentioned below.
On a graph $G$ place a \emph{black} particle at a reference vertex $o$, and on each vertex $x\neq o$ independently place a \emph{red particle} (called ``seed" in that context) with probability $\mu$, for some fixed $\mu \in (0,1)$.
%
The black particle initiates a FPP with rate $1$, while seeds remain inactive for as long as their hosting vertex has not been reached by the process.
When this occurs, the seed is \emph{activated}, turning red and starting a \emph{red} first-passage percolation with some rate $\lambda>0$.
FPPHE can be interpreted as a model for the spread of a disease (black), inhibited by the presence of a cure (red).
FPP with recovery can be thought of a natural modification of FPPHE. 
More precisely, FPP corresponds to setting $\lambda=1$ and $\mu=1$, as in our process.
(It would be interesting to see how different values of $\lambda$ and $\mu$ might affect the asymptotic behavior of the largest size of a red cluster in the modified version, but this is left for further investigation.)
However, in FPPHE vertices do not recover: whenever a vertex turns black (or red), then it will maintain its color forever.

For a thorough analysis of FPPHE on $\Z^d$ (for $d\geq 2$) we refer the interested reader to \cite{VA, Finn-PhD, finn2022nonequilibrium-coexistence}, whereas for questions related to hyperbolic and non-amenable graphs we point out \cite{candellero2021coexistence, candellero2021first}.

Another model that is related to the present setting was introduced in \cite{finn2022coexistence-conversion}, and consists of a first passage percolation (FPP) process with conversion rates.
In particular, a FPP process starts at a reference vertex spreading at rate 1 through vacant vertices. 
(Say that this process is of type 1.)
Each occupied vertex, independently, switches at rate $\rho>0$ to a second type of FPP (type 2) that spreads at rate $\lambda>0$ through vacant sites as well as sites occupied by type 1.
In this model, type $j\in \{1,2\}$ survives when there are vertices of type $j$ at all times.
\cite{finn2022coexistence-conversion} shows that when the underlying graph is a regular tree then coexistence is possible, while if the graph is a lattice then type 1 always dies if $\lambda$ is larger than some (small) critical value; the conjecture is that for $\lambda$ sufficiently small, type 1 survives.
One of the main differences between such a model and FPP with recovery is the fact in the former, the type-2 process can occupy vertices already reached by type 1, whereas in the present setting red FPP can only spread through vertices that have not yet been reached by the process.

Another known model related to the present framework is the so-called \emph{contact process} (cf.\ for example \cite{Liggett-StochasticInteractingSystems}
for a formal description).
In the contact process, a vertex hosting a particle is said to be \emph{infected} (here this would correspond to a \emph{red} vertex).
Infected vertices can pass on the infection to their neighbors at some constant rate $\lambda>0$, each infected vertex recovers at rate $1$ and it can become infected again. 
FPP with recovery is, however, very different from the contact process, as a vertex goes from red (infected) to black (recovered), and it will stay so forever.

The latter feature reminds of another related model, the so-called SIR model, where vertices on the graph are ``Susceptible'' or ``Infected'' or ``Recovered''. 
In the SIR model each infected vertex spreads the infection to its susceptible neighbors at a constant rate.
When a vertex recovers, it becomes immune to infection and it will be unable to spread the infection any further.
We refer to \cite{Hethcote-InfectiousDeseases} for a review on SIR.
While \cite{Hethcote-InfectiousDeseases} mainly focuses on the spread of SIR in finite graphs of large size $N$, there are also results in the case of infinite trees, for example see \cite{Besse2021SpreadingPF}.
The classical SIR model starts with a positive density of infected vertices, whereas in the present context, the infection starts from a \emph{unique} vertex (the origin) and all other vertices are seen as ``susceptible''.
Furthermore, FPP with recovery is so that a vertex can be infected by the red process even if the vertex trying to pass the infection has already recovered.
This 
represents an important difference between the two models.

\section{Analysis on the semi-line: a proof of Theorem \ref{thm_critical}}\label{sect:line}

In this section the graph $G$ is the infinite semi-line, i.e., $V=\N^\ast$;
%
we set the origin $o\coloneqq \{1\}$ and the set of edges $E\coloneqq \{(n, n+1),~n\geq1\}$. 
To simplify the notation, for all $n\geq 2$ set 
\[
T_n\coloneqq T_{(n-1, n)}.
\]
Since a cluster in $G$ is a path, then for all $t\geq0$ we have $M_t=H_t$. 
The recovery rate $\gamma>0$ is arbitrary but fixed. 
Recall that for any set $X$ we denote its cardinality by $\# X$.

\subsection{Analysis of the limsup}\label{subsect:line-limsup}

The goal of this subsection is to find a non-decreasing function $h:\R_{\geq 0}\rightarrow\R_{>0}$ such that
\[
\P \left ( \limsup_{t\rightarrow+\infty}\frac{H_t}{h(t)}=1\right ) =1.
\]
We start with a lemma that will be useful several times.

\begin{Lemma}
\label{lemma_fpp}
We have that
\[
\P \left ( \lim_{n\rightarrow+\infty} \frac{\tau_n}{n}= 1\right )=1 \quad \text{ and } \quad 
\P \left ( \lim_{t\rightarrow+\infty} \frac{\#A_t}{t}= 1 \right )=1.
\]
\end{Lemma}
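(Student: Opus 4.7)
The lemma is essentially the one-dimensional renewal theorem, so no genuinely new idea is needed; I would just assemble the standard argument.

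\textbf{First statement.} On the semi-line there is a unique self-avoiding path from the origin $1$ to any vertex $n$, so from \eqref{eq:def-reachingtime} one immediately has
\[
\tau_n = \sum_{i=2}^{n} T_i,
\]
a sum of $n-1$ i.i.d.\ $\mathrm{Exp}(1)$ variables of mean $1$. The Strong Law of Large Numbers then gives $\tau_n/(n-1)\to 1$ almost surely, hence $\tau_n/n\to 1$ almost surely. I would devote at most two lines to this.

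\textbf{Second statement.} I would deduce this from the first by the usual inversion trick from renewal theory. Observe that
\[
\#A_t \;=\; \max\{n\geq 1 \,:\, \tau_n\leq t\},
\]
which is well-defined and almost surely finite for each $t\geq 0$ because $\tau_n\uparrow +\infty$ a.s. (it is an unbounded sum of strictly positive terms). The definition yields the sandwich
\[
\tau_{\#A_t} \;\leq\; t \;<\; \tau_{\#A_t+1}.
\]
On the probability-one event where $\tau_n/n\to 1$ I would also note that $\#A_t\to +\infty$ as $t\to +\infty$ (otherwise $\tau_n$ would be bounded for some subsequence, contradicting $\tau_n\to+\infty$). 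Dividing the sandwich by $\#A_t$ and letting $t\to+\infty$, both $\tau_{\#A_t}/\#A_t$ and $\tau_{\#A_t+1}/\#A_t = \tau_{\#A_t+1}/(\#A_t+1)\cdot (\#A_t+1)/\#A_t$ converge to $1$ by the first part. Therefore $t/\#A_t\to 1$, equivalently $\#A_t/t\to 1$, almost surely.

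\textbf{Expected obstacles.} There are essentially none: the proof is a two-step textbook argument. The only pieces to state carefully are that $\tau_n\to\infty$ a.s. (needed both to make $\#A_t$ finite and $\#A_t\to\infty$) and that the sandwich holds with $\#A_t+1$ replacing $\#A_t$ in the denominator costing only a factor tending to $1$. Neither is a real difficulty.
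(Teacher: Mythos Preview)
Your proposal is correct and follows essentially the same approach as the paper: write $\tau_n$ as an i.i.d.\ sum and apply the SLLN, then invert via the sandwich $\tau_{\#A_t}\leq t<\tau_{\#A_t+1}$ together with $\#A_t\to\infty$. The paper's proof is virtually identical, only phrasing the squeeze as $\tau_{\#A_t+1}/\tau_{\#A_t}\to 1$ rather than your $(\#A_t+1)/\#A_t\to 1$, which is an immaterial difference.
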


\begin{proof}
By definition of $\tau_n$ (cf.\ \eqref{eq:def-reachingtime}) and since $G$ is the semi-line, then
$
\tau_n=\sum_{k=2}^n T_k
$.
Therefore, by the (strong) law of large numbers, we have that $\frac{\tau_n}{n}\to 1$ almost surely.

Next, observe that for all $t\geq\tau_n$ we have $\#A_t\geq n$.
Therefore, when $t\to \infty$ one has $\#A_t\rightarrow \infty$ almost surely (since for all $n$, $\tau_n<\infty$ almost surely).
Thus, $\frac{\tau_{\#A_t}}{\#A_t} \to 1 $ and $\frac{\tau_{\#A_t+1}}{\tau_{\#A_t}}\to 1$ almost surely.
Since for all $t\geq 0$ one has $\tau_{\#A_t}\leq t < \tau_{\#A_t+1}$, then, $\frac{\#A_t}{t}\to 1$ as claimed.
\end{proof}

Next, we introduce an auxiliary object, which we call the \emph{tail red cluster}.
Recall that $\range{a}{b}$ denotes the connected path going from vertex $a$ to vertex $b$ in $G$.

\begin{Definition} 
\label{def_tail_clusters}
For $n\geq 1$ let $\mathbf{T}_n$ denote the largest red cluster present at time $\tau_n$ \emph{which includes vertex} $n$; we call $\mathbf{T}_n$ the \emph{tail red cluster} at time $\tau_n$.
Moreover, for any $n\geq 1$ let $\widehat{H}_n$ be its size, i.e.,
\[
\widehat{H}_n\coloneqq \# \mathbf{T}_n = \max\{m\in\range{1}{n}:\range{n-m+1}{n}\subset R_{\tau_n}\}.
\]
\end{Definition}

The following result shows that, typically, the largest red cluster is the tail red cluster.

\begin{Proposition}
\label{prop_tail_cluster}
Let $h:\N^*\rightarrow\R_{>0}$ be a non-decreasing function, then:
\begin{equation}\label{eq:equality-tail-cluster}
\limsup_{n\rightarrow+\infty} \frac{\widehat{H}_n}{h(n)}=\limsup_{t\rightarrow +\infty} \frac{H_t}{h(\#A_t)}, \quad\P\text{-almost surely.}
\end{equation}
\end{Proposition}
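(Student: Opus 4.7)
The plan is to pass through the intermediate quantity $H_{\tau_n}$ and exploit the fact that on the semi-line new vertices are activated one at a time.

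First, on each interval $[\tau_n,\tau_{n+1})$ the set $A_t$ is constant equal to $\{1,\dots,n\}$, so $h(\#A_t)=h(n)$, while $H_t$ is non-increasing because no new seed is activated and only recovery clocks may ring. Hence the supremum of $H_t/h(\#A_t)$ on $[\tau_n,\tau_{n+1})$ is attained at $t=\tau_n$ and equals $H_{\tau_n}/h(n)$. Taking limsup over $n$ yields
\[
\limsup_{t\to+\infty}\frac{H_t}{h(\#A_t)}=\limsup_{n\to+\infty}\frac{H_{\tau_n}}{h(n)}.
\]

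It then suffices to prove that $\limsup_n \widehat H_n/h(n)=\limsup_n H_{\tau_n}/h(n)$. The inequality $\widehat H_n\leq H_{\tau_n}$ is immediate and provides one direction. For the reverse direction I would use the following ``time-translation'' of clusters: pick a maximal red cluster $\{a_n,\dots,b_n\}$ realizing $H_{\tau_n}$ at time $\tau_n$, so $b_n\leq n$. Since each of $a_n,\dots,b_n$ is red at $\tau_n$ and was activated before $\tau_{b_n}$, each is already red at $\tau_{b_n}$. At that instant vertex $b_n$ has just been activated, and the whole block $\{a_n,\dots,b_n\}$ is red and sits at the tip of $A_{\tau_{b_n}}=\{1,\dots,b_n\}$; thus it is contained in the tail cluster $\mathbf T_{b_n}$, yielding $\widehat H_{b_n}\geq H_{\tau_n}$. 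Since $b_n\leq n$ and $h$ is non-decreasing, this gives $\widehat H_{b_n}/h(b_n)\geq H_{\tau_n}/h(n)$.

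To turn this pointwise bound into the sought limsup inequality, I would extract a subsequence $(n_k)$ realizing $L:=\limsup_n H_{\tau_n}/h(n)$. In the non-trivial case $L>0$, the witnesses satisfy $b_{n_k}\geq H_{\tau_{n_k}}\to\infty$ (because $h(n_k)\to\infty$ in the regime of interest), and so $\widehat H_{b_{n_k}}/h(b_{n_k})$ admits a further subsequential limit at least $L$, proving $\limsup_m \widehat H_m/h(m)\geq L$. The main delicate point is exactly this subsequence extraction, i.e.\ ensuring that the witnesses $b_{n_k}$ escape to infinity: it is automatic when $h$ is unbounded (the regime in which the proposition is actually applied in Section \ref{subsect:line-limsup}), and would otherwise require a short separate case comparing $\sup$ with $\limsup$.
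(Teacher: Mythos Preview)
Your core argument is the same as the paper's: identify the rightmost vertex $b_n$ (the paper calls it $N$) of a cluster realizing $H$, and observe that $\widehat H_{b_n}\geq H_{\tau_n}$ together with $h(b_n)\leq h(n)$. The initial discretization step you add (reducing $\limsup_t H_t/h(\#A_t)$ to $\limsup_n H_{\tau_n}/h(n)$ via monotonicity of $H_t$ on $[\tau_n,\tau_{n+1})$) is a clean simplification the paper does not make explicit.

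The one real gap is the subsequence extraction. As you yourself note, the argument ``$b_{n_k}\geq H_{\tau_{n_k}}\to\infty$'' relies on $h$ being unbounded, which the proposition does not assume; and the vague suggestion of ``comparing $\sup$ with $\limsup$'' does not obviously close the bounded case. The paper handles this uniformly and more simply: fix any $n_0$, let $t_0:=\max_{i\leq n_0}(\tau_i+\mathbf C_i)$, and pick the witnessing time $\hat t\geq t_0$. Since all of $\range{1}{n_0}$ have already recovered by $t_0$, the rightmost vertex of any nonempty red cluster at $\hat t$ automatically satisfies $N\geq n_0$. This forces the witness index to be as large as desired without any hypothesis on $h$, and replaces your subsequence-plus-case-analysis by a single clean step.
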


\begin{proof}
We start by showing that,  almost surely,
$
\limsup_{n} \frac{\widehat{H}_n}{h(n)}\leq \limsup_{t} \frac{H_t}{h(\#A_t)}.
$

By construction, the size of the tail red cluster is at most the size of the largest red cluster, that is, $\widehat{H}_n\leq H_{\tau_n}$, and by definition $\#A_{\tau_n}=n$. 
Moreover $\tau_n\rightarrow \infty$ by Lemma \ref{lemma_fpp}, hence
\[
\limsup_{n\to+\infty} \frac{\widehat{H}_n}{h(n)} \leq \limsup_{n\to+\infty}\frac{H_{\tau_n}}{h(\#A_{\tau_n})}\leq \limsup_{t\to+\infty} \frac{H_t}{h(\#A_t)}.
\]
%
Now we proceed with the reverse inequality.
Recall that $\mathbf{C}_i$ denotes the recovery time of vertex $i$.
W.l.o.g.\ assume $\limsup_{t} \frac{H_t}{h(\#A_t)}>0$, (otherwise the result is clear), and fix $\ell \in \left (0, \, \limsup_{t} \frac{H_t}{h(\#A_t)} \right )$.
Fix an arbitrary value $n_0\geq 1$ and define
\[
t_0\coloneqq \max_{i\in\range{1}{n_0}}(\tau_i+\mathbf{C}_i),
\]
that is the time from which the first $n_0$ vertices are black.
Almost surely $t_0<+\infty$ and by definition of $\limsup$, there is a time $\hat{t}\geq t_0$ so that 
\begin{equation}\label{eq:Hhat}
\frac{H_{\hat{t}}}{h(\#A_{\hat{t}})}\geq \ell.
\end{equation}
Now let $N\in\range{1}{\#A_{\hat{t}}}$ denote the rightmost vertex of a red cluster with size $H_{\hat{t}}$.
If the cluster is not unique, just pick one arbitrarily. 
Clearly, since $\hat{t}\geq t_0$, then $N\geq n_0$ by definition of $t_0$.

We now claim that $\widehat{H}_N\geq H_{\hat{t}}$ and $\hat{t}\geq\tau_N$ (see Figure \ref{fig:proof_back_in_time}). 
To see this, we reason as follows.
Since $N\leq \# A_{\hat{t}}$, then by time $\hat{t}$ vertex $N$ must be already part of the aggregate, implying $\hat{t}\geq\tau_N$.
Moreover, between times $\tau_N$ and $\hat{t}$ there might have been some recoveries in the cluster of size $\widehat{H}_N$, implying  $\widehat{H}_N\geq H_{\hat{t}}$.
Using the facts that $N\geq n_0$ and that both functions $h$ and $t\mapsto \#A_t$ are non-decreasing, we obtain
\[
\widehat{H}_N  \stackrel{\eqref{eq:Hhat}}{\geq} \ell h(\#A_{\hat{t}}) \ \stackrel{\hat{t}\geq\tau_N }{ \geq} \ \ell h(\#A_{\tau_N})=\ell h(N).
\]
By putting everything together, we have that
\[
\limsup_{n\to+\infty} \frac{\widehat{H}_n}{h(n)}\geq \ell, \quad\P\text{-almost surely}.
\]
By letting $\ell \rightarrow \limsup_{t} \frac{H_t}{h(\#A_t)}$ we obtain the claim.
\end{proof}

\begin{figure}[h!]
    \centering
    \includegraphics[height=22mm]{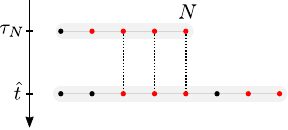}
    \caption{A representation of the reasoning behind the proof of Proposition \ref{prop_tail_cluster}; possible configuration for $A_{\hat{t}}$ and $A_{\tau_N}$ where $N$ is the rightmost vertex of a maximizing cluster for $H_{\hat{t}}$.}
    \label{fig:proof_back_in_time}
\end{figure}

\begin{Remark}
On the right-hand side of \eqref{eq:equality-tail-cluster}, the denominator is not a deterministic function of $t$. 
However, we will be able to get rid of the randomness with the help of Lemma \ref{lemma_fpp}. 
\end{Remark}

The next step is to find a non-decreasing function $h:\N^*\rightarrow\R_{>0}$ such that 
\begin{equation}\label{eq:equation-h}
\limsup_{n\rightarrow+\infty}\frac{\widehat{H}_n}{h(n)}=1,\quad\P\text{-almost surely}.
\end{equation}
In order to do this, we will compute the distribution of $\widehat{H}_n$ for all $n\geq1$.

\begin{Lemma} 
\label{lemma_tail_distrib}
Recall that $\gamma>0$ is the recovery rate of the red vertices.
For all $n\geq 1$ and $m\in\range{0}{n}$,
\[
\Proba(\widehat{H}_n\geq m)=\left[\prod_{k=1}^{m-1} (1 + k\gamma)\right]^{-1}.
\]
\end{Lemma}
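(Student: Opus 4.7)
The plan is to reduce the event $\{\widehat{H}_n\geq m\}$ to an explicit condition on the passage times $T_j$ and the recovery clocks $\mathbf{C}_k$, then compute the probability by conditioning on the $T_j$'s and using independence.

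First I would unpack the definition on the semi-line: $\tau_k=\sum_{j=2}^{k}T_j$, and vertex $k$ belongs to $R_{\tau_n}$ iff its recovery clock has not yet rung at time $\tau_n$, i.e.
\[
\mathbf{C}_k>\tau_n-\tau_k=\sum_{j=k+1}^{n}T_j.
\]
Since vertex $n$ is a.s.\ red at time $\tau_n$ (contributing nothing), the event $\{\widehat{H}_n\geq m\}$ is equivalent to requiring this inequality simultaneously for every $k\in\range{n-m+1}{n-1}$.

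Next I would condition on $(T_{n-m+2},\ldots,T_n)$. For fixed passage times, the $m-1$ events $\{\mathbf{C}_k>\tau_n-\tau_k\}$ involve distinct clocks $\mathbf{C}_k\sim\mathrm{Exp}(\gamma)$, independent of each other and of the $T_j$'s, so they are conditionally independent with conditional probabilities $e^{-\gamma(\tau_n-\tau_k)}$. Taking expectation and interchanging the order of summation in the exponent yields
\[
\P(\widehat{H}_n\geq m)=\E\!\left[\exp\!\left(-\gamma\sum_{j=n-m+2}^{n}(j-n+m-1)\,T_j\right)\right],
\]
where the coefficient of $T_j$ counts the number of indices $k\in\range{n-m+1}{j-1}$, namely $j-n+m-1$.

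Finally, reindexing $i=j-n+m-1\in\range{1}{m-1}$ and using the independence of the $T_j$'s, the expectation factorizes into a product of Laplace transforms of $\mathrm{Exp}(1)$ variables. Applying the standard identity $\E[e^{-\lambda T}]=(1+\lambda)^{-1}$ with $\lambda=i\gamma$ gives exactly
\[
\P(\widehat{H}_n\geq m)=\prod_{i=1}^{m-1}\frac{1}{1+i\gamma}.
\]
No step is particularly delicate; the only bookkeeping to check is that the multiplicity of $T_j$ in the double sum is precisely $i$ after reindexing, which is what produces the factor $(1+i\gamma)$ matching the statement.
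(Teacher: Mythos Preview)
Your proof is correct and follows essentially the same route as the paper: both rewrite $\{\widehat H_n\ge m\}$ as an intersection of events $\{\mathbf C_k>\tau_n-\tau_k\}$, condition on the passage times to exploit independence of the clocks, collect the coefficients of the $T_j$'s in the resulting exponent, and then factorize via the Laplace transform of $\mathrm{Exp}(1)$. The only cosmetic difference is that you drop the trivial term $k=n$ at the outset, whereas the paper keeps it (with an empty sum) and absorbs it later.
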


\begin{proof}
Observe that 
\[
\{\widehat{H}_n\geq m\}=\bigcap_{j=n-m+1}^n\left\lbrace \mathbf{C}_j>\sum_{k=j+1}^n T_k\right\rbrace .
\]
Now we exploit independence of the recovery clocks $(\mathbf{C}_k)_{k\geq1}$ as well as independence between $(\mathbf{C}_k)_{k\geq1}$ and the passage times.
This gives 
\[
\begin{split}
\Proba(\widehat{H}_n\geq m) & =\E \left [\Proba(\widehat{H}_n\geq m~|~(T_k)_{k\geq1}) \right ] = \E\left[\prod_{j=n-m+1}^n\exp\left(-\gamma\sum_{k=j+1}^m T_k\right)\right]\\
 &=\E\left[\exp\left(-\gamma\sum_{k=1}^{m-1} kT_{n-m+1+k}\right)\right] = \prod_{k=1}^{m-1}\int_0^{+\infty}e^{-k\gamma x}e^{-x}\mathrm{d}x = \left[\prod_{k=1}^{m-1} (1+k\gamma)\right]^{-1},
\end{split}
\]
as claimed.
\end{proof}

It follows immediately from Lemma \ref{lemma_tail_distrib} that the sequence of random variables $(\widehat H_n)_{n\in\N^*}$ converges in distribution.

To simplify the notation, for all $m\geq 0$ set
\[
\Pi(m)\coloneqq \left[\prod_{k=1}^{m-1} (1 + k\gamma)\right]^{-1}.
\]
%
%
In the following, $\Gamma$ denotes the usual \emph{Gamma function}.
By its fundamental properties, for all $m\geq 1$ we have 
\[
\Pi(m)=\gamma^{-(m-1)}\frac{\Gamma(\gamma^{-1})}{\Gamma(m+\gamma^{-1})}.
\]
Using Stirling's approximation it follows that
\begin{equation}
    \label{eq_Pi_asympt}
    \log\Pi(m)\sim-m\log m\quad\text{as $m\rightarrow+\infty$}.
\end{equation}
We now show that $x\mapsto\frac{\log x}{\log\log x}$ is a good candidate for the function $h$ in \eqref{eq:equation-h}.

\begin{Theorem}
\label{thm_limsup_tail}
Almost surely, $\limsup_{n\rightarrow+\infty} \frac{\widehat{H}_n\log\log n}{\log n}=1$.
\end{Theorem}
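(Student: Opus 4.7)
The plan is to prove both inequalities via the Borel--Cantelli lemmas, using the exact distribution of $\widehat{H}_n$ from Lemma \ref{lemma_tail_distrib} together with the asymptotic \eqref{eq_Pi_asympt}. For any $\varepsilon\in(0,1)$, set
\[
m_n^{\pm}\coloneqq \left\lfloor (1\pm\varepsilon)\,\frac{\log n}{\log\log n}\right\rfloor.
\]
A direct substitution into \eqref{eq_Pi_asympt} gives $\log\Pi(m_n^{\pm})\sim -(1\pm\varepsilon)\log n$, i.e.\ $\Pi(m_n^{\pm})=n^{-(1\pm\varepsilon)+o(1)}$, using that $\log m_n^{\pm}\sim\log\log n$.

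For the upper bound, since $\sum_n n^{-(1+\varepsilon/2)}<\infty$, the first Borel--Cantelli lemma applied to the events $\{\widehat{H}_n\geq m_n^{+}\}$ yields that, almost surely, $\widehat{H}_n<m_n^{+}$ for all $n$ large enough. Letting $\varepsilon\downarrow 0$ along a countable sequence gives $\limsup_{n}\frac{\widehat{H}_n\log\log n}{\log n}\leq 1$ almost surely.

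For the lower bound, the main obstacle is that the events $\{\widehat{H}_n\geq m\}$ for different $n$ are \emph{not} independent: by the expression
\[
\{\widehat{H}_n\geq m\}=\bigcap_{j=n-m+1}^{n}\left\{\mathbf{C}_j>\sum_{k=j+1}^{n}T_k\right\},
\]
this event depends on the recovery clocks $\mathbf{C}_{n-m+1},\ldots,\mathbf{C}_n$ and the passage times $T_{n-m+2},\ldots,T_n$. To circumvent this, I would pass to a sparse deterministic subsequence. Choose $\alpha\in\bigl(1,\,\tfrac{1}{1-\varepsilon}\bigr)$ and set $n_k\coloneqq \lfloor k^{\alpha}\rfloor$. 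Since $n_k-n_{k-1}\sim\alpha k^{\alpha-1}$ grows polynomially while $m_{n_k}^{-}\sim(1-\varepsilon)\frac{\alpha\log k}{\log(\alpha\log k)}$ grows only logarithmically, the blocks $\range{n_k-m_{n_k}^{-}+1}{n_k}$ are pairwise disjoint for all sufficiently large $k$. Consequently, the events $\{\widehat{H}_{n_k}\geq m_{n_k}^{-}\}$ involve disjoint sets of $\mathbf{C}_j$'s and $T_k$'s and are (eventually) mutually independent.

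Now $\Pi(m_{n_k}^{-})=n_k^{-(1-\varepsilon)+o(1)}=k^{-\alpha(1-\varepsilon)+o(1)}$, and the choice $\alpha(1-\varepsilon)<1$ makes $\sum_k\Pi(m_{n_k}^{-})=+\infty$. The second Borel--Cantelli lemma then gives $\widehat{H}_{n_k}\geq m_{n_k}^{-}$ for infinitely many $k$ almost surely, hence $\limsup_{n}\frac{\widehat{H}_n\log\log n}{\log n}\geq 1-\varepsilon$ almost surely. Letting $\varepsilon\downarrow 0$ along a countable sequence finishes the proof. The delicate point is balancing the choice of $\alpha$ so that the subsequence is simultaneously (i) sparse enough for independence and (ii) dense enough for divergence of the probability sum; the interval $1<\alpha<1/(1-\varepsilon)$ does both.
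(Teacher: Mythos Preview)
Your proposal is correct and follows essentially the same route as the paper: both directions use Borel--Cantelli together with the asymptotic $\log\Pi(m)\sim -m\log m$, and for the lower bound you pass to a polynomially sparse subsequence $n_k=\lfloor k^{\alpha}\rfloor$ (the paper writes $k_n=\lfloor n^s\rfloor$) so that the relevant blocks of $\mathbf{C}_j$'s and $T_k$'s become disjoint and independence is available. The only differences are cosmetic --- your parameters $(1\pm\varepsilon,\alpha)$ play the role of the paper's $(r,s)$, and you use floors where the paper uses ceilings --- none of which affects the argument.
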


\begin{proof}
We start by showing that for all $r>1$ we have $\Proba \left [\limsup_{n} \frac{\widehat{H}_n\log\log n}{\log n}>r \right ]=0$.
Fix any $r>1$, and for all $n\geq \lceil e^e\rceil$ define 
\[
m_n\coloneqq \left \lceil r\frac{\log n}{\log\log n}\right \rceil.
\] 
By Lemma \ref{lemma_tail_distrib} we have
\begin{equation}
    \label{eq_bc_series}
    \sum_{n\geq\lceil e^e\rceil} \Proba\left(\widehat{H}_n\geq r\frac{\log n}{\log\log n}\right)
    =\sum_{n\geq \lceil e^e\rceil}\Pi(m_n)\mathbf{1}_{m_n\leq n}\leq\sum_{n\geq \lceil e^e\rceil}\Pi(m_n).
\end{equation}
Now, observe that by (\ref{eq_Pi_asympt}), as $n\rightarrow+\infty$ we have
\[
\log\Pi(m_n)\sim-m_n\log m_n\sim-r\log n.
\]
As $r>1$, this implies that the series in (\ref{eq_bc_series}) converges. 
Then by the first Borel-Cantelli lemma, almost surely only finitely many events of the sequence $(\{\widehat{H}_n\geq r\frac{\log n}{\log\log n}\})_{n\geq\lceil e^e\rceil}$ can be realized.
Since $r>1$ was arbitrary,
\[
\P \left ( \limsup_{n\to+\infty} \frac{\widehat{H}_n\log\log n}{\log n}\leq 1 \right ) =1.
\] 
Now we show that $\Proba \left [\limsup_{n} \frac{\widehat{H}_n\log\log n}{\log n}\geq r \right ]=1$, for all $r<1$.
Let $r\in(0,1)$ and fix $s\in(1,\frac{1}{r})$. 
Similarly to what we did in the first part, for all $n\geq \lceil e^e\rceil+1$ define $k_n\coloneqq \lfloor n^s\rfloor$ and $w_n\coloneqq \left \lceil r\frac{\log k_n}{\log\log k_n}\right \rceil$. By definition,
\[
\frac{w_{n+1}}{k_{n+1}-k_n}\sim\frac{r\log n}{n^{s-1}\log\log n}\rightarrow0\quad\text{as}\quad n\rightarrow+\infty.
\]
For all large enough $n$, we also have $w_n\leq k_n$, hence there is $n_0\geq\lceil e^e\rceil+1$ so that for all $n\geq n_0$ we have 
\[
w_n\leq k_n \quad \text{ and }\quad k_{n+1}-w_{n+1}>k_n. 
\]
By construction this implies that the events $(\{\widehat{H}_{k_n}\geq w_n\})_{n\geq n_0}$ are independent. Analogously to what done above, by lemma \ref{lemma_tail_distrib},
\begin{equation}
    \label{eq_bc_series2}
    \sum_{n\geq n_0}\Proba\left(\widehat{H}_{k_n}\geq r\frac{\log k_n}{\log\log k_n}\right)=\sum_{n\geq n_0} \Pi(w_n)\mathbf{1}_{w_n\leq k_n}=\sum_{n\geq n_0} \Pi(w_n).
\end{equation}
Finally, by reasoning as above, as $n\rightarrow+\infty$,
\[
\log\Pi(w_n)\sim-w_n\log w_n\sim-rs\log n.
\]
Since $rs<1$, the series in \eqref{eq_bc_series2} diverges.
By the second Borel-Cantelli lemma, almost surely there is an infinite sub-sequence of events in $(\{\widehat{H}_{k_n}\geq w_n\})_{n\geq n_0}$ that will be realized, giving that
$\Proba \left [\limsup_{n} \frac{\widehat{H}_n\log\log n}{\log n}\geq r \right ]=1$. 
Since $r\in(0,1)$ was arbitrary, the claim follows.
\end{proof}

Finally we are ready to prove part (i) of Theorem \ref{thm_critical}.

\begin{proof}[Proof of Theorem \ref{thm_critical}(i)]
It suffices to show that, almost surely, 
\[
\limsup_{t\rightarrow+\infty}\frac{H_t\log\log t}{\log t}=\limsup_{t\rightarrow+\infty}\frac{H_t\log\log(\#A_t)}{\log(\#A_t)}=1.
\]
The second equality is given by Theorem \ref{thm_limsup_tail} and Proposition \ref{prop_tail_cluster} since $(e^e,+\infty)\ni x\rightarrow \frac{\log x}{\log\log x}$ is increasing. 
Finally, since $\#A_t\sim t$ almost surely by Lemma \ref{lemma_fpp}, the result follows.
\end{proof}

\subsection{Analysis of the liminf}\label{subsect:line-liminf}

One way to show the sought result is the following.
(We are grateful to an anonymous referee for suggesting this short proof.)


First, define a sequence of stopping times $\{\sigma_n\}_{n\in\N}$ as follows:
\[
\sigma_0\coloneqq  0 \quad \text{and for }n\geq 0, \quad \sigma_{n+1}\coloneqq \inf \{t> \sigma_n \ : \ \#R_t\neq\#R_{\sigma_n}\}.
\]
This sequence corresponds to the jumping times of the process $(\#R_t)_{t\geq 0}$. For simplicity of notation, for all $n\geq 0$ let
\[
W_n\coloneqq \#R_{\sigma_n}.
\]
Using the fact that the exponential distribution is memoryless, we have that the process $\{W_n\}_{n\in\N}$ is a Markov chain, with
\[
\P(W_{n+1}-W_n=1~|~W_n)=1-\P(W_{n+1}-W_n=-1~|~W_n)=\frac{1}{1+\gamma W_n} \quad \text{for all $n\in\N^*$}.
\]
Now, observe that $\P(W_{n+1}-W_n=1~|~W_n)\leq 1/2$ iff $W_n\geq \left\lceil \gamma^{-1}\right \rceil$. Therefore, if we see this process as a random walk on $\N\setminus \range{0}{\left\lceil \gamma^{-1}\right \rceil}$ reflected at $\left\lceil \gamma^{-1}\right \rceil $, we see that its transition probabilities are dominated by those of the simple random walk (on $\N\setminus \range{0}{\left\lceil \gamma^{-1}\right \rceil}$ reflected at $\left\lceil \gamma^{-1}\right \rceil $), which is recurrent.

This argument shows that the Markov chain $(W_n)_{n\in\N^*}$ visits vertex $\left\lceil \gamma^{-1}\right \rceil $ infinitely often almost surely. Since $\P\left (W_{n+\left\lceil \gamma^{-1}\right \rceil}=0 \mid W_n=\left\lceil \gamma^{-1}\right \rceil \right )$ is positive and does not depend on $n\in\N$, we deduce that $(W_n)_{n\in\N^*}$ visits $0$ infinitely often almost surely, that is
\begin{equation}
    \label{eq_liminf_Rsigman}
    \liminf_{n\rightarrow+\infty}\#R_{\sigma_n}=0
\end{equation}
Now, fix $n\in\N^*$ and observe that at time $\tau_n$ the process $(\#R_t)_{t\geq 0}$ may have had at most $2n-2$ jumps: exactly $n-1$ positive jumps corresponding to new vertices being reached, and at most $n-1$ negatives jumps corresponding to recoveries; giving $\sigma_{2n-2}\geq\tau_n$. Since $\tau_n\rightarrow+\infty$ almost surely as $n\rightarrow+\infty$ by Lemma \ref{lemma_fpp}, we deduce that $\sigma_n\rightarrow+\infty$ almost surely as as $n\rightarrow+\infty$. Together with (\ref{eq_liminf_Rsigman}) this implies that almost surely there are arbitrary large time $t$ so that there is no red vertices at time $t$, giving the sought liminf.


\begin{Remark}
It is possible to show the result on the liminf also in a different way, which investigates the process directly.
Since we  believe it to be of independent interest, we present it in Appendix \ref{sect:appendix}.
\end{Remark}

\section{Supercritical Galton-Watson trees}\label{sect:GWtree}

In this section $G$ is a supercritical Galton-Watson tree and the recovery rate $\gamma>0$ is arbitrary. 
More precisely, let $\zeta$ denote a distribution on the non-negative integers and consider a random variable $B\sim \zeta$. 
For all $k\geq 0$, let $\zeta(k)\coloneqq \P(B=k)$.
We require that $\E B\coloneqq \sum_{k\geq 1}k\zeta (k) >1$. 
Then, we assume that $\zeta$ is the offspring distribution that characterizes $G$.
We let $\mathcal S$ denote the event that $G$ is infinite.
Furthermore, define
\[
\Proba_\mathcal S (\cdot)\coloneqq \Proba ( \cdot \mid \mathcal S). 
\]
To simplify the notation, for all $v\in  V\setminus\{o\}$ let $v^-$ denote the predecessor of $v$ in the (unique) non-backtracking path from the root $o$ to $v$ and set $T_{v}\coloneqq T_{(v^-, v)}$.
We also define 
\begin{equation}\label{eq:def-alpha}
\boldsymbol{\alpha}\coloneqq \E B-1.
\end{equation}
See Figure \ref{fig:simulation} for a simulation of FPPHE with recovery on $G$.

\begin{figure}[ht]
    \centering
    \includegraphics[height=5cm]{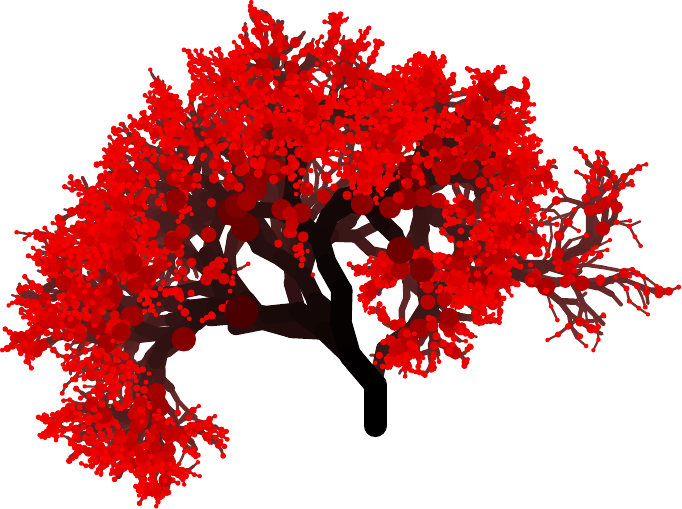}
    \caption{Simulation of $A_t$ with $B\sim\mathrm{Bin}(2, \frac{4}{5})$, $\gamma=\frac{1}{2}$ and $t=13$. 
    Red dots are vertices of $R_t$ and dark lines are edges inside $A_t$.}
    \label{fig:simulation}
\end{figure}

Our next aim is to find two non-decreasing functions $h, g:\R_{\geq 0}\rightarrow\R_{>0}$ so that $\P_\mathcal S$-almost surely,
\[
\liminf_{t\rightarrow+\infty}\frac{H_t}{h(t)}\geq1\quad\text{and}\quad\liminf_{t\rightarrow+\infty}\frac{M_t}{g(t)}\geq1.
\]

\subsection{Useful notation and properties}\label{subsect:GW-properties}

We start by defining two sequences of stopping times that will be widely used later on.

\begin{Definition}
\label{def-reachingtimes2}
For all $n\geq1$, set 
$\theta_n\coloneqq \inf \{t\geq0:\#A_t\geq n\}$.
Moreover, for all $v\in V$, let 
$\tau_v\coloneqq \inf \{t>0 \ : \ v\in A_t\}$.
\end{Definition}
Fix an integer $n\geq 1$ and assume $\#G\geq n$.
Then $\theta_n$ is the reaching time of some vertex, and $\#A_{\theta_n}=n$.
For any subset $A \subset V$ we define $\partial^* A$ to be its external boundary, i.e.,
\[
\partial^* A \coloneqq  \{w \in V\setminus A \ : \ \{v,w\}\in E , \text{ for some }v\in A\}.
\]
In this subsection we provide asymptotic estimates for $\theta_n$ and $\#\partial^*A_{\theta_n}$ as $n \to \infty$. 
The following lemma gives a characterization for the distribution of $(\#\partial^*A_{\theta_n})_{n\geq 1}$. 
Recall that the offspring distribution $\zeta$ is described at the beginning of Section \ref{sect:GWtree}.

\begin{Lemma}
\label{lemma_construction}
Consider a sequence $(D_i)_{i\geq 1}$ of i.i.d.\ random variables distributed according to the offspring distribution $\zeta$ and define a new sequence of random variables $(J_n)_{n\geq 0}$ inductively by setting
$J_0\coloneqq 1$, and for all $n\geq 0$  
\[
J_{n+1}\coloneqq J_n+(D_{n+1}-1)\mathbf{1}_{\{J_n>0\}}.
\]
Then, we have that $(\#\partial^*A_{\theta_n})_{n\geq 1}$ is distributed as $(J_n)_{n\geq 1}$.
\end{Lemma}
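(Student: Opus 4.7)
The plan is to reveal the Galton--Watson tree $G$ gradually along the FPP exploration, so that $(\#\partial^* A_{\theta_n})_{n \geq 1}$ becomes a Markov chain with exactly the transitions of $(J_n)_{n\geq 1}$. Note first that the boundary size does not involve the recovery clocks $(\mathbf{C}_v)_v$ at all, so we may forget about recoveries and work with pure FPP on $G$. The base case is immediate: $\theta_1=0$, $A_{\theta_1}=\{o\}$, and $\#\partial^*A_{\theta_1}$ equals the number of children of the root, which by definition of $G$ is $\zeta$-distributed and matches $J_1=1+(D_1-1)=D_1$.

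For the inductive step, fix $n\geq 1$ and condition on the exploration up to time $\theta_n$. If $\#\partial^*A_{\theta_n}=0$, then $G=A_{\theta_n}$ is finite and fully explored, so $\theta_m=+\infty$ and $\#\partial^*A_{\theta_m}=0$ for every $m>n$; this matches the indicator $\mathbf{1}_{\{J_n>0\}}$ which freezes $(J_n)$ at $0$. Otherwise let $k=\#\partial^*A_{\theta_n}$; each boundary vertex $w$ is attached to $A_{\theta_n}$ by a unique edge of passage time $T_{(v_w,w)}$, whose residual value at time $\theta_n$ is, by the memoryless property of the exponential distribution, $\mathrm{Exp}(1)$ and conditionally independent of the past. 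Hence $\theta_{n+1}-\theta_n$ is the minimum of $k$ i.i.d.\ $\mathrm{Exp}(1)$ random variables, and the vertex $U_{n+1}$ reached at time $\theta_{n+1}$ is uniform on $\partial^*A_{\theta_n}$. Because $G$ is a tree, the children of $U_{n+1}$ are disjoint from $A_{\theta_n}$ and from $\partial^*A_{\theta_n}\setminus\{U_{n+1}\}$, so
\[
\#\partial^*A_{\theta_{n+1}} \;=\; \#\partial^*A_{\theta_n} - 1 + D_{n+1},
\]
where $D_{n+1}$ denotes the offspring count of $U_{n+1}$ in $G$.

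It remains to verify that $(D_n)_{n\geq 1}$ is i.i.d.\ $\zeta$ and independent of the passage times; this is the main subtlety, and the clean way to handle it is via the Ulam--Harris--Neveu realization of $G$. Assign to every potential vertex $u$ of the infinite labelled tree an independent $\zeta$-distributed offspring count $\zeta_u$, independent of the family $(T_e)_e$, and let $G$ be the connected component of the root in the induced subgraph. Then $U_{n+1}$ is measurable with respect to the passage times and the previously revealed counts $\zeta_{U_1},\ldots,\zeta_{U_n}$ only, while $D_{n+1}=\zeta_{U_{n+1}}$ has never been inspected before; since $(\zeta_u)_u$ is i.i.d.\ and independent of the passage times, $D_{n+1}$ is $\zeta$-distributed and independent of the history up to $\theta_n$. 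Iterating yields the sought i.i.d.\ sequence and closes the induction. The main obstacle is precisely this independence step: although the identity of $U_{n+1}$ does depend on the previously revealed offspring counts via the shape of the boundary, the count attached to $U_{n+1}$ itself is unaffected because only unexplored counts contribute. Everything else reduces to the elementary FPP exploration of a tree via memorylessness.
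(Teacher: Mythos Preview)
Your proof is correct and follows essentially the same approach as the paper: both exploit that the sequential FPP exploration of a Galton--Watson tree reveals fresh i.i.d.\ offspring counts at each step, so that the boundary increments are $\zeta-1$ on the event the boundary is nonempty. The only cosmetic difference is that the paper conditions on all passage times up front (making the exploration order deterministic and reducing the argument to a single sentence), whereas you work step by step via memorylessness and make the independence of $D_{n+1}$ from the history explicit through the Ulam--Harris--Neveu construction; your version is more detailed but the underlying idea is identical.
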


\begin{proof}
Observe that conditionally on all the passage times, $(A_{\theta_n})_{n\geq 1}$ is a deterministic exploration process of $G$.
More precisely, we start at the origin and we consecutively discover, one at the time, vertices of the external boundary of the currently explored region. 
In particular, we have that for all $n\geq 1$,
\[
\#\partial^*A_{\theta_{n+1}}-\#\partial^*A_{\theta_n}
\]
is independent of $(\#\partial^*A_{\theta_k})_{1\leq k\leq n}$ and, on the event $\#\partial^*A_{\theta_n}>0$, has distribution $\zeta-1$. The result follows then by induction.
\end{proof}

\begin{figure}[h!]
    \centering
    \includegraphics[height=3cm]{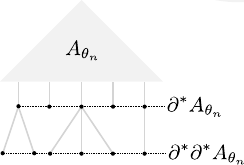}
    \caption{Illustration of the step-by step construction 
    in the proof of Lemma \ref{lemma_construction}.}
    \label{fig:step_by_step}
\end{figure}

\begin{Lemma}
\label{lemma_controlled_growth}
Recall the definition of $\boldsymbol{\alpha}$ from \eqref{eq:def-alpha}. 
Almost surely on $\mathcal S$, we have
\[
\frac{\theta_n}{\log n}\to \frac{1}{\boldsymbol{\alpha}} \quad \text{ and } \quad 
\frac{\#\partial^*A_{\theta_n}}{n}\to \boldsymbol{\alpha}.
\]
\end{Lemma}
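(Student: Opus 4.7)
My plan is to combine Lemma \ref{lemma_construction} with the memoryless property of the exponential passage times, reading off both limits from the asymptotics of the random walk $(J_n)$ and of an auxiliary sum of independent exponentials. For the boundary convergence, by Lemma \ref{lemma_construction} it suffices to show $J_n/n\to\boldsymbol{\alpha}$ almost surely on $\mathcal S$. The event $\mathcal S$ that $G$ is infinite coincides with $\{J_n>0\text{ for all }n\geq 0\}$: on this event every indicator in the defining recursion equals $1$, so $J_n=1+\sum_{i=1}^n(D_i-1)$ is an i.i.d.\ random walk with positive increment mean $\boldsymbol{\alpha}$, and the strong law of large numbers yields $J_n/n\to\boldsymbol{\alpha}$ almost surely on $\mathcal S$.

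For the convergence of $\theta_n$, I would exploit the memoryless property of the passage times. Conditionally on $A_{\theta_n}$, the $J_n$ boundary edges carry independent residual $\mathrm{Exp}(1)$ passage times, so the waiting time $X_n\coloneqq\theta_{n+1}-\theta_n$ satisfies $X_n\sim\mathrm{Exp}(J_n)$; moreover the standard competition-of-exponentials argument shows that the identity of the next added vertex is uniform on the boundary, independent of $X_n$. Writing $X_k=E_k/J_k$, the variables $(E_k)_{k\geq 1}$ are therefore i.i.d.\ $\mathrm{Exp}(1)$ and independent of the offspring sequence $(D_k)_{k\geq 1}$, in particular of $(J_k)_{k\geq 1}$. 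This yields, on $\mathcal S$,
\[
\theta_n \;=\; \sum_{k=1}^{n-1}\frac{E_k}{J_k}.
\]

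From the first step, $J_k\sim\boldsymbol{\alpha} k$ almost surely on $\mathcal S$, hence $\sum_{k=1}^{n-1}1/J_k\sim \log n/\boldsymbol{\alpha}$. To control the centered fluctuation $\sum_{k=1}^{n-1}(E_k-1)/J_k$, note that conditionally on $(J_k)$ it is a sum of independent mean-zero terms with summable variances $1/J_k^2\sim 1/(\boldsymbol{\alpha}^2 k^2)$; Kolmogorov's convergence theorem then gives a.s.\ convergence of the series, so the partial sums are $O(1)=o(\log n)$, and combining the two contributions yields $\theta_n/\log n\to 1/\boldsymbol{\alpha}$ a.s.\ on $\mathcal S$. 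The most delicate point is the clean verification that $(E_k)$ are i.i.d.\ $\mathrm{Exp}(1)$ and independent of $(J_k)$; this requires a filtration-based argument to disentangle the passage times from the offspring variables, but once set up the remainder is a routine SLLN-plus-Kolmogorov estimate.
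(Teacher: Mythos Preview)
Your treatment of $\#\partial^*A_{\theta_n}/n\to\boldsymbol{\alpha}$ is exactly the paper's argument: identify $\mathcal S$ with $\{J_n>0\text{ for all }n\}$, unwind the recursion to $J_n=1+\sum_{k=1}^n(D_k-1)$, and apply the strong law of large numbers.

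For $\theta_n/\log n\to 1/\boldsymbol{\alpha}$ the two approaches differ. The paper does not argue at all: it simply invokes the classical theory of Markov branching processes (Athreya--Karlin; Athreya--Ney, \S\S I.12 and III.9), recognising that $(\#A_t)_{t\geq 0}$ is precisely such a process and that $\log(\#A_t)/t\to\boldsymbol{\alpha}$ on survival is standard. You instead give a self-contained proof: use memorylessness to write $\theta_n=\sum_{k=1}^{n-1}E_k/J_k$ with $(E_k)$ i.i.d.\ $\mathrm{Exp}(1)$ independent of $(J_k)$, then split into the deterministic-looking part $\sum 1/J_k\sim(\log n)/\boldsymbol{\alpha}$ and an $O(1)$ martingale remainder handled by Kolmogorov's one-series theorem. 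This is correct and in fact reproduces, in this specific setting, the embedded-chain construction underlying the references the paper cites. Your version buys independence from the literature at the cost of having to justify carefully (as you note) that the pairs $(E_k,D_{k+1})$ are conditionally independent with product law $\mathrm{Exp}(1)\otimes\zeta$ given the history up to step $k$; the cleanest way to do this is via the filtration $\mathcal F_n=\sigma(A_{\theta_k},\theta_k:k\leq n)$, observing that conditionally on $\mathcal F_n$ the residual boundary clocks are i.i.d.\ $\mathrm{Exp}(1)$ and the offspring of the argmin vertex is a fresh $\zeta$-sample independent of the minimum value. Once that is in place your SLLN-plus-Kolmogorov computation goes through as written.
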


\begin{proof}
The first statement follows directly by applying the methods in \cite{AK} (which are also deeply discussed in \cite[Sections I.12 and III.9]{AthreyaNey}).
Now we can apply Lemma \ref{lemma_construction}.
Since the event $\{G$ is infinite$\}$ can be rewritten as 
$\{\forall n\geq 1,~\#\partial^*A_{\theta_n}>0\}$, 
by Lemma \ref{lemma_construction} it suffices to show that, conditional on $\mathcal S'\coloneqq \{\forall n\geq 1,\, ~J_n>0\}$, almost surely $\frac{J_n}{n}\to \boldsymbol{\alpha} $.
In order to show this, observe that, on $\mathcal S'$, we have $J_n=1+\sum_{k=1}^n(D_k-1)$.
By the strong law of large numbers, almost surely on $\mathcal S'$, we have $\frac{J_n}{n}\to \boldsymbol{\alpha}$, giving the second statement. 
\end{proof}

Now we introduce a class of auxiliary processes coupled with the original one.

\begin{Definition}
\label{def_shifted_process}
Let $v\in V$.
We define the \emph{process shifted by $v$} to be the translated process started at a new root, namely $v$.
In particular, say that now $v\neq o$ is the root, then we \emph{discard} the edge connecting $v$ to $v^-$ and consider only the connected component containing $v$.
This guarantees that the distribution of the process does not change.
All quantities in the new version will present a superscript $(v)$, however, the random variables associated to each edge (passage times) and to each vertex (recovery clocks) will not be changed.
(E.g., $C_v^{(v)} \equiv C_v$.)
\end{Definition}
This procedure is well defined because the Galton-Watson tree is invariant (in a distributional sense) with respect to translation.
Moreover, the process started at any vertex $v$ at time $\tau_v$ has the same distribution as the original one.
In any collection of vertices so that none of them is a prefix of any other, the corresponding shifted processes are all independent of one another and distributed as the original one.

\begin{Proposition}
\label{prop_split_property}
There is a random set of vertices $U\subset V$ independent of the passage times and recovery clocks such that,
\begin{itemize}
    \item[(i)] $\Proba_\mathcal S$-almost surely we have 
    $1<\#U<+\infty$ and $\#\left (V\setminus\bigcup_{v\in U}V^{(v)}\right ) < \infty$.
    \item[(ii)] Under $\Proba_\mathcal S$ and conditional on $U$, $(G^{(v)})_{v\in U}$ is a collection of independent random graphs, with the same distribution as $G$. \end{itemize}
\end{Proposition}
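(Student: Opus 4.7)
The plan is to construct $U$ via a backbone decomposition of $G$. Define the \emph{backbone} $\mathcal{B} \subseteq V$ to be the set of vertices of $G$ with infinitely many descendants. Under $\Proba_\mathcal S$, $\mathcal{B}$ is an infinite subtree rooted at $o$, and is itself a Galton-Watson tree whose offspring distribution has mean $\m$ and positive mass on integers $\geq 2$ (this follows from a standard calculation: conditional on a vertex surviving, the number of surviving children has mean $\m$ and cannot be concentrated on $\{0,1\}$, since $\zeta$ is supercritical with extinction probability $q<1$ and thus $1-f'(q) > 0$, where $f$ is the PGF of $\zeta$). Let $\tau \geq 1$ be the first generation of $\mathcal{B}$ containing at least two vertices, and let $v^*$ denote the unique backbone vertex at generation $\tau - 1$. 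The event $\{\tau<+\infty\}$ has full measure on $\mathcal S$, as otherwise $\mathcal{B}$ would be an infinite path, an event of probability zero (at each backbone generation, the conditional probability of exactly one backbone offspring is strictly less than $1$). I would then set
\[
U := \{v \in \mathcal{B} : v^- = v^*\},
\]
i.e.\ the set of backbone children of $v^*$.

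Next I would verify property (i). By construction $\#U \geq 2$, and $\#U \leq B_{v^*} < +\infty$, where $B_{v^*}$ denotes the offspring count of $v^*$ in $G$. The residue $V \setminus \bigcup_{v \in U} V^{(v)}$ decomposes into the finite path from $o$ to $v^*$, the subtrees rooted at non-backbone siblings of each ancestor of $v^*$, and the subtrees rooted at non-backbone children of $v^*$. Each such non-backbone subtree is finite by the very definition of $\mathcal{B}$, and there are only finitely many of them, because the ancestry of $v^*$ is of finite length and each vertex on it has finitely many offspring. Hence the residue is $\Proba_\mathcal S$-a.s.\ of finite cardinality.

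For property (ii), I would invoke the branching property of $G$ in the form of a Geiger-type decomposition. Conditional on the tree structure from $o$ up to and including the offspring of $v^*$, together with the survival/extinction status of each offspring of $v^*$ (this information is exactly what determines $U$), the subtrees rooted at the offspring of $v^*$ are mutually independent: the branching property yields independence, and conditioning individually on survival or extinction preserves it. For $v \in U$, $G^{(v)}$ is then distributed as a Galton-Watson tree conditioned on survival, i.e.\ as $G$ under $\Proba_\mathcal S$. Finally, since $\mathcal{B}$, and hence $U$, are measurable functions of the graph structure only, $U$ is independent of the passage times $(T_e)_{e \in E}$ and the recovery clocks $(\mathbf{C}_v)_{v \in V}$.

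The main obstacle will be the clean justification of property (ii): namely, that conditioning on $U$, which incorporates the revealed tree structure up to $v^*$'s offspring \emph{and} the survival/extinction of those offspring, truly decouples the backbone subtrees into i.i.d.\ copies of $G$ under $\Proba_\mathcal S$. This boils down to a Geiger-type decomposition of a supercritical Galton-Watson tree conditioned on survival applied at $v^*$, and relies ultimately on the branching property together with independence of the survival events at distinct offspring. A secondary, less delicate point is the enumeration of the pieces contributing to the residue in (i) and the observation that each extinction-conditioned subtree is a.s.\ finite.
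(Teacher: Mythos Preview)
Your construction is essentially identical to the paper's: your $v^*$ is exactly the paper's vertex $\mathbf{w}$ (the first backbone vertex with at least two surviving children), and your $U$ coincides with the paper's $\mathbf{S}_{\mathbf{w}}$. Your justification of (ii) via a Geiger-type decomposition is more detailed than the paper's one-line appeal to ``distributional invariance of the Galton-Watson tree'', but the underlying approach is the same.
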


\begin{proof}
Proof of (i).
For any $v\in V$, consider the set of ``sons'' (i.e., direct descendants) of $v$ which are roots of infinite sub-trees in $G$, and denote it by
$
\mathbf S_v\coloneqq \{w\in \partial^*\{v\}:\#G^{(w)}=+\infty\}
$.

First, observe that $\P_{\mathcal S}$-almost surely there exists a vertex $v\in G$ such that $\#\mathbf S_v>1$. 
(Under $\P_{\mathcal S}$, $\{v\in G:\mathbf S_v\geq 1\}$ is an almost surely infinite supercritical Galton-Watson tree.)
Then, note that if $u,v \in V$ have a common ancestor $a\notin\{u, v\}$ and they are such that $\#\mathbf S_u>1$ and $\#\mathbf S_v\geq1$, then $\#\mathbf S_a>1$. 
It follows that there is a 
(random) vertex $\mathbf{w}\in V$ 
that, $\P_{\mathcal S}$-almost surely, is the unique vertex such that $\#\mathbf S_\mathbf{w}>1$ and $\#\left ( V\setminus V^{(\mathbf{w})}\right ) < \infty$. 
We set $U\coloneqq \mathbf S_{\mathbf{w}}$; see Figure \ref{fig:split_prop}.

Proof of (ii).
The second item follows directly from the definition of $U$, since $\mathbf S_{\mathbf{w}}$ is a (finite) set of ``siblings'' (hence not descendants of one another), and from the distributional invariance of the Galton-Watson tree.
\end{proof}

\begin{figure}[h!]
    \centering
    \includegraphics[height=4.5cm]{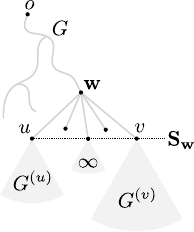}
    \caption{An illustration of the proof of Proposition \ref{prop_split_property}.}
    \label{fig:split_prop}
\end{figure}

\subsection{Analysis of the liminf}\label{subsect:GW-general}

In this subsection we shall prove results that will be applied to the sequences $(H_t)_{t\in\R}$ and $ (M_t)_{t\in\R}$, hence write $(\mathcal Q_t)_{t\in\R}$ 
to denote either of them.
In the next subsection we shall see how to use this in order to prove Theorem \ref{thm_supercritical}.
In a natural way, we set $\mathcal{Q}_t\coloneqq 0$, for all $t<0$.

The first goal is to provide a sufficient property to characterize the asymptotic lower bound for $\mathcal Q_t$, as $t\to \infty$.
More precisely, if $f:\R_{\geq 0}\rightarrow\R_{>0}$ is a non-decreasing function we want to get a non-trivial property on $f$ that guarantees 
$\liminf_{t}\frac{\mathcal Q_t}{f(t)}\geq 1$, $\P_{\mathcal{S}}$-almost surely.

In order to do so, we start by looking for an upper bound on $\P \left (\mathcal Q_t\leq m \right )$ when $(t, m)\in\R_{\geq 0}^2$ are large. 
We shall later apply the first Borel-Cantelli lemma to show that when $m=m(t)$ has the claimed expression, then the event $\{\mathcal Q_t\leq m\}$ occurs only finitely many times almost surely.
Below we make use of the fact that a Galton-Watson tree is self-similar (in a distributional sense) and, when it survives, it grows exponentially fast. 
For all $(t, m, n)\in\R_{\geq 0} \times \N \times \N$ let

\begin{equation}\label{eq:def-Pmnt}
P_m^n(t)\coloneqq \Proba[\mathcal Q_t\leq m, ~\#\partial^* A_{t-1}\geq n].
\end{equation}
Let $T \sim \mathrm{Exp}(1)$ be a random variable independent of the whole process.
For all $m\in\N^*$ set
\begin{equation}\label{eq:def-etam}
\eta(m)\coloneqq \Proba[\mathcal Q_{1-T}\leq m].
\end{equation}

\begin{Proposition}
\label{prop_boundary_ineq}
For all $(t, m, n)\in\R_{\geq 0} \times \N \times \N$, 
we have 
\[
P_m^n(t) \leq \eta(m)^n,
\]
where $P_m^n(t)$ and $\eta(m)$ are defined in \eqref{eq:def-Pmnt} and \eqref{eq:def-etam} respectively.
\end{Proposition}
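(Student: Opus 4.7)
The plan is to condition on the $\sigma$-algebra $\mathcal F_{t-1}$ generated by the exploration of the process up to time $t-1$, and then decouple the process on $n$ disjoint subtrees rooted at carefully chosen boundary vertices. Two ingredients will do the work: the memoryless property of the $\mathrm{Exp}(1)$ passage times on edges crossing from $A_{t-1}$ to $\partial^* A_{t-1}$, and the branching (distributional self-similarity) property of the Galton--Watson tree, which makes the subtrees rooted at distinct boundary vertices into i.i.d.\ copies of $G$, independent of the explored region.

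On the event $\{\#\partial^* A_{t-1}\geq n\}$, I would select (in a measurable way, e.g.\ using a fixed labeling of $V$) some $n$ vertices $v_1,\dots,v_n \in \partial^* A_{t-1}$. For each $i$, since $v_i$ has not been reached by time $t-1$, memorylessness implies that the residual passage time $T_i^* \coloneqq \tau_{v_i} - (t-1)$ is $\mathrm{Exp}(1)$ conditional on $\mathcal F_{t-1}$; because the $v_i$ lie in disjoint subtrees $V^{(v_i)}$, these residuals are conditionally independent. The shifted processes $G^{(v_1)}, \ldots, G^{(v_n)}$ from Definition \ref{def_shifted_process} are driven by passage times internal to the $V^{(v_i)}$ and recovery clocks on their vertices --- all of which are unused by time $t-1$ --- so conditional on $\mathcal F_{t-1}$ they are jointly independent, each distributed as the original process, and independent of $(T_1^*,\dots,T_n^*)$. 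The key pathwise observation is that any red cluster or red path living inside $V^{(v_i)}$ at time $t$ of the original process coincides with the analogous object produced by the shifted process at elapsed time $t-\tau_{v_i} = 1-T_i^*$, so (with the convention $\mathcal Q_s^{(v_i)} = 0$ for $s<0$) we get $\mathcal Q_t \geq \mathcal Q_{1-T_i^*}^{(v_i)}$ for every $i$. Taking conditional probabilities given $\mathcal F_{t-1}$ and using the independence and identical distributions just noted,
\[
\P[\mathcal Q_t \leq m \mid \mathcal F_{t-1}]\, \mathbf{1}_{\{\#\partial^* A_{t-1}\geq n\}} \leq \prod_{i=1}^{n} \P[\mathcal Q_{1-T_i^*}^{(v_i)} \leq m \mid \mathcal F_{t-1}]\, \mathbf{1}_{\{\#\partial^* A_{t-1}\geq n\}} = \eta(m)^n\, \mathbf{1}_{\{\#\partial^* A_{t-1}\geq n\}},
\]
and taking expectations yields $P_m^n(t)\leq \eta(m)^n$.

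The main obstacle, which I would spell out with care, is the joint independence statement in the second paragraph: the branching property of the Galton--Watson tree must be invoked to guarantee that, conditional on $\mathcal F_{t-1}$, the descendant subtrees of the boundary vertices $v_1,\dots,v_n$ are genuinely unexplored and therefore i.i.d.\ copies of $G$; coupled with memorylessness for the crossing passage times, this legitimately decouples the $n$ events of interest. Once this decoupling is set up, the pathwise comparison $\mathcal Q_t \geq \mathcal Q_{1-T_i^*}^{(v_i)}$ and the final averaging step are routine.
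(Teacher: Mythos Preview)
Your proposal is correct and follows essentially the same approach as the paper. The only cosmetic differences are that the paper decomposes over the possible shapes $F$ of $\partial^* A_{t-1}$ (and uses all of $F$, invoking $\#F\geq n$ at the end) rather than conditioning on a $\sigma$-algebra $\mathcal F_{t-1}$ and measurably selecting exactly $n$ boundary vertices; the underlying ingredients---memorylessness for the residual crossing times and the branching property for the disjoint descendant subtrees---are identical.
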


\begin{proof}
By construction, for all fixed $t \geq 0$, the set $\partial^*A_{t-1}$ is almost surely finite.
For all $n\geq 1$ let $\mathcal B_{n+}$ denote the set all possible shapes of size 
at least $n$ for $\partial^*A_{t-1}$, i.e. the set of all subsets of vertices $F$ such that $\#F\geq n$ and $ \Proba(\partial^*A_{t-1}=F)>0$.
Hence, by definition of $P_m^n(t)$ and of the shifted process, one has
\[
\begin{split}
P_m^n(t)
& =\sum_{F\in\mathcal B_{n+}} \Proba(\{ \mathcal Q_t\leq m\}\cap \{\partial^*A_{t-1}=F\}) \\
& \leq\sum_{F\in\mathcal B_{n+}}\Proba[\{ \forall v\in F, \,\mathcal Q_{t-\tau_v}^{(v)}
\leq m\}\cap \{ \partial^*A_{t-1} = F \} ],
\end{split}
\]
where $\tau_v$ was introduced in Definition \ref{def-reachingtimes2}. 
Fix any $F\in\mathcal B_{n+}$. 
By the memoryless property of the exponential distribution, when we condition on $\{\partial^*A_{t-1}=F\}$, the set $(\tau_v-t+1)_{v\in F}$ is a collection of i.i.d.\ $\mathrm{Exp}(1)$ random variables. 

Now consider the collection of processes shifted by $v$, where $v$ varies in $F$.
Thus, conditional on $\{\partial^*A_{t-1}=F\}$, such collection consists of
i.i.d.\ processes, distributed as the original one,  independent of $(\tau_v)_{v\in F}$. 
Therefore, 
\[
\Proba[\forall v\in F, \,\mathcal Q_{t-\tau_v}^{(v)}\leq m \mid F]
= \left ( \Proba[\mathcal Q_{1-T} \leq m]\right )^{\#F}  \leq \eta(m)^n
\]
(where the last inequality follows from the fact that $\#F\geq n$),
and hence
\[
\begin{split}
P_m^n(t) & \leq\sum_{F\in\mathcal B_{n+}}  \eta(m)^n
\Proba(\partial^*A_{t-1}=F) 
= \eta(m)^n \Proba(\#\partial^*A_{t-1}\geq n).
\end{split}
\]
Hence the claim follows.
\end{proof}
An illustration of the reasoning in the above proof is given in Figure \ref{fig:boundary_ineq}.

\begin{figure}[h!]
    \centering
    \includegraphics[height=3.8cm]{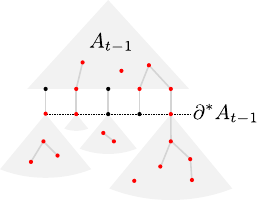}
    \caption{
    Possible configuration of the red vertices at time $t$.}
    \label{fig:boundary_ineq}
\end{figure}

The next auxiliary lemma gives a bound on the desired liminf restricted to a random sequence of times.
Recall the definition of $\boldsymbol{\alpha}$ from \eqref{eq:def-alpha}.

\begin{Lemma}
\label{lemma_sub_seq}
Let $(Y_n)_{n\geq1}$ and $(Z_n)_{n\geq1}$ be two random sequences 
so that $(Z_n)_{n\geq1}$ is independent of the process of FPP with recovery, such that $\P (\frac{Y_n}{\log n}\to \boldsymbol{\alpha}^{-1})=1 $ and $\P (\frac{Z_n}{\log n}\to \boldsymbol{\alpha}^{-1})=1 $.
Let $\eta (\cdot)$ be as in Proposition \ref{prop_boundary_ineq} and let $h:\R_{\geq 0}\rightarrow\R_{>0}$ be a non-decreasing function so that, for all $x>0$ large enough,
\begin{equation}\label{eq:assump-etah}
\eta \left ( h(x)\right )\leq \exp(-1/x).
\end{equation}
Then, for all $\beta\in(0,\boldsymbol{\alpha})$,
\[
\liminf_{n\rightarrow +\infty}\frac{\mathcal Q_{Z_n}}{h(e^{\beta Y_n})}\geq 1,\quad \P_{\mathcal{S}}\text{-almost surely}.
\]
\end{Lemma}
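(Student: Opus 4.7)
Fix $\beta\in(0,\boldsymbol{\alpha})$ and pick $\beta_1\in(\beta,\boldsymbol{\alpha})$.
The strategy is to apply the first Borel--Cantelli lemma to the events $\mathcal{E}_n:=\{\mathcal{Q}_{Z_n}\leq h(e^{\beta Y_n})\}$: once one knows $\P_{\mathcal{S}}(\mathcal{E}_n\text{ i.o.})=0$, then $\mathcal{Q}_{Z_n}>h(e^{\beta Y_n})$ eventually, yielding the sought liminf.
The driving input is Proposition \ref{prop_boundary_ineq}: combined with the hypothesis $\eta(h(x))\leq e^{-1/x}$, the formal choice $m\approx h(e^{\beta Y_n})$ and $N\approx e^{\beta_1 Y_n}$ produces a bound of the form $\exp\!\bigl(-e^{(\beta_1-\beta)Y_n}\bigr)$; since $Y_n$ grows like $\log n$, this is super-polynomially small.
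The only real difficulty is to make this estimate rigorous, because $Y_n$ is allowed to depend on the FPP process.

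First I would reduce the problem to a convenient good event.
Since $\partial^{*}A_t$ is constant on each interval $[\theta_n,\theta_{n+1})$, Lemma \ref{lemma_controlled_growth} yields $\P_{\mathcal{S}}$-a.s.\ that $\log\#\partial^{*}A_t/t\to\boldsymbol{\alpha}$.
Combining this with the hypotheses $Y_n/\log n\to\boldsymbol{\alpha}^{-1}$ and $Z_n/\log n\to\boldsymbol{\alpha}^{-1}$, an elementary computation (using $\beta_1<\boldsymbol{\alpha}$, so a small slack in the exponents is affordable) shows that for any sufficiently small $\epsilon>0$ the event
\[
\mathcal{F}_n:=\bigl\{Y_n,\,Z_n\in[(\boldsymbol{\alpha}^{-1}-\epsilon)\log n,\,(\boldsymbol{\alpha}^{-1}+\epsilon)\log n]\bigr\}\cap\bigl\{\#\partial^{*}A_{Z_n-1}\geq e^{\beta_1 Y_n}\bigr\}
\]
holds eventually a.s.\ on $\mathcal{S}$.
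Hence it suffices to prove $\sum_n\P(\mathcal{E}_n\cap\mathcal{F}_n)<\infty$.

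To estimate $\P(\mathcal{E}_n\cap\mathcal{F}_n)$ I would discretise $Y_n$: fix a small mesh $\epsilon'>0$ and partition $\R_{\geq 0}$ into intervals $I_k:=[k\epsilon',(k+1)\epsilon')$, $k\in\N$.
On $\{Y_n\in I_k\}$, monotonicity of $h$ and of $\eta$ bound the random thresholds by their deterministic values at the endpoints of $I_k$; since $Z_n$ is independent of the FPP process, conditioning on $Z_n$ and applying Proposition \ref{prop_boundary_ineq} with $m=\lfloor h(e^{\beta(k+1)\epsilon'})\rfloor$ and $N=\lceil e^{\beta_1 k\epsilon'}\rceil$ gives
\[
\P\!\bigl(\mathcal{E}_n\cap\mathcal{F}_n\cap\{Y_n\in I_k\}\bigr)\leq \eta\!\bigl(h(e^{\beta(k+1)\epsilon'})\bigr)^{\lceil e^{\beta_1 k\epsilon'}\rceil}\leq \exp\!\bigl(-e^{(\beta_1-\beta)k\epsilon'-\beta\epsilon'}\bigr),
\]
where the last inequality uses $\eta(h(x))\leq e^{-1/x}$, valid once $k\epsilon'$ is large.
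Restriction to $\mathcal{F}_n$ forces $k\geq k_n^{\ast}:=\lfloor(\boldsymbol{\alpha}^{-1}-\epsilon)\log n/\epsilon'\rfloor$, and the super-exponential decay of the bound in $k$ reduces the sum over $k\geq k_n^{\ast}$ to its leading term, giving $\P(\mathcal{E}_n\cap\mathcal{F}_n)\leq C\exp(-n^{\mu})$ with $\mu:=(\beta_1-\beta)(\boldsymbol{\alpha}^{-1}-\epsilon)>0$ for $\epsilon$ small enough.
This is summable in $n$, so Borel--Cantelli closes the argument.
The main obstacle throughout is exactly the possible process-dependence of $Y_n$, which prevents a direct application of Proposition \ref{prop_boundary_ineq} with random parameters; the discretisation--monotonicity device resolves it at the negligible price of summing a super-exponentially decaying series.
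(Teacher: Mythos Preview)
Your proposal is correct and follows essentially the same route as the paper: intersect the bad event $\{\mathcal{Q}_{Z_n}\le h(e^{\beta Y_n})\}$ with a good event that holds eventually $\P_{\mathcal S}$-a.s., use the good event to replace the random thresholds by deterministic ones, apply Proposition~\ref{prop_boundary_ineq} (via independence of $Z_n$) to get a summable bound, and conclude by Borel--Cantelli. The only notable difference is cosmetic: the paper bakes the deterministic bound $h(e^{\beta Y_n})\le h(n^{y})$ and $\#\partial^*A_{Z_n-1}\ge \beta n^{z}$ (with $\beta/\boldsymbol{\alpha}<y<z<1$) directly into its good event $G_n$, so no discretisation of $Y_n$ is needed---and indeed your own event $\mathcal{F}_n$ already pins $Y_n$ between deterministic multiples of $\log n$, which makes the $I_k$-partitioning step superfluous.
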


\begin{proof}
Fix $y\in(\frac{\beta}{\boldsymbol{\alpha}},1)$ and $z\in(y,1)$. 
For all $n\geq 1$ define
\[
B_n\coloneqq \{\mathcal Q_{Z_n}\leq h(e^{\beta Y_n})\} \quad \text{ and }\quad G_n\coloneqq \{\#\partial^*A_{Z_n-1}\geq \beta n^z, ~h(e^{\beta Y_n})\leq h(n^y)\}.
\]
Since $(Z_n)_{n\geq1}$ is independent of the process, then for all $n\geq1$, 
\[
\Proba(B_n\cap G_n)\leq\Proba[\mathcal Q_{Z_n}\leq h(n^{y}), \ \#\partial^*A_{Z_n-1}\geq \beta n^z]=\E\left [P_{h(n^y)}^{\beta n^z}(Z_n) \right ],
\]
where the last equality follows from the definition in \eqref{eq:def-Pmnt}.
Thus, by Proposition \ref{prop_boundary_ineq} and by assumption \eqref{eq:assump-etah}, 
%
%
we get that for all $n\geq 1$ sufficiently large
\[
\begin{split}
\Proba(B_n\cap G_n) & \leq \E \left [P_{h(n^y)}^{\beta n^z}(Z_n) \right ] \ \stackrel{\text{Prop.\ \ref{prop_boundary_ineq}}}{\leq}  \E \left [ \eta \left ( h(n^y) \right )^{\beta n^z}\right ]
\stackrel{\eqref{eq:assump-etah} }{\leq }\exp \left ( -\frac{1}{n^y}\, \beta n^z \right ) = \exp(-\beta n^{z-y}).
\end{split}
\]
Since $z-y>0$, it follows that $\sum_{n\geq 1}\Proba(B_n\cap G_n)<\infty$. 
Moreover, for all event $E$ so that $\Proba (E)=0$ we also have $\Proba_\mathcal S (E)=0$.
Thus, by the first Borel-Cantelli lemma
\begin{equation}
    \label{eq_limsup_bc}
    \Proba_\mathcal S\left[\limsup_{n\to+\infty}(B_n\cap G_n)\right]=0.
\end{equation}
Our next aim is to show that the sequence of events $(G_n)_{n\geq1}$ does not give any real contribution in the above intersection, implying that \eqref{eq_limsup_bc} only depends on the limsup of the events $(B_n)_{n\geq1}$.


Since $y/\beta>1/\boldsymbol{\alpha}$, from the asymptotic behavior of $(Y_n)_{n\geq1}$ it follows that almost surely for all $n\geq 1$ large enough we have $Y_n\leq\frac{y\log n}{\beta}$. Thus, since $h$ and the exponential function are non-decreasing, $\P_\mathcal S$-almost surely,
\begin{equation}\label{eq:G1}
h(e^{\beta Y_n})\leq h(n^{y}) \quad \text{for all $n$ sufficiently large.}
\end{equation}
Using the assumption on the asymptotic behavior of $(Z_n)_{n\geq1}$ and the fact that $z< 1$, together with Lemma \ref{lemma_controlled_growth}, it follows that $\P_\mathcal S$-a.s.\ for all $n$ and $k$ large enough,
\[
Z_n-1 \geq \frac{\sqrt{z}}{\boldsymbol{\alpha}} \log n \quad \text{ and } \quad \theta_k\leq \frac{1}{\sqrt{z} \boldsymbol{\alpha}}\log k.
\]
Thus, for all $n$ and $k$ large enough, $ Z_n-1-\theta_k\geq\frac{\sqrt{z}}{\boldsymbol{\alpha}}\log n-\frac{1}{\sqrt{z} \boldsymbol{\alpha}}\log k $.
Setting $k=\lfloor n^z\rfloor$, we get that $\P_{\mathcal S}$-almost surely,
\begin{equation}\label{eq:zn-1}
Z_n-1\geq\theta_{\lfloor n^z\rfloor} \quad \text{for all $n$ sufficiently large}.
\end{equation}
Using again Lemma \ref{lemma_controlled_growth}, since $\beta<\boldsymbol{\alpha}$, $\P_\mathcal S$-a.s.\ for all $n\geq1$ large enough we have 
$\#\partial^*A_{\theta_n}\geq\beta(n+1)$, which implies that 
for all $n$ large and $t\geq\theta_{\lfloor n^z\rfloor}$ we have $\#\partial^*A_t\geq\beta n^z$. 
As a consequence, 
by \eqref{eq:zn-1} one obtains that there is a $\P_{\mathcal{S}}$-almost surely finite value $n_0$ so that, for all $n\geq n_0$,
\begin{equation}\label{eq:G2}
\#\partial^*A_{Z_n-1}\geq \beta n^z.
\end{equation}
Putting together equations \eqref{eq:G1} and \eqref{eq:G2} gives that there is a $\P_{\mathcal{S}}$-almost surely finite value $n_1$ so that, for all $n\geq n_1$, $G_n$ is realized. 
Together with \eqref{eq_limsup_bc} we obtain
$
\Proba_\mathcal S\left[\limsup_{n}B_n\right]=0
$,
which implies the claim.
\end{proof}

Finally, the previous lemma and Proposition \ref{prop_split_property} will be used to obtain a sufficient characterization for an asymptotic lower bound.

\begin{Theorem}
\label{thm_lower_bound}
Let $h:\R_{\geq 0}\rightarrow\R_{>0}$ be a non-decreasing function such that $\eta \left ( h(x)\right )\leq \exp(-1/x)$ for all $x$ large enough. 
Then, for all $\beta\in(0,\boldsymbol{\alpha})$ we have
\[
\liminf_{t\rightarrow+\infty}\frac{\mathcal Q_t}{h(e^{\beta t})}\geq 1, \quad
\P_{\mathcal{S}}\text{-almost surely}.
\]
\end{Theorem}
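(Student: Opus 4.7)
The plan is to combine Lemma~\ref{lemma_sub_seq} with the split afforded by Proposition~\ref{prop_split_property} to establish the desired lower bound at every reaching time of the process on $G$, and then to use the elementary jump structure of $t\mapsto\mathcal{Q}_t$ to extend it to all $t$. The crucial observation is that $\mathcal{Q}_t$ is right-continuous and piecewise constant: it can increase only at reaching events, and by at most one at each such event (in a tree a newly reached vertex $v$ has at most one already-red neighbor, namely its parent $v^-$, so joining $v$ to the process extends the largest red cluster or longest red path by at most one), while at a recovery event it can only decrease.

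Fix $\beta'\in(\beta,\boldsymbol{\alpha})$. By Proposition~\ref{prop_split_property}, on $\mathcal{S}$ there almost surely exists a finite random set $U=\{u_1,\ldots,u_K\}$ of siblings with $K\geq 2$ such that the subtrees $G^{(u_1)},\ldots,G^{(u_K)}$ are independent, each distributed as $G$ and infinite, and $V\setminus\bigcup_j V^{(u_j)}$ is finite. For every $j$, pick any $i(j)\neq j$, let $T_n^{(j)}$ denote the absolute reaching time of the $n$-th vertex reached in $G^{(u_j)}$, and set $Z_n:=T_n^{(j)}-\tau_{u_{i(j)}}$, $Y_n:=Z_n$. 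The random variables $T_n^{(j)}$ and $\tau_{u_{i(j)}}$ depend only on passage times and recovery clocks lying outside $V^{(u_{i(j)})}$, so by Proposition~\ref{prop_split_property}(ii) the sequence $(Z_n)_n$ is independent of the shifted process on $G^{(u_{i(j)})}$; Lemma~\ref{lemma_controlled_growth} applied to $G^{(u_j)}$ yields $Z_n/\log n\to 1/\boldsymbol{\alpha}$ a.s.\ on $\mathcal{S}$. Applying Lemma~\ref{lemma_sub_seq} (with parameter $\beta'$) to this shifted process gives
\[
\liminf_{n\to\infty}\frac{\mathcal{Q}^{(u_{i(j)})}_{Z_n}}{h(e^{\beta' Z_n})}\geq 1 \qquad \P_\mathcal{S}\text{-a.s.}
\]
Combined with the inclusion $\mathcal{Q}_t\geq\mathcal{Q}^{(u_{i(j)})}_{t-\tau_{u_{i(j)}}}$ (valid for $t\geq\tau_{u_{i(j)}}$) and the inequality $\beta'(T-\tau_{u_{i(j)}})\geq \beta T$, which holds whenever $T\geq\beta'\tau_{u_{i(j)}}/(\beta'-\beta)$, this yields: for every $\varepsilon>0$, $\P_\mathcal{S}$-a.s., $\mathcal{Q}_{T_n^{(j)}}\geq(1-\varepsilon)\,h(e^{\beta T_n^{(j)}})$ for all sufficiently large $n$.

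Taking the (finite) union over $j\in\{1,\ldots,K\}$, almost surely on $\mathcal{S}$ the above bound fails only at finitely many reaching times of $G$, because $\bigcup_j V^{(u_j)}$ covers $V$ up to a finite set. Let $T_{\mathrm{prev}}<T_{\mathrm{next}}$ be two consecutive (large) reaching times of $G$. On $(T_{\mathrm{prev}},T_{\mathrm{next}})$ only recoveries can occur, hence $\mathcal{Q}$ is non-increasing there and $\mathcal{Q}_t\geq\mathcal{Q}_{T_{\mathrm{next}}-}\geq \mathcal{Q}_{T_{\mathrm{next}}}-1$. Using the reaching-time bound at $T_{\mathrm{next}}$ and the monotonicity of $x\mapsto h(e^{\beta x})$,
\[
\frac{\mathcal{Q}_t}{h(e^{\beta t})}\;\geq\;(1-\varepsilon)\,\frac{h(e^{\beta T_{\mathrm{next}}})}{h(e^{\beta t})}-\frac{1}{h(e^{\beta t})}\;\geq\;(1-\varepsilon)-\frac{1}{h(e^{\beta t})}.
\]
Letting $t\to+\infty$ so that $h(e^{\beta t})\to\infty$, and then $\varepsilon\to 0$, concludes the proof.

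The main obstacle is absorbing the shift by $\tau_{u_{i(j)}}$ that appears inside $h$: Lemma~\ref{lemma_sub_seq} naturally delivers a bound involving $h(e^{\beta'(T-\tau_{u_{i(j)}})})$, and converting this into the cleaner $h(e^{\beta T})$ is precisely what the margin $\beta'>\beta$ is spent on. Since there are only finitely many pairs $(i(j),j)$ in play and each $\tau_{u_{i(j)}}$ is finite $\P_\mathcal{S}$-almost surely, the exceptional finite sets combine into a single finite exceptional set, preserving the a.s.\ conclusion; one must simply take care to set up the bookkeeping so the small parameter $\varepsilon$ and the threshold $T\geq\beta'\tau_{u_{i(j)}}/(\beta'-\beta)$ are handled uniformly in $j$.
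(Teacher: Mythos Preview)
Your proof follows the same architecture as the paper's --- split via Proposition~\ref{prop_split_property}, apply Lemma~\ref{lemma_sub_seq} to one sibling's subtree using reaching times from another sibling to secure independence, then pass from the reaching-time skeleton to continuous time --- but with two tactical differences worth noting.

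First, to absorb the additive shift $\tau_{u_{i(j)}}$ you introduce an auxiliary $\beta'>\beta$ and spend the margin $\beta'-\beta$ on the inequality $\beta'(T-\tau_{u_{i(j)}})\geq\beta T$. The paper instead exploits that Lemma~\ref{lemma_sub_seq} allows $Y_n\neq Z_n$: it takes $Y_n^u=\tau_u+\theta_n^{(u)}$ (the absolute reaching time, so that $h(e^{\beta Y_n^u})$ already carries the correct time variable) and $Z_n^u=Y_n^u-\tau_{u^*}$. Both go like $(\log n)/\boldsymbol{\alpha}$, so the lemma applies directly at parameter $\beta$ with no shift to undo. Your route is perfectly valid; the paper's just uses the lemma's two-sequence flexibility more fully.

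Second, for the passage to all $t$ you use $\mathcal Q_t\geq \mathcal Q_{T_{\mathrm{next}}}-1$ and then need $h(e^{\beta t})\to\infty$ to kill the ``$-1$''. The paper works with the left limit $\mathcal Q_{\theta_n}^-$ and the exact inequalities $\mathcal Q_t\geq \mathcal Q_{\theta_{\#A_t+1}}^-$ and $h(e^{\beta t})\leq h(e^{\beta\theta_{\#A_t+1}})$, which give the ratio bound with no residual term. This is a small but genuine gap in your argument: the theorem as stated does not assume $h$ is unbounded, and the hypothesis $\eta(h(x))\leq e^{-1/x}$ does not force it (since $e^{-1/x}\to 1^-$ while $\eta(m)<1$ for every fixed $m$, a constant $h$ satisfies the hypothesis for large $x$). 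The fix is immediate --- replace your ``$-1$'' step by the left-limit device --- and in the actual applications (Lemmas~\ref{lemma_eta_bound_H} and~\ref{lemma_eta_bound_M}) $h$ does tend to infinity, so your argument covers everything the paper needs.
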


\begin{proof}
For all $t_0\geq 0$ define
\[
\mathcal Q_{t_0}^-\coloneqq \lim_{t\to t_0^-} \mathcal Q_t;
\]
in words, $\mathcal Q_{t_0}^- $ coincides with $\mathcal Q_{t_0} $ except that it ignores the last vertex reached by the process at time $t_0$, if there is one. 
First we show that, for all $t>0$,
\begin{equation}\label{eq:aux-Qminus}
\mathcal Q_t\geq \mathcal Q_{\theta_{\#A_t+1}}^- \quad\text{and}\quad h(e^{\beta t})\leq h\left(e^{\beta\theta_{\#A_t+1}}\right)\quad\P_{\mathcal{S}}\text{-almost surely}.
\end{equation}
The first inequality can be deduced from the following observations.
At any fixed time $t$ the quantity $\mathcal{Q}_{\theta_{\#A_t+1}}^-$ is the size of a longest red path (or red cluster) at time $\theta_{\#A_t+1}$, excluding the last occupied vertex.
Since $\theta_{\#A_t+1}$ is the first time that the occupied set has size $\#A_t+1$ then, when ignoring the last infected vertex, the occupied set has size $\# A_t$.
Thus, $\theta_{\#A_t+1}\geq t$.
However, between $t$ and $\theta_{\#A_t+1}$ only one vertex turns red (the one we ignore) while other red vertices might recover, then 
$\mathcal Q_t\geq \mathcal Q_{\theta_{\#A_t+1}}^-$.
The second inequality follows from the fact that $\theta_{\#A_t+1}\geq t$ and $h$ is non-decreasing by assumption.

By dividing both terms of the first inequality in \eqref{eq:aux-Qminus} by the terms in the second inequality and taking the liminf, since $\#A_t\rightarrow \infty$, we obtain
\[
\liminf_{t\rightarrow+\infty}\frac{\mathcal Q_t}{h(e^{\beta t})}\geq\liminf_{n\rightarrow+\infty}\frac{\mathcal Q_{\theta_n}^-}{h(e^{\beta\theta_n})}.
\]
The sub-sequence of times $(\theta_n )_n$ has some of the desired properties, but it is not independent of the process, thus we can not apply Lemma \ref{lemma_sub_seq} directly. 
To get round this problem we apply Proposition \ref{prop_split_property}. 
First, recall from Proposition \ref{prop_split_property} that $U$ is a discrete random variable representing a certain (finite) set of vertices. 
Therefore, it is enough to prove the claim on the event $\{U=F\}$ for any arbitrary finite set $F$, and then let $F$ vary. 
Fix a finite set of vertices $F$ such that $\Proba_\mathcal S(U=F)>0$, in particular we have $\#F>1$. 
From now on we reason under $\Proba_\mathcal S$ and conditional on the event $\{U=F\}$. 
For every $u\in F$, fix $u^*\in F$ such that $u^*\neq u$. For all $u\in F$ and $n\geq1$, define
\[
Y_n^u\coloneqq \tau_u+\theta_n^{(u)}\quad\text{and}\quad Z_n^u\coloneqq Y_n^u-\tau_{u^*}.
\]
All but finitely many vertices of $V$ belong to $G^{(u)}$ for some $u\in F$. 
Thus, by Proposition \ref{prop_split_property}, for all $n\geq 1$ large enough, for some $u\in F$ at time $\theta_n$ there will be a vertex $v$ in $G^{(u)}$ that will be reached by the process (see Figure \ref{fig:last_thm}). 
Therefore, $\P_{\mathcal{S}}$-almost surely given $\{U=F\}$ for all $n\geq1$ big enough there exists a (random) vertex $\mathbf{v}_n\in F$ and a (random) index $L_n\geq 1$ such that 
\[
\theta_n=Y_{L_n}^{\mathbf{v}_n}.
\]
By construction we have then
\begin{equation}\label{eq:aux-Qtheta}
\mathcal Q_{\theta_n}^-\geq \mathcal Q^{(\mathbf{v}_n^*)}_{Z_{L_n}^{\mathbf{v}_n}}.
\end{equation}
In fact, start by noticing that, since $\theta_n=Y_{L_n}^{\mathbf{v}_n} $, then $Z_{L_n}^{\mathbf{v}_n}= Y_{L_n}^{\mathbf{v}_n}-\tau_{\mathbf{v}_n^*} = \theta_n -\tau_{\mathbf{v}_n^*}$ is the time elapsed from the moment vertex $\mathbf{v}_n^*$ has been reached by the process.
Thus, the quantity $ Q^{(\mathbf{v}_n^*)}_{Z_{L_n}^{\mathbf{v}_n}}=Q^{(\mathbf{v}_n^*)}_{Y_{L_n}^{\mathbf{v}_n}-\tau_{\mathbf{v}_n^*}}$ is the size of a maximal red path (or maximal red cluster) at time $\theta_n$ when we restrict the observed process to $G^{(\mathbf{v}_n^*)}$.
However, the size of a maximal red path (or red cluster) in this case is at most the size of a maximal red path (or red cluster) at time $\theta_n$ in $G$.
Thus, 
\[
\mathcal Q_{\theta_n}\geq \mathcal Q^{(\mathbf{v}_n^*)}_{Z_{L_n}^{\mathbf{v}_n}}.
\]
Furthermore, the ``minus'' in the notation of \eqref{eq:aux-Qtheta} comes from the fact that at time $\theta_n$ only one vertex has been added to the cluster, and it was $\mathbf{v}_n\notin G^{(\mathbf{v}_n^*)}$, hence we can ignore the last added vertex, obtaining \eqref{eq:aux-Qtheta}.
Now observe that, by definition, $L_n\rightarrow \infty$ as $n\rightarrow \infty$.
Therefore, 
\[
\liminf_{n\rightarrow+\infty}\frac{\mathcal Q_{\theta_n}^-}{h(e^{\beta\theta_n})}\geq\min_{u\in F}\liminf_{n\rightarrow+\infty}\frac{\mathcal Q^{(u^*)}_{Z_n^u}}{h(e^{\beta Y_n^u})}, \quad
\P_{\mathcal{S}}\text{-almost surely, given } \{U=F\}.
\]
Next, fix $u\in F$. 
By Lemma \ref{lemma_controlled_growth}, $\P_{\mathcal{S}}$-almost surely 
$
\lim_n \frac{Z_n^u}{\log n} = \lim_n  \frac{Y_n^u}{\log n} = \lim_n \frac{\theta_n^{(u)}}{\log n} = \frac{1}{\boldsymbol{\alpha}}
$.
Moreover, the shifted process corresponding to $u^*$ has the same distribution as the original one and it is independent of $(Z_n^u)_{n\geq 1}$. 
Hence, by Lemma \ref{lemma_sub_seq} we get
\[
\liminf_{n\rightarrow+\infty}\frac{\mathcal Q^{(u^*)}_{Z_n^u}}{h(e^{\beta Y_n^u})}\geq 1, \quad
\P_{\mathcal{S}}\text{-almost surely, given } \{U=F\}. 
\]
By letting $F$ vary, we obtain the claim.
\end{proof}

\begin{figure}[ht]
    \centering
    \includegraphics[height=4.5cm]{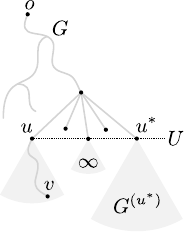}
    \caption{Illustration for the proof of Theorem \ref{thm_lower_bound}.}
    \label{fig:last_thm}
\end{figure}

\subsection{Proofs of Theorem \ref{thm_supercritical} and Corollary \ref{coro_lb_gw_at}}\label{subsect:GWproofs}

In this subsection we use the previous results to compute the sought asymptotic lower bounds.
The first two lemmas guarantee that the probabilities that we are investigating do indeed satisfy assumption \eqref{eq:assump-etah}.
Recall that for all $t<0$ we set $H_t\coloneqq  0$ and $M_t\coloneqq 0$.

\begin{Lemma}
\label{lemma_eta_bound_H}
Let $T\sim\mathrm{Exp}(1)$ be independent of the process, 
and $r\in(0,1)$. 
Then for all $x>e$ large enough,
\[
\Proba\left[H_{1-T}\leq\frac{r\log x}{\log\log x}\right]\leq e^{-1/x}.
\]
\end{Lemma}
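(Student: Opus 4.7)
Set $m \coloneqq \lceil r\log x/\log\log x\rceil$. Since $1-y\le e^{-y}$ for $y\ge 0$, it suffices to exhibit an event $\mathcal E$ on which $H_{1-T}\ge m$ with $\Proba(\mathcal E)\ge 1/x$ for all $x$ large enough: then $\Proba[H_{1-T}\le m]\le 1-\Proba(\mathcal E)\le 1-1/x\le e^{-1/x}$.

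The strategy is to force a long red path by brute force along a single ray of the Galton--Watson tree $G$. First I would fix, measurably from $G$ alone, a ray $o=\mathbf{r}_0,\mathbf{r}_1,\ldots$ on the survival event $\mathcal S$ (e.g.\ the leftmost infinite ray), and define $\mathcal E$ as the intersection of
\begin{itemize}
\item[(a)] $\mathcal S$ (so that the ray exists);
\item[(b)] $\{T\le 1/2\}$;
\item[(c)] $\bigl\{\sum_{j=1}^{m-1} T_{\mathbf{r}_j}\le 1/2\bigr\}$;
\item[(d)] $\{\mathbf{C}_{\mathbf{r}_i}\ge 2 \text{ for every } 0\le i\le m-1\}$.
\end{itemize}
On $\mathcal E$ we have $1-T\ge 1/2$, $\tau_{\mathbf{r}_i}\le\sum_{j=1}^{m-1}T_{\mathbf{r}_j}\le 1/2\le 1-T$, and $\mathbf{C}_{\mathbf{r}_i}\ge 2>1\ge 1-T-\tau_{\mathbf{r}_i}$, so $(\mathbf{r}_0,\ldots,\mathbf{r}_{m-1})$ is a red path of size $m$ at time $1-T$, giving $H_{1-T}\ge m$.

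Next I would bound $\Proba(\mathcal E)$ by exploiting the mutual independence of the tree, the independent exponential $T$, the edge passage times, and the recovery clocks: the probability factorizes as $q\cdot(1-e^{-1/2})\cdot \Proba[\Gamma(m-1,1)\le 1/2]\cdot e^{-2\gamma m}$, where $q>0$ is the survival probability. The gamma-tail factor is bounded below by $(1/2)^{m-1}e^{-1/2}/(m-1)!$, and by Stirling $\log((m-1)!)=m\log m+O(m)$. Therefore
\[
\log\Proba(\mathcal E)\ge -m\log m-(2\gamma+O(1))\,m+O(1).
\]
Plugging in $m=r\log x/\log\log x$ and using $\log m=\log\log x+O(\log\log\log x)$ gives $m\log m=r\log x+o(\log x)$, while $m=o(\log x)$ absorbs the linear-in-$m$ terms. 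Hence $\Proba(\mathcal E)\ge x^{-r-o(1)}$, and since $r<1$ this exceeds $1/x$ for all sufficiently large $x$, yielding the claim.

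The only real obstacle is the accounting: one must check that the slack $1-r>0$ is genuinely large enough to absorb the lower-order corrections to $m\log m$ together with the sub-polynomial recovery-clock factor $e^{-2\gamma m}=x^{-o(1)}$, so that $\Proba(\mathcal E)$ still beats $1/x$. No conceptual difficulty appears beyond this routine calculation, and the argument is essentially tight: the assumption $r<1$ is used only to close the gap between the dominant $x^{-r}$ and the target $1/x$.
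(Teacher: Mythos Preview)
Your argument is correct and mirrors the paper's proof: both force a red path of length $m\approx r\log x/\log\log x$ along a single ray by demanding fast passage times and slow recoveries, obtaining a success probability of order $x^{-r+o(1)}\ge 1/x$ since $r<1$. The only differences are cosmetic---the paper bounds each edge passage time separately (rather than via a Gamma tail) and requires $B_{v_k}\ge 1$ at each step instead of conditioning on $\mathcal S$---and there is one small slip on your side: with $m=\lceil r\log x/\log\log x\rceil$ the inclusion $\mathcal E\subset\{H_{1-T}>r\log x/\log\log x\}$ can fail when $r\log x/\log\log x$ is itself an integer, so build a path of $m+1$ vertices (the asymptotics are unchanged).
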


\begin{proof}
Fix $x>e$ and define
\begin{equation}\label{eq:mx-and-px}
m_x\coloneqq \frac{r\log x}{\log\log x}\quad\text{and}\quad p_x:\R\rightarrow[0,1],~t\mapsto\Proba[H_t\leq m_x].
\end{equation}
Next, denote
\[
\mathcal M(x)\coloneqq \inf\left\lbrace 1-p_x(t),~t\in\left[1-\log 2,1\right]\right\rbrace.
\]
By standard facts we have that

\begin{align*}
 \Proba[H_{1-T}\leq m_x]  &= \E[p_x(1-T)\mathbf{1}_{T>\log 2}]+\E[p_x(1-T)\mathbf{1}_{T\leq\log 2}] \\
& \leq \P (T>\log 2) +\Proba[T\leq\log 2]\sup_{t\in [1-\log 2, 1]}p_x(t)
 = 1-\frac{\mathcal{M}(x)}{2}.
\end{align*}
\
Now, using the fact that $\log(1+z)\leq z$ for all $z>-1$, taking logarithms we obtain
\begin{equation}
    \label{eq_log_eta_H}
    \log\Proba[H_{1-T}\leq m_x] \leq \log \left ( 1-\frac{\mathcal M(x)}{2}\right )\leq-\frac{\mathcal{M}(x)}{2}.
\end{equation}
Now, for all $n\geq0$ we let the vertex $v_n$ correspond to the left-most individual of the $n$-th generation in the tree, and recall that $(\mathbf{C}_v)_{v\in V}$ is the sequence of recovery clocks.
Moreover, for all $v\in V$ let $B_v$ denote the number of children of vertex $v$. 
Set $T_o\coloneqq 0$ for convenience and notice that, for all $t\in[1-\log2, 1]$,
\[
\bigcap_{k=0}^{\lfloor m_x\rfloor}
\left\lbrace \mathbf{C}_{v_k}>1,~ B_{v_k}\geq1,~ T_{v_k}<\frac{1-\log 2}{m_x+1} \right\rbrace\subset\{H_t>m_x\}.
\]
Therefore, considering the corresponding probabilities,
\begin{equation}\label{eq:aux-Mhat}
\mathcal M(x)\geq \Proba[B\geq 1]^{m_x+1}e^{-\gamma(m_x+1)}\left(1-\exp\left[-\frac{1-\log 2}{m_x+1}\right]\right)^{m_x}.
\end{equation}
For the remainder of this proof, let 
\begin{equation}\label{eq:Mhat}
\widehat{\mathcal M}(x)\coloneqq  \Proba[B\geq 1]^{m_x+1}e^{-\gamma(m_x+1)}\left(1-\exp\left[-\frac{1-\log 2}{m_x+1}\right]\right)^{m_x}.
\end{equation}
Next, since $\log(1-e^{-z})\sim\log z$ as $z\rightarrow0^+$ together with \eqref{eq:mx-and-px}, we have
\[
\log\widehat{\mathcal M}(x)\sim m_x \log \left ( \frac{1-\log 2}{m_x+1}\right )\sim -m_x\log m_x\sim -r\log x\quad\text{as}\quad x\rightarrow+\infty.
\]
Thus, since $r<1$ we have that for all $x$ large enough
\[
\widehat{\mathcal M}(x)\geq 2\exp(-\log x).
\]
Finally, since $\widehat{\mathcal M}(x)\leq\mathcal M(x)$ (by \eqref{eq:Mhat} and \eqref{eq:aux-Mhat}), by plugging this inequality into (\ref{eq_log_eta_H}) we get
\[
\begin{split}
\log\Proba[H_{1-T}\leq m_x] \leq -\frac{\mathcal M(x)}{2} \leq -\frac{\widehat{\mathcal M}(x)}{2} \leq - \frac{1}{2}\, \frac{2}{x}=-\frac{1}{x}.
\end{split}
\]
By taking exponentials, we obtain the claimed result.
\end{proof}

\begin{Lemma}
\label{lemma_eta_bound_M}
Let $T\sim\mathrm{Exp}(1)$ be independent of the process 
and $r\in(0,1)$. Then for all $x>e^e$ large enough we have
\[
\Proba\left[M_{1-T}\leq\frac{r\log x}{\log\log\log x}\right]\leq e^{-1/x}.
\]
\end{Lemma}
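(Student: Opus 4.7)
The plan is to mirror the argument of Lemma~\ref{lemma_eta_bound_H}, but to witness the event $\{M_t>m_x\}$ via a shallow, heavily branching red \emph{subtree} of size $\lceil m_x\rceil+1$ in $G$, rather than via a long red path. This replaces the per-edge passage-time cost $\sim 1/m_x$ of the path construction by $\sim 1/\log m_x$, and is exactly what upgrades $\log\log x$ to $\log\log\log x$ in the denominator of the cutoff.

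Set $m_x\coloneqq \frac{r\log x}{\log\log\log x}$, $p_x(t)\coloneqq \Proba[M_t\leq m_x]$ and $\mathcal M(x)\coloneqq \inf_{t\in[1-\log 2,1]}(1-p_x(t))$. The split of $\Proba[M_{1-T}\leq m_x]$ according to $\{T>\log 2\}$ versus $\{T\leq\log 2\}$, followed by $\log(1+z)\leq z$, applies verbatim from Lemma~\ref{lemma_eta_bound_H} and gives $\log\Proba[M_{1-T}\leq m_x]\leq -\mathcal M(x)/2$; hence it suffices to show $\mathcal M(x)\geq 2/x$ for all $x$ large enough.

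Since $\E B>1$, there exists an integer $d\geq 2$ with $q\coloneqq \Proba[B\geq d]>0$. Fix such a $d$, set $n_x\coloneqq \lceil m_x\rceil+1$ and $D_x\coloneqq \lceil\log_d n_x\rceil$, and, after fixing an ordering of each vertex's children, let $T_x$ be the first $n_x$ vertices visited by the breadth-first exploration of $G$ that at every vertex follows only its first $d$ children. Then $T_x$ is a subtree of $G$ of depth at most $D_x$ whose existence only requires each vertex of $T_x$ to have at least $d$ children in $G$. For any $t\in[1-\log 2,1]$, on the intersection $\mathcal E_x$ of this structural event with
\[
\bigcap_{v\in T_x}\{\mathbf{C}_v>1\}\quad\text{and}\quad\bigcap_{v\in T_x\setminus\{o\}}\bigl\{T_v<\tfrac{1-\log 2}{D_x+1}\bigr\},
\]
every vertex of $T_x$ has reaching time at most $1-\log 2\leq t$ and is still red at time $1\geq t$, so all of $T_x$ lies inside a single red cluster at time $t$ and $M_t\geq n_x>m_x$. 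By independence of offspring counts, recovery clocks and passage times,
\[
\mathcal M(x)\geq q^{n_x}\,e^{-\gamma n_x}\,\bigl(1-e^{-(1-\log 2)/(D_x+1)}\bigr)^{n_x-1}.
\]

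Taking logarithms and using $\log(1-e^{-z})\sim\log z$ as $z\to 0^+$, the passage-time factor dominates and contributes $\sim -n_x\log D_x\sim -m_x\log\log m_x$, while the first two factors contribute only $O(m_x)$. Since $\log m_x\sim\log\log x$ and $\log\log m_x\sim\log\log\log x$, this yields $\log\mathcal M(x)\sim -m_x\log\log m_x\sim -r\log x$; because $r<1$, the bound $\mathcal M(x)\geq 2/x$ follows for $x$ large, as required. The delicate point I anticipate is the need to size $T_x$ at \emph{exactly} $\lceil m_x\rceil+1$ vertices rather than using a complete $d$-ary tree of depth $D_x$ (which would have $\sim d\,m_x$ vertices): only the former choice produces a coefficient $1$ in front of $r\log x$ and so closes the inequality $\mathcal M(x)\geq 2/x$ for the full range $r\in(0,1)$ rather than only for $r<1/d$.
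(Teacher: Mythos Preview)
Your proof is correct and follows essentially the same approach as the paper's own proof: reduce to bounding $\mathcal M(x)$ from below via an explicit event where a shallow branching subtree of size $\lceil m_x\rceil+1$ and depth $\sim\log m_x$ is entirely red on $[1-\log2,1]$, then use $\log(1-e^{-z})\sim\log z$ to obtain $\log\mathcal M(x)\sim -m_x\log\log m_x\sim -r\log x$. The paper simply fixes $d=2$ (the binary tree), whereas you allow any $d\geq2$ with $\Proba[B\geq d]>0$, and your explicit remark that one must take exactly $\lceil m_x\rceil+1$ vertices rather than the full $d$-ary tree of depth $D_x$ is a useful clarification that the paper leaves implicit.
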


\begin{proof}
We proceed as in the proof of Lemma \ref{lemma_eta_bound_H}. 
Fix $x>e^e$ and define
\[
m_x'\coloneqq \frac{r\log x}{\log\log\log x}\quad\text{and}\quad  p_x':\R\rightarrow[0,1], \ t\mapsto\Proba[M_t> m_x].
\]
Thus, by letting
$ \mathcal M'(x)\coloneqq \inf\left\lbrace p_x'(t),~t\in\left[1-\log 2,1\right]\right\rbrace $, we have
$ \log\Proba[M_{1-T}\leq m_x]\leq-\mathcal M'(x)/2$.

Let $\mathcal{V}_x$ denote the set of vertices corresponding to the first $1+\lfloor m_x'\rfloor$ individuals of the (deterministic) \emph{binary tree}. 
Now, since $G$ is supercritical, we can reason as follows.
As in the previous proof, for $v\in V$ let $B_v$ denote the number of children of $v$.
The probability that $G$ contains $\mathcal{V}_x$ is positive and, since the variables $\{B_v\}_v$ are i.i.d., it can be bounded by
\[
\P \left [ \bigcap_{v\in \mathcal{V}_x} \{B_v \geq 2\}\right ] = (1-\zeta(0)-\zeta(1))^{1+\lfloor m_x'\rfloor},
\]
where, as defined at the beginning of Section \ref{sect:GWtree}, $\zeta(k)=\P(B_v=k)$.
Furthermore, by definition of $\mathcal{V}_x$, we have that all such individuals are to be found within generation $\log_2(1+\lfloor m_x'\rfloor)$.

Now set $T_o\coloneqq 0$ for convenience.
For all $t\in[1-\log 2, 1]$, we have
\[
\bigcap_{v\in \mathcal{V}_x}\left\lbrace \mathbf{C}_v>1,~ B_v \geq 2,~T_v<\frac{1-\log 2}{ \log_{2}(m_x'+1)} \right\rbrace\subset\{M_t>m'_x\}.
\]
Therefore computing the probability,
\[
\mathcal M'(x)\geq \Proba[B_o \geq 2]^{m_x'+1}e^{-\gamma(m_x'+1)}\left(1-\exp\left[-\frac{1-\log 2}{ \log_{2}(m_x'+1)}\right]\right)^{m_x'}=:\widehat{\mathcal M}'(x).
\]
Next, by reasoning like in the previous lemma, we obtain (for all fixed $r<1$)
\[
\log \widehat{\mathcal M}'(x)\sim -m_x'\log\log m_x'\sim-r\log x\quad\text{as}\quad x\rightarrow+\infty,
\]
which implies the claim.
\end{proof}

Now we finally able to provide a proof of Theorem \ref{thm_supercritical}.

\begin{proof}[Proof of Theorem \ref{thm_supercritical}]
It suffices to show that, almost surely on $\mathcal S$, 
\[
\liminf_{t\rightarrow+\infty}\frac{H_t\log t}{t}\geq \boldsymbol{\alpha}\quad\text{and}\quad\liminf_{t\rightarrow+\infty}\frac{M_t\log\log t}{t}\geq \boldsymbol{\alpha}.
\]
We start with the first claim.
Fix $r\in (0,1)$ and for all $x\in (e,\infty)$ define $h(x)\coloneqq  \frac{r\log x}{\log\log x} $.
This function is non-decreasing and by Lemma \ref{lemma_eta_bound_H}, it satisfies the assumptions of Theorem \ref{thm_lower_bound} for $(\mathcal Q_t)_{t\in\R} \equiv (H_t)_{t\in\R}$ and thus for all $\beta\in(0, \boldsymbol{\alpha})$ we have
\[
\liminf_{t\to+\infty}\frac{H_t}{h(e^{\beta t})}\geq1,\quad\P_\mathcal{S}\text{-almost surely}.
\]
Simplifying the above expression we get $ \liminf_{t}\frac{H_t\log t}{t}\geq r\beta$, $\P_\mathcal{S}$-almost surely.
Then, by letting $\beta\rightarrow \boldsymbol{\alpha}$ and $r\rightarrow1$ we get the first part of the claim. 
The second part of the claim is obtained similarly, by using Lemma \ref{lemma_eta_bound_M} and Theorem \ref{thm_lower_bound} with $h$ so that, for all $x\in (e^e,\infty)$, $h(x)= \frac{r\log x}{\log\log\log x} $ and $(\mathcal Q_t)_{t\in\R} \equiv (M_t)_{t\in\R}$.
\end{proof}

\begin{Remark}\label{rem:auxiliary-new-theorem}
Observe that $\P_\mathcal S (\#A_t\rightarrow \infty)=1$,  and thus by Lemma \ref{lemma_controlled_growth},
\[
\theta_{\#A_t}\sim\frac{\log \#A_t}{\boldsymbol{\alpha}}\sim\frac{\log(\#A_t+1)}{\boldsymbol{\alpha}}\sim\theta_{\#A_t+1}.
\]
Then, since for all $t>0$ we have $\theta_{\#A_t}\leq t\leq\theta_{\#A_t+1}$, we deduce that for all $t$ large enough,
\begin{equation}\label{eq:aux-logAt}
\log\#A_t\sim \boldsymbol{\alpha} t,\quad \P_\mathcal{S}\text{-almost surely}.
\end{equation}
\end{Remark}

Finally we can provide a proof of Corollary \ref{coro_lb_gw_at}.

\begin{proof}[Proof of Corollary \ref{coro_lb_gw_at}]
By Remark \ref{rem:auxiliary-new-theorem} 
when $t\to \infty$ we obtain
\[
\frac{\log (\#A_t)}{\log \log (\#A_t)} \stackrel{\eqref{eq:aux-logAt} }{\sim} \frac{\boldsymbol{\alpha} t}{\log (\boldsymbol{\alpha} t)}
\sim \frac{\boldsymbol{\alpha} t}{\log t}, \quad \P_\mathcal{S}\text{-almost surely}.
\]
Finally, the result follows from Theorem \ref{thm_supercritical}.
\end{proof}
%
%

\section{Proof of Proposition \ref{prop:blue}}\label{section:blue}

The first part of the claim follows from Remark \ref{rem:auxiliary-new-theorem}.
In fact, since for all $t\geq 0$ we have $M_t\leq\#A_t$, the asymptotic given in (\ref{eq:aux-logAt}) implies that for all $\delta>0$, $\P_\mathcal{S}-$almost surely for all $t$ big enough
\[
\log M_t\leq \left(\boldsymbol{\alpha}+\frac{\delta}{2}\right)t.
\]
The remaining part of the proof of Proposition \ref{prop:blue} is split into Lemmas \ref{lemma:theta-t} and \ref{lemma:gammaTheta} below. 
In the following, let $\mathcal{B}_x(r)$ denote the ball (in the graph metric) of radius $r$ centered at $x$.

\begin{Lemma}\label{lemma:theta-t}
In the same setting as above there is a constant $\tilde{c}>0$ large enough, so that
\[
\P \left ( \limsup_{t\to+\infty} \bigl \{ A_t \subset \mathcal{B}_o(\bar{c}t)\bigr \} \right ) =1,
\]
and in particular for all $\gamma>0$,
$
\P \left ( \limsup_t \frac{ H_{t}}{t} \leq \bar{c} \right ) =1.
$
\end{Lemma}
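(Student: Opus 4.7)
I interpret the displayed equation as saying that, $\P$-almost surely, $A_t \subset \mathcal{B}_o(\bar{c} t)$ holds for all sufficiently large $t$ (the reading from which the $\limsup H_t/t \leq \bar c$ statement directly follows). The plan is a first-moment argument combining the mean growth rate of the Galton-Watson tree with a Gamma tail bound for sums of exponential passage times, followed by Borel-Cantelli along integer times and a monotone interpolation to all $t$.

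Since $t \mapsto A_t$ is non-decreasing, it suffices to show for some $c_0 > 0$ that almost surely $A_k \subset \mathcal{B}_o(c_0 k)$ for all sufficiently large integers $k$: then for $t \in [k,k+1]$ with $k \geq 1$ one has $A_t \subset A_{k+1} \subset \mathcal{B}_o(c_0(k+1)) \subset \mathcal{B}_o(2 c_0 t)$, and the claim holds with $\bar c := 2 c_0$. Fix $k \in \N^*$ and let $Z_n$ be the number of generation-$n$ vertices of $G$, so $\E[Z_n] = \m^n$. For any (potential) vertex $v$ at generation $n$, the reaching time $\tau_v$ equals the sum of $n$ i.i.d.\ $\mathrm{Exp}(1)$ passage times on the unique root-to-$v$ path, hence is $\mathrm{Gamma}(n,1)$-distributed, giving $\P(\tau_v \leq k) \leq k^n/n!$. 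Since the offspring variables defining $G$ are independent of the passage times, Markov's inequality combined with linearity of expectation yields
\[
\P\bigl(\exists v \in A_k : \mathrm{gen}(v) = n\bigr) \leq \E\bigl[\#\{v : \mathrm{gen}(v)=n,\,\tau_v \leq k\}\bigr] \leq \m^n \cdot \frac{k^n}{n!} \leq \left(\frac{e \m k}{n}\right)^n,
\]
using Stirling's bound $n! \geq (n/e)^n$.

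A union bound over $n > c_0 k$ then gives, provided $c_0 > e\m$,
\[
\P\bigl(A_k \not\subset \mathcal{B}_o(c_0 k)\bigr) \leq \sum_{n > c_0 k} \left(\frac{e \m}{c_0}\right)^n \leq \frac{\rho^{c_0 k}}{1 - \rho}, \qquad \rho := e \m / c_0 \in (0,1).
\]
This is summable in $k$, so the first Borel-Cantelli lemma together with the monotone interpolation above gives the first claim. For the second, any oriented red path in $G$ is a descending sub-path of a single root-to-leaf ray, hence has at most $\max_{v \in A_t} \mathrm{depth}(v) + 1$ vertices; on the almost-sure event $\{A_t \subset \mathcal{B}_o(\bar c t) \text{ for all large } t\}$, this yields $H_t \leq \bar c t + 1$ and thus $\limsup_t H_t/t \leq \bar c$.

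The only delicate step is the first-moment calculation, where one must carefully exploit the independence between the Galton-Watson offspring variables and the edge passage times to factor $\E[Z_n]$ out of $\P(\tau_v \leq k)$; everything else is either monotonicity or routine Borel-Cantelli. Note that $\bar c$ depends only on $\m = \E B$ (any value strictly larger than $2 e \m$ works with the crude interpolation above) and is uniform in the recovery rate $\gamma$, which is consistent with the hypothesis ``for all $\gamma > 0$'' since $A_t$ does not depend on the recovery clocks $(\mathbf{C}_v)_{v \in V}$.
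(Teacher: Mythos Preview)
Your proof is correct and follows essentially the same route as the paper: a first-moment bound combining $\E[Z_n]=\m^n$ with a tail estimate for the sum of $n$ i.i.d.\ $\mathrm{Exp}(1)$ variables, Borel--Cantelli along integer times, and a monotone interpolation to all $t$. The only cosmetic differences are that the paper bounds the Poisson tail via a Chernoff argument (using $\P(\tau_v\le n)=\P(\mathrm{Poi}(n)>\tilde c n)$) rather than your direct Gamma bound $\P(\tau_v\le k)\le k^n/n!$, and that the paper checks a single generation $\lfloor\tilde c n\rfloor+1$ (using that on a tree any deep vertex has an ancestor at that exact level) instead of summing over all $n>c_0k$; both yield the same threshold $c_0>e\m$ and the same structure.
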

\begin{proof}
To get the first part, we investigate what happens at integer times and then use the fact that $(A_t)_{t\geq 0}$ is non-decreasing. Let $\tilde{c}>1$ be a constant so large, that 
\begin{equation}\label{eq:cond-on-ctilde}
1+\tilde{c} \log \tilde{c} -\tilde{c}(1+\log \m)>0.
\end{equation}
Since 
$
\P \left (  A_n \not \subset \mathcal{B}_o(\tilde{c} n) \right ) 
= \E \left [ \P \left (  A_n \not \subset \mathcal{B}_o(\tilde{c} n) \mid G \right ) \right ] 
$,
we start by bounding the inner part of the expectation.
In the following, let $G_k$ denote generation $k$ of the tree $G$, thus
\[
\begin{split}
& \P \left (  A_n \not \subset \mathcal{B}_o(\tilde{c} n) \mid G \right ) 
= \P \left ( \exists x \in G_{\lfloor \tilde{c}n \rfloor +1} \ : \ \tau_x \leq n \mid G \right )
%
\leq \# G_{\lfloor \tilde{c}n \rfloor +1} \P(\text{Poi}(n) > \tilde{c}n),
\end{split}
\]
where the second-to-last inequality follows from a union bound and the following fact.
The probability that a vertex at distance $\ell$ from $o$ is reached by time $n$ coincides with the probability that within time $n$ one witnesses at least $\ell$ observations of a Poisson process of rate $1$.
In this case we have $\ell=\lfloor \tilde{c}n \rfloor +1>\tilde{c}n$.

By taking the expectation and recalling that $\m>1$ denotes the mean of the offspring distribution, the above calculations imply
\[
P \left (  A_n \not \subset \mathcal{B}_o(\tilde{c} n) \right )  \leq \m^{\lfloor \tilde{c}n \rfloor +1}\P(\text{Poi}(n) > \tilde{c}n).
\]
Now we reason as follows.
For any $\lambda>0$, Markov's inequality gives
\[
\begin{split}
& \P(\text{Poi}(n) > \tilde{c}n) = \P(e^{\lambda\text{Poi}(n)} > e^{\lambda \tilde{c}n})
\leq \frac{\E \left [ e^{\lambda\text{Poi}(n)}\right ]}{e^{\lambda \tilde{c}n}} = \exp \left \{-n \left (1+\tilde{c}\lambda-e^\lambda \right) \right \}.
\end{split}
\]
Since $\tilde{c}>1$ we can choose $\lambda=\log \tilde{c}$, obtaining
\[
P \left (  A_n \not \subset \mathcal{B}_o(\tilde{c} n) \right )  \leq \m \exp \left \{-n \left [1+\tilde{c} \log \tilde{c} -\tilde{c}(1 +\tilde{c}\log \m) \right] \right \}.
\]
By defining $\mathbf{c}_1\coloneqq  1+\tilde{c} \log \tilde{c} -\tilde{c}(1 +\log \m)$, by \eqref{eq:cond-on-ctilde} we obtain
\[
\sum_{n\geq 0}\P \left ( A_n \not \subset \mathcal{B}_o(\tilde{c}  n)\right ) \leq \m \sum_{n\geq 0}e^{-\mathbf{c}_1 n}<+\infty.
\]
Therefore, by the first Borel-Cantelli lemma, almost surely for all $n\in\N$ large enough we have $A_n \subset \mathcal{B}_o(\tilde{c} n)$ and in particular, fixing $\bar c>\tilde c$, for all $t\geq 0$ large enough we get
\[
A_t\subset A_{\lceil t\rceil}\subset \mathcal{B}_o(\tilde{c} \lceil t\rceil)\subset \mathcal{B}_o(\tilde{c} (t+1))\subset\mathcal{B}_o(\bar{c}t).
\]
Finally, since for all $t\geq 0$ $H_t$ cannot be larger than the maximal distance of any vertex in $A_t$ from the origin, we obtain the claimed result.
\end{proof}

Now, assuming that $G$ has bounded degree, we shall show that for all fixed $\varepsilon>0$, if the recovery rate $\gamma$ is large enough, then asymptotically as $t\to \infty$ the volume of the largest red cluster at time $t$ is at most $\varepsilon t$.
Recall that $\Delta$ denotes the maximal degree of $G$.

\begin{Lemma}\label{lemma:gammaTheta} 
Fix $\varepsilon>0$ arbitrarily small. Then there is a critical value $\gamma_c = \gamma_c(\Delta, \varepsilon, \tilde{c})$ so that for all $\gamma>\gamma_c$,
\[
\P \left ( \limsup_{t\to+\infty} \frac{M_{t}}{t}\leq \varepsilon \right )=1.
\]
\end{Lemma}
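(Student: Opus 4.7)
The plan is to couple the red process with a Bernoulli-type percolation so that every red cluster is contained in an open cluster, then confine those clusters to the ball $\mathcal{B}_o(\bar c t)$ given by Lemma \ref{lemma:theta-t}, and finally invoke the subcritical volume estimate of \cite{Ariascastro}. Concretely, I would fix the BFS tree of first-reached parents rooted at $o$ (so that each vertex $v\neq o$ has a unique parent $v^-$ with $\tau_v = \tau_{v^-}+T_v$, where $T_v\coloneqq T_{(v^-,v)}$) and declare the edge $\{v^-,v\}$ \emph{open} when $T_v < \mathbf{C}_{v^-}$. Each edge is open with probability $p\coloneqq (1+\gamma)^{-1}$, which can be made arbitrarily small by choosing $\gamma$ large. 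Siblings share their parent's clock, so adjacent edges are not independent, but conditional on $\mathbf{C}_u$ the open-child indicators at a vertex $u$ are i.i.d.\ Bernoulli; hence the open subgraph is dominated by a branching process with mean offspring at most $\Delta\,p$.

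Next, if $u=v^-$ and both $u,v$ are red at some time $t$, then $\tau_v = \tau_u+T_v \leq t < \tau_u + \mathbf{C}_u$, forcing $T_v < \mathbf{C}_u$; that is, the edge $\{u,v\}$ is open. Therefore every red cluster at time $t$ is a subset of a single open cluster. Combined with Lemma \ref{lemma:theta-t}, almost surely for every $t$ large enough, every red cluster at time $t$ is contained in an open cluster rooted at some vertex of $\mathcal{B}_o(\bar c t)$, and the ball itself has volume at most $\Delta^{\bar c t}$.

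I would then apply the bound of \cite{Ariascastro}: for $p$ sufficiently small, the volume of the open cluster of any given vertex $x$ within the ball of radius $\ell$ around $x$ is at most $c(p)\,\ell$, except on an event of probability at most $e^{-c'(p)\ell}$, where $c(p)\downarrow 0$ and $c'(p)\uparrow\infty$ as $p\downarrow 0$. I would then choose $\gamma_c = \gamma_c(\Delta,\varepsilon,\bar c)$ so that $p=(1+\gamma)^{-1}$ satisfies $c(p)\bar c \leq \varepsilon$ and $c'(p) > \bar c \log\Delta + 1$. Taking $\ell=\bar c t$ and a union bound over the $\leq\Delta^{\bar c t}$ potential starting vertices in $\mathcal{B}_o(\bar c t)$ gives
\[
\P\bigl(\text{some open cluster in }\mathcal{B}_o(\bar c t)\text{ has volume }>\varepsilon t\bigr) \leq \Delta^{\bar c t}\,e^{-c'(p)\bar c t}\leq e^{-\bar c t}.
\]
Evaluating along integer times and using monotonicity of $\mathcal{B}_o(\bar c t)$ in $t$, the first Borel--Cantelli lemma then yields $\limsup_t M_t/t \leq \varepsilon$ almost surely.

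The main obstacle is the lack of independence among sibling edges of the coupling, which requires conditioning on the parental recovery clocks before invoking any subcritical percolation estimate; a secondary difficulty is to verify that \cite{Ariascastro} provides simultaneously a vanishing volume coefficient $c(p)$ and an exponential decay rate $c'(p)$ exceeding $\bar c \log\Delta$ as $p\downarrow 0$, since both features are needed to beat the exponential volume growth of the ball $\mathcal{B}_o(\bar c t)$.
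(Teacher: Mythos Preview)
Your overall strategy---couple red clusters with a subcritical percolation, confine to $\mathcal{B}_o(\bar c t)$ via Lemma~\ref{lemma:theta-t}, then invoke \cite{Ariascastro}---is exactly the paper's. The observation that if $u=v^-$ and both are red at time $t$ then $T_v<\mathbf{C}_u$ is correct and is the heart of the coupling in both arguments.

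The acknowledged gap, however, is real and is precisely what the paper sidesteps. Your edge percolation has positively correlated sibling edges (they share $\mathbf{C}_{v^-}$), so \cite{Ariascastro}'s result for i.i.d.\ percolation does not apply, and ``conditioning on parental clocks'' does not fix this without reproving the volume estimate from scratch. The paper avoids the issue entirely by switching to \emph{site} percolation: declare $v$ open when $\mathbf{C}_v>\min_{w\in\partial^*\{v\}}T_w$, an event of probability at most $p(\gamma)=\Delta/(\gamma+\Delta)$. Since this event depends only on $\mathbf{C}_v$ and on the passage times of edges emanating from $v$ towards its children, the indicators are genuinely independent across vertices, and one lands in honest Bernoulli site percolation on $\mathbb{T}_\Delta$. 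The coupling with the red process is then that any red vertex in $A_t\setminus\partial A_t$ is automatically open (all its children have been reached, hence $T_w\le t-\tau_v<\mathbf{C}_v$), so the largest red cluster off the boundary is bounded by the largest open cluster in $\mathbb{T}_\Delta(t)$; a crude $M_t\le\Delta M'_t+1$ handles the boundary.

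There is a second, smaller divergence. You assume \cite{Ariascastro} yields an exponential tail bound with tunable rate $c'(p)\to\infty$, which you then need to beat the entropy $\bar c\log\Delta$ in a union bound along integer times. The paper instead invokes \cite[Theorem~2]{Ariascastro} directly: almost surely the largest open cluster meeting the ball of radius $\ell$ has size $\sim \ell/\log_\Delta(1/\kappa)$, with $\kappa=\kappa(p)$ explicit. This gives the result in one line (choose $\gamma$ so that $\Delta\bar c/\log_\Delta(1/\kappa)\le\varepsilon$) without any Borel--Cantelli step or verification of decay rates.
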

\begin{proof}
In this proof we shall use a percolation argument.
Since the maximal degree of $G$ is $\Delta$, then $G$ is a subgraph of the regular tree of degree $\Delta$.
We denote by $\mathbb{T}_{\Delta}$ such a tree.
Furthermore, for all $t\geq 0$ set 
\[
\mathbb{T}_{\Delta}(t)\coloneqq \{x\in \mathbb{T}_{\Delta} \ : \ d(o,x)\leq \bar{c}t\},
\]
i.e., the set of nodes of $\mathbb{T}_{\Delta}$ up to generation $\lfloor\bar{c}t\rfloor$.

Since the minimum of $\Delta$ independent Exp($1$) random variables is distributed as an Exp($\Delta$), for all $v\in V$ the probability that $v$ activates a seed at any of its neighbors before recovering is
\[
\P\left(\mathbf{C}_v>\min_{w\in\partial^*\{v\}} T_{w}\right) \leq \frac{\Delta}{\gamma + \Delta},
\]
where the minimum is taken over all vertices $w$ that are (direct) descendants of $v$.
Now define
\[
p(\gamma)\coloneqq \frac{\Delta}{\gamma + \Delta},
\]
and consider Bernoulli site percolation on $\mathbb{T}_{\Delta}$ of parameter $p(\gamma)$. Clearly, by taking $\gamma$ large enough, we can make $p(\gamma)$ as small as we wish. Let $\gamma$ be so large that we are in the sub-critical regime (i.e. \ there is almost surely no infinite open cluster) and such that if we set
\[
\kappa\coloneqq p(\gamma)\bigl (1-p(\gamma) \bigr )^{\Delta-1}\frac{\Delta^{\Delta}}{(\Delta-1)^{\Delta-1}},
\]
then we have
\begin{equation}\label{eq:conditionONgamma}
\frac{\Delta \, \bar{c}}{\log_{\Delta}(1/\kappa)} \leq\varepsilon.
\end{equation}
At this point, let $\mathcal{K}_t$ denote the size of largest open cluster containing a vertex of $\mathbb{T}_{\Delta}(t)$.
Then the main tool towards the sought conclusion is \cite[Theorem 2]{Ariascastro} which states that 
\begin{equation}\label{eq:Ariascastro}
\P \left ( \frac{\mathcal{K}_t}{\lfloor\bar{c}t\rfloor}\to \frac{1}{\log_{\Delta}(1/\kappa)} \right )=1.
\end{equation}
We now define a coupling between the above site percolation process and FPPHE with recovery as follows. 
Since the probability that any vertex of $G$ activates a seed at any of its neighbors is at most $p(\gamma)$, we can declare each vertex that does so to be \emph{open}.

For $t\geq 0$, let $M'_t$ denote the size of the largest red cluster in $A_t\setminus\partial A_t$, where $\partial A_t$ denotes the internal boundary of $A_t$ (i.e., all $v\in A_t$ that have a neighbor outside $A_t$). 
Since the degree of $G$ is at most $\Delta$, one has $M_t\leq\Delta M'_t+1$. 
The ``+1'' is needed if $M'_t=0$. 
In fact, it might happen that red vertices are only present on $\partial A_t$, since they would be isolated red vertices, the largest red cluster has cardinality $1$.
Moreover, since $\mathcal{B}_o(\tilde{c} (t+1)) \subset \mathbb{T}_{\Delta}(t)$, then by Lemma \ref{lemma:theta-t} we have that almost surely $A_t\subset \mathbb{T}_{\Delta}(t)$ for all $t$ large enough, thus $M'_t\leq \mathcal{K}_t$ by construction of the coupling. Summing up, almost surely for all $t$ large enough,
$ M_t \leq \Delta \,\mathcal{K}_t+1 $.

Finally, this implies the claim, as 
$\frac{\Delta\,\mathcal{K}_t+1}{t} \to \frac{\Delta \, \bar{c}}{\log_{\Delta}(1/\kappa)} \leq\varepsilon$ almost surely by \eqref{eq:Ariascastro} and \eqref{eq:conditionONgamma}.
\end{proof}

\appendix
\section{Alternative proof of Theorem \ref{thm_critical}(ii)}\label{sect:appendix}

In Subsection \ref{subsect:line-liminf} we presented a short proof for the liminf of Theorem \ref{thm_critical}.
However, the result can be obtained via another proof that we believe to be of independent interest, hence we present it below.

We need to introduce some notation.

\begin{Definition}
\label{def_complete_recovery}
For all $n\geq0$ and $v\in\range{1}{n}$, define the events
\[
E_v^n\coloneqq \left\lbrace \mathbf{C}_v>\sum_{k=v+1}^{n+1} T_k\right\rbrace,
\]
namely, $E_v^n$ corresponds to the event that $v$ does not recover before the reaching time of vertex $n+1$.
Moreover, define the \emph{complete recovery probability of order $n$} as the probability that all vertices in $\range{1}{n}$ have recovered before time $\tau_{n+1}$ i.e.,
\[
\nu_n\coloneqq \P \left ( \bigcap_{v=1}^n ({E_v^n})^c \right )=1-\Proba\left[\bigcup_{v=1}^n {E_v^n}\right].
\]
\end{Definition}

Recall the definition of $R_t$ from \eqref{eq:def-R_t}; then, $E_v^n=\{v\in R_{\tau_{n+1}}\}$. 
As a consequence, for all $n\geq0$,
\[
\nu_n=\Proba(R_{\tau_{n+1}}=\{n+1\}).
\]
The next lemma gives a formula that will be used to obtain the asymptotic behavior of $(\nu_n)_{n\geq 1}$. 

\begin{Lemma}
\label{lemma_complete_recovery}
For all $n\geq 1$ and $\ell\geq 1$, let
\[
N_\ell^n\coloneqq \left\lbrace\mathbf x\in ({\N^*})^\ell:\sum_{k=1}^\ell \mathbf x_k\leq n\right\rbrace \quad \text{and}\quad S_\ell(n)\coloneqq \sum_{\mathbf x\in N_\ell^n}\prod_{k=1}^\ell(1+k\gamma)^{-\mathbf x_k}.
\]
Then for all $n\geq 1$ we have
\begin{equation}
    \label{eq_nu_n}
    \nu_n=1+\sum_{\ell=1}^n (-1)^\ell S_\ell(n).
\end{equation}
\end{Lemma}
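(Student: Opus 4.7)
The plan is to prove the identity by inclusion--exclusion applied to the union $\bigcup_{v=1}^n E_v^n$ and then to recognize the resulting sum as $S_\ell(n)$ via a bijection between $\ell$-subsets of $\range{1}{n}$ and the set $N_\ell^n$ of compositions with parts summing to at most $n$.

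First, I would write
\[
\nu_n = 1 - \Proba\left[\bigcup_{v=1}^n E_v^n\right] = 1 + \sum_{\ell=1}^n (-1)^\ell \sum_{1\leq v_1<\cdots<v_\ell\leq n}\Proba\left[\bigcap_{i=1}^\ell E_{v_i}^n\right],
\]
so it suffices to show that for every $\ell\in\range{1}{n}$,
\[
\sum_{1\leq v_1<\cdots<v_\ell\leq n}\Proba\left[\bigcap_{i=1}^\ell E_{v_i}^n\right]=S_\ell(n).
\]

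Next, I would compute $\Proba\bigl[\bigcap_{i=1}^\ell E_{v_i}^n\bigr]$ for a fixed tuple $v_1<\cdots<v_\ell$. Conditioning on the passage times $(T_k)_{k\geq 2}$ and using the independence of the recovery clocks $\mathbf{C}_{v_i}\sim\mathrm{Exp}(\gamma)$,
\[
\Proba\left[\bigcap_{i=1}^\ell E_{v_i}^n\,\Big|\,(T_k)_k\right] = \prod_{i=1}^\ell \exp\left(-\gamma\sum_{k=v_i+1}^{n+1} T_k\right)=\exp\left(-\gamma\sum_{k=2}^{n+1} m_k T_k\right),
\]
where $m_k\coloneqq \#\{i:v_i<k\}$. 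Taking expectation and using the moment generating function of $\mathrm{Exp}(1)$ (exploiting independence of the $T_k$),
\[
\Proba\left[\bigcap_{i=1}^\ell E_{v_i}^n\right]=\prod_{k=2}^{n+1}\frac{1}{1+\gamma m_k}.
\]

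The final step is the bookkeeping. For $j\in\range{0}{\ell}$, one has $m_k=j$ precisely for $k$ in a consecutive block determined by $v_j$ and $v_{j+1}$ (with the conventions $v_0\coloneqq 0$, $v_{\ell+1}\coloneqq n+1$): more precisely, $m_k=j$ iff $v_j<k\leq v_{j+1}$. The contribution of $j=0$ is trivial, and setting $\mathbf{x}_j\coloneqq v_{j+1}-v_j$ for $j\in\range{1}{\ell-1}$ and $\mathbf{x}_\ell\coloneqq n+1-v_\ell$ yields
\[
\Proba\left[\bigcap_{i=1}^\ell E_{v_i}^n\right]=\prod_{j=1}^\ell(1+j\gamma)^{-\mathbf{x}_j}.
\]
The map $(v_1,\dots,v_\ell)\mapsto(\mathbf{x}_1,\dots,\mathbf{x}_\ell)$ is a bijection from $\{1\leq v_1<\cdots<v_\ell\leq n\}$ onto $N_\ell^n$, since $\mathbf{x}_j\geq 1$ and $\sum_{j=1}^\ell \mathbf{x}_j=n+1-v_1\in\range{1}{n}$. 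Summing over the bijection gives exactly $S_\ell(n)$, completing the proof.

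The only delicate point is the change of variables step: one has to choose the right indexing (including the convention $v_{\ell+1}=n+1$) so that the exponents match the parts of a composition in $N_\ell^n$ without double-counting. Once that correspondence is set up correctly, the rest is a routine MGF computation combined with inclusion--exclusion.
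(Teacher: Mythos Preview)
Your proof is correct and follows essentially the same approach as the paper: inclusion--exclusion on $\bigcup_{v} E_v^n$, conditioning on the passage times and using the MGF of $\mathrm{Exp}(1)$, then a bijection between $\ell$-subsets of $\range{1}{n}$ and $N_\ell^n$. Your parametrization $\mathbf{x}_j=v_{j+1}-v_j$ (with $v_{\ell+1}=n+1$) is a slightly different but equivalent bookkeeping to the paper's map $\mathbf\Psi_\ell$, and your intermediate use of the multiplicity $m_k=\#\{i:v_i<k\}$ is a clean way to regroup the exponent.
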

In words, $N_\ell^n$ is the set of all vectors of length $\ell$, so that the sum of the entries is at most $n$.
$S_\ell(n)$ is a quantity related to the probability that all vertices in any set of cardinality $\ell \in \{1, 2, \ldots , n\}$ recover before $\tau_{n+1}$.
Its meaning will become clear in the proof.

\begin{proof}[Proof of Lemma \ref{lemma_complete_recovery}]
Let $n\geq 1$, and for $k\in \{1, \ldots ,n\} $ let $\mathcal{B}_k$ denote any subset of $\range{1}{n}$ of cardinality $k$; for $k=0$ set $\mathcal{B}_0\coloneqq  \emptyset$.
Then, by the inclusion-exclusion principle applied to $\Proba\left[\bigcup_{v=1}^n {E_v^n}\right]$,
\[
\nu_n = 1-\Proba\left[\bigcup_{v=1}^n {E_v^n}\right] = 1- \sum_{k=1}^n (-1)^{k+1}\, \P \left[\bigcap_{v\in \mathcal{B}_k} E_v^n\right] = 1+\sum_{k=1}^n (-1)^{k}\, \P \left[\bigcap_{v\in \mathcal{B}_k} E_v^n\right].
\]
For simplicity, 
we re-write the above as 
\begin{equation}\label{formula_nu_n}
\nu_n=1+\sum_{A\subset\range{1}{n},~A\neq \emptyset}(-1)^{\# A} \,  \Proba\left[\bigcap_{v\in A} E_v^n\right].
\end{equation}
Now fix any subset $A\subset\range{1}{n}$, define $\ell\coloneqq \#A\geq 1$  and let $(a_i)_{i\in\range{1}{\ell }}$ denote the elements of $A$ in \emph{increasing} order. 
(Note that $A$ is not necessarily connected.)
For convenience, define $a_{\ell +1}\coloneqq n+1$.

For all $\ell\geq 1$ (and hence for all $A\subset\range{1}{n}$) fixed, we now compute $\P \left[ \bigcap_{v\in A} E_v^n \right]$.
Start by observing that by independence of the recovery clocks $\{\mathbf{C}_j \}_j$ we have
\begin{equation}\label{eq:aux-clocks}
\P \left ( \bigcap_{v\in A} E_v^n \mid \{T_k\}_k\right ) = \prod_{j=1}^\ell \P \left ( \mathbf{C}_{a_j}>\sum_{k=a_j+1}^{n+1} T_k \mid \{T_k\}_k\right ) 
= \prod_{j=1}^\ell \exp\left(-\gamma\sum_{k=a_j+1}^{n+1} T_k\right).
\end{equation}
Therefore,
\[
\begin{split}
\P \left[ \bigcap_{v\in A} E_v^n \right] & 
= \E \left [ \P \left ( \bigcap_{v\in A} E_v^n \mid \{T_k\}_k\right ) \right ] 
\ \stackrel{\eqref{eq:aux-clocks} }{=} \ \E\left[\prod_{j=1}^\ell \exp\left(-\gamma\sum_{k=a_j+1}^{n+1} T_k\right)\right]\\
& = \E\left[\exp\left(-\gamma\sum_{j=1}^{\ell} \Bigl ( j\sum_{k=a_j+1}^{a_{j+1}} T_k\Bigr ) \right)\right] 
= \prod_{j=1}^\ell \prod_{k=a_j+1}^{a_{j+1}}\int_{\R_{\geq 0}}e^{-\gamma jt}e^{-t}\mathrm dt ,
\end{split}
\]
where the last equality follows from independence of the recovery clocks and of the passage times.
By solving the integral one obtains
\begin{equation}\label{eq:P-cap}
\P \left[ \bigcap_{v\in A} E_v^n \right]  =\prod_{j=1}^\ell  (1+j\gamma)^{-(a_{j+1}-a_j)} .
\end{equation}
Next, we define a function $\mathbf\Psi_\ell$ from $N_\ell^n$ to 
the collection of all subsets of cardinality $\ell$ contained in $\range{1}{n}$ as follows.
For any fixed $\mathbf{x}=(\mathbf{x}_1, \ldots , \mathbf{x}_{\ell})\in N_\ell^n$ let 
\[
\mathbf\Psi_\ell (\mathbf x) \coloneqq  \left \{n+1-\mathbf{x}_1, ~ n+1-\mathbf{x}_1-\mathbf{x}_2, ~\ldots , ~ n+1-\sum_{i=1}^\ell \mathbf{x}_i \right \}.
\]
This defines a one-to-one correspondence.
By construction, for all 
$\mathbf x\in N_\ell^n$ equation \eqref{eq:P-cap} gives
\[
\P \left[ \bigcap_{v\in \mathbf\Psi_\ell(\mathbf x)} E_v^n \right] =\prod_{k=1}^\ell (1+k\gamma)^{-\mathbf x_k}.
\]
Finally, since $\mathbf\Psi_\ell $ defines a one-to-one correspondence, we obtain \eqref{eq_nu_n} by replacing $A\subset\range{1}{n}$ by $\mathbf\Psi_\ell (\mathbf x)$ in (\ref{formula_nu_n}) for every $\ell \in\range{1}{n}$ and $\mathbf x\in N_\ell^n$.
\end{proof}

The next result shows that $(\nu_n)_{n\geq1}$ converges to a positive value, which we compute explicitly.

\begin{Proposition}
\label{prop_limit_nu_n}
The sequence $(\nu_n)_{n\geq 1}$ converges to $e^{-1/\gamma}$.
\end{Proposition}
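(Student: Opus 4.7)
The plan is to take the limit termwise inside the alternating sum in \eqref{eq_nu_n}, and to recognize the resulting series as the Taylor expansion of $e^{-1/\gamma}$. The only nontrivial point is justifying the interchange of $\lim_{n\to\infty}$ with $\sum_{\ell}$, which will follow from dominated convergence on the counting measure once a summable envelope for $S_\ell(n)$ is produced.

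First I would compute the termwise limit $S_\ell(\infty)\coloneqq \lim_{n\to\infty} S_\ell(n)$. By monotone convergence (the summand is non-negative and $N_\ell^n \uparrow (\N^*)^\ell$ as $n\to\infty$), the constraint $\sum_k \mathbf{x}_k\leq n$ disappears in the limit and the sum factorizes:
\[
S_\ell(\infty)=\prod_{k=1}^\ell \sum_{x=1}^{\infty} (1+k\gamma)^{-x}=\prod_{k=1}^\ell \frac{(1+k\gamma)^{-1}}{1-(1+k\gamma)^{-1}}=\prod_{k=1}^\ell \frac{1}{k\gamma}=\frac{1}{\ell!\,\gamma^\ell}.
\]
Since $N_\ell^n\subset (\N^*)^\ell$, the monotonicity $S_\ell(n)\leq S_\ell(\infty)=\frac{1}{\ell!\gamma^\ell}$ also holds for every $n$.

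Next I would rewrite $\nu_n-1$ as a sum over all $\ell\geq 1$ by setting $S_\ell(n)=0$ for $\ell>n$ (since a composition into $\ell$ positive parts requires at least $\ell$ units). Writing $a_{\ell,n}\coloneqq (-1)^\ell S_\ell(n)\mathbf{1}_{\{\ell\leq n\}}$, we have $|a_{\ell,n}|\leq \frac{1}{\ell!\gamma^\ell}$ for all $n$, and $\sum_{\ell\geq 1}\frac{1}{\ell!\gamma^\ell}=e^{1/\gamma}-1<+\infty$. By the dominated convergence theorem (applied to counting measure on $\N^*$),
\[
\lim_{n\to\infty}(\nu_n-1)=\lim_{n\to\infty}\sum_{\ell=1}^\infty a_{\ell,n}=\sum_{\ell=1}^\infty (-1)^\ell S_\ell(\infty)=\sum_{\ell=1}^\infty \frac{(-1)^\ell}{\ell!\gamma^\ell}=e^{-1/\gamma}-1,
\]
so $\nu_n\to e^{-1/\gamma}$, as claimed.

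I do not expect any serious obstacle here: the product structure of $S_\ell(\infty)$ is dictated by the factorization of the summand in Lemma~\ref{lemma_complete_recovery}, and the bound $S_\ell(n)\leq S_\ell(\infty)$ is just monotonicity. The only thing to be a bit careful about is checking that the dominant $\frac{1}{\ell!\gamma^\ell}$ holds uniformly in $n$ (so that dominated convergence applies without having to track the alternating cancellations in \eqref{eq_nu_n} by hand).
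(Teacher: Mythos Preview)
Your proposal is correct and follows essentially the same approach as the paper: compute the termwise limit $S_\ell(\infty)=\frac{1}{\ell!\gamma^\ell}$ via factorization, use the uniform bound $S_\ell(n)\leq S_\ell(\infty)$ as a summable envelope, and apply dominated convergence with respect to counting measure to pass the limit inside the alternating sum. The paper's proof is virtually identical, differing only in minor notation.
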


\begin{proof}
Fix an arbitrary $\ell \geq 1$ and recall the definition of $S_\ell(n)$ from Lemma \ref{lemma_complete_recovery}.
Define
\[
S_\ell^{\infty}\coloneqq \sum_{ \mathbf x\in(\N^*)^\ell }\prod_{k=1}^\ell(1+k\gamma)^{-\mathbf x_k}.
\]
Then, since $S_\ell(n)$ is a series with positive terms, we have $S_\ell(n)\rightarrow S_\ell^{\infty}$ as $n\to \infty$.
Observe that since every entry of $\mathbf{x}$ varies in $\N^*$, we have
\[
\sum_{ \mathbf x\in(\N^*)^\ell }\prod_{k=1}^\ell(1+k\gamma)^{-\mathbf x_k}=\prod_{k=1}^\ell\sum_{j=1}^{+\infty}\left(\frac{1}{1+k\gamma}\right)^j,
\]
which gives that for all fixed $\ell\geq 1$, one has  $S_\ell^{\infty} = \prod_{k=1}^\ell\frac{1}{k\gamma}=\frac{1}{\ell!\gamma^\ell} $.

Now note that for all fixed $n\geq 1$ and $\ell \geq 1$ we have $S_\ell(n)\leq S_\ell^\infty $ (which is summable, as $\sum_{\ell\geq 0} S_\ell^{\infty}=e^{1/\gamma}$) then, by Lemma \ref{lemma_complete_recovery} and the dominated convergence theorem with respect to the counting measure, we deduce that
\[
\lim_{n\rightarrow \infty}\nu_n
=\lim_{n\rightarrow \infty}\sum_{\ell\geq 0}(-1)^\ell S_\ell(n)
=\sum_{\ell\geq 0}(-1)^\ell S_\ell^\infty
=\sum_{\ell\geq 0}\frac{(-1/\gamma)^\ell}{\ell!}=e^{-1/\gamma},
\]
as claimed.
\end{proof}

The result below is an intermediate step, as it gives a liminf over a sub-sequence of times, namely the random times $(\tau_n)_{n\geq 1}$.
Clearly, $\liminf_{n\rightarrow \infty} H_{\tau_n}\geq \liminf_{t\rightarrow \infty} H_t$.


\begin{Theorem}
\label{thm_recovery_io}
We have  $\P(\{R_{\tau_n}=\{n\}\}, \text{ i.o.})=1$.
\end{Theorem}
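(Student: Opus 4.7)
The plan is to apply the Kochen--Stone lemma to the sequence of events $\mathcal R_n \coloneqq \{R_{\tau_n}=\{n\}\}$. By Definition \ref{def_complete_recovery} one has $\P(\mathcal R_n)=\nu_{n-1}$, and Proposition \ref{prop_limit_nu_n} gives $\nu_{n-1}\to e^{-1/\gamma}>0$; in particular $\sum_n \P(\mathcal R_n)=+\infty$ and the partial sums satisfy $S_N\coloneqq\sum_{n=1}^N\P(\mathcal R_n)\sim N e^{-1/\gamma}$.

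The crucial step is a clean formula for the pairwise intersections. Writing
\[
\mathcal R_n=\bigcap_{v=1}^{n-1}\Bigl\{\mathbf C_v\leq\sum_{k=v+1}^n T_k\Bigr\},
\]
fix $1\le n<m$. Since $\sum_{k=v+1}^n T_k\le\sum_{k=v+1}^m T_k$, for every $v\in\range{1}{n-1}$ the condition defining $\mathcal R_m$ is automatically satisfied on $\mathcal R_n$, so
\[
\mathcal R_n\cap\mathcal R_m=\mathcal R_n\cap\bigcap_{v=n}^{m-1}\Bigl\{\mathbf C_v\leq\sum_{k=v+1}^m T_k\Bigr\}.
\]
The first factor depends only on $(\mathbf C_v)_{v=1}^{n-1}$ and $(T_k)_{k=2}^{n}$, while the second depends only on $(\mathbf C_v)_{v=n}^{m-1}$ and $(T_k)_{k=n+1}^{m}$. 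Independence of these two collections, together with a shift of indices that matches the definition of $\nu_{m-n}$, yields the decoupling identity
\[
\P(\mathcal R_n\cap\mathcal R_m)=\nu_{n-1}\,\nu_{m-n}.
\]

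Plugging this formula into the Kochen--Stone quotient,
\[
T_N\coloneqq\sum_{n,m=1}^N\P(\mathcal R_n\cap\mathcal R_m)=S_N+2\sum_{n=1}^{N-1}\nu_{n-1}\sum_{d=1}^{N-n}\nu_d.
\]
Since the Ces\`aro average $\tfrac{1}{k}\sum_{d=1}^k\nu_d$ also tends to $e^{-1/\gamma}$, splitting the outer sum (for instance at $n=N^{2/3}$) and using dominated convergence in each piece gives $T_N\sim e^{-2/\gamma}N^2$. Combined with $S_N^2\sim e^{-2/\gamma}N^2$ this yields $\liminf_{N}S_N^2/T_N\ge 1$, and the Kochen--Stone lemma then produces $\P(\mathcal R_n\text{ i.o.})=1$, which is the claimed statement.

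The substance of the argument is really the decoupling identity $\P(\mathcal R_n\cap\mathcal R_m)=\nu_{n-1}\nu_{m-n}$: it is this \emph{exact} factorization (as opposed to a mere inequality) that ensures the Kochen--Stone quotient $S_N^2/T_N$ tends to $1$, which is precisely what is needed to upgrade the conclusion from ``positive probability'' to ``probability one''. The remaining asymptotics of $S_N$ and $T_N$ are routine consequences of $\nu_d\to e^{-1/\gamma}$, so I do not expect any further obstacle.
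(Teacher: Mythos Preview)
Your argument is correct, but it follows a genuinely different route from the paper's. The paper first uses Fatou's lemma together with Proposition~\ref{prop_limit_nu_n} to obtain $\P(\limsup_n\{R_{\tau_n}=\{n\}\})\ge e^{-1/\gamma}>0$, and then upgrades this to probability one via Kolmogorov's $0$--$1$ law: the key observation is that for every fixed $k$ the first $k-1$ vertices have all recovered from some finite time on, so the limsup event coincides with $\limsup_n\{R_{\tau_n}\setminus\range{1}{k-1}=\{n\}\}$, which is $\sigma((\mathbf C_j)_{j\ge k},(T_{j+1})_{j\ge k})$--measurable; hence the event lies in the tail $\sigma$--algebra. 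Your approach instead exploits the exact renewal-type identity $\P(\mathcal R_n\cap\mathcal R_m)=\nu_{n-1}\nu_{m-n}$ and feeds it into Kochen--Stone, which is more computational but entirely self-contained and yields the extra quantitative information on pairwise correlations. The paper's proof is shorter and more conceptual (one line of Fatou plus tail-measurability), while yours avoids having to recognise any $0$--$1$ structure at the cost of a routine second-moment computation; both rely on Proposition~\ref{prop_limit_nu_n} in the same way.
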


\begin{proof}
A straightforward consequence of Proposition \ref{prop_limit_nu_n} and Fatou's lemma is 
\begin{equation}\label{eq:P(E1)gamma}
\P \left ( \limsup_{n\to+\infty} \, \{R_{\tau_n}=\{n\}\} \right ) \geq  \limsup_{n\to+\infty} \P(R_{\tau_n}=\{n\})  
= \lim_{n\to+\infty} \nu_n = e^{-1/\gamma}>0.
\end{equation}
We shall use this fact to deduce the statement from the Kolmogorov 0-1 law.
Now we condition on the event that for all $k\geq 1$, $T_{k+1}<+\infty$ and $\mathbf{C}_k<+\infty$ as well as $\tau_n\to+\infty$; since this occurs almost surely (cf.\ Lemma \ref{lemma_fpp}), it is not a real restriction. 
For all $m\geq 1$ let
\[
\mathcal{T}_m \coloneqq \sigma \left ( (\mathbf{C}_j)_{j\geq m}, \, (T_{j+1})_{j\geq m} \right ) \quad \text{and} \quad \mathcal{T}\coloneqq  \bigcap_{m\geq 1} \mathcal{T}_m
\]
denote the $\sigma$-algebra generated by all recovery clocks and passage times starting from vertex $m$ and the corresponding tail-$\sigma$-algebra, respectively. Then observe that for all $k\geq 1$ we have
\begin{equation}\label{eq:limsup_shift}
\limsup_{n\to+\infty} \, \{R_{\tau_n}=\{n\}\}=\limsup_{n\to+\infty} \, \{R_{\tau_n}\setminus\range{1}{k-1}=\{n\}\}.
\end{equation}
This follows from the fact that, for all fixed $k$, the first $k-1$ vertices will eventually recover.
(Recall that we are working under the assumption that all recovery times are finite.) 
Formally, denoting
\[
t_0\coloneqq \max_{i\in\range{1}{k-1}}(\tau_i+\mathbf C_i),
\]
we have $\tau_n>t_0$ for all $n$ large enough since $t_0<+\infty$ and $\tau_n\to+\infty$. 
Therefore since by construction after time $t_0$ the first $k-1$ vertices are recovered, this implies that $R_{\tau_n}=R_{\tau_n}\setminus\range{1}{k-1}$ for all $n$ large enough, hence  (\ref{eq:limsup_shift}) follows. 

Now, observe that the RHS of (\ref{eq:limsup_shift}) is $\mathcal T_k$-measurable for all $k$.
In fact, for all $n\geq k$ the set $R_{\tau_n}\setminus\range{1}{k-1}$ corresponds to the set of red vertices for the process \emph{started} at $k$ when the occupied region reaches size $n-k+1$. 
This implies that the LHS of \eqref{eq:limsup_shift} is $\mathcal T$-measurable and therefore the result follows by Kolmogorov 0-1 law and \eqref{eq:P(E1)gamma}.
\end{proof}

Finally we are ready to prove part (ii) of Theorem \ref{thm_critical}.

\begin{proof}[Proof of Theorem \ref{thm_critical}(ii)]
Now we will show that, almost surely, $\liminf_{t\rightarrow \infty}H_t=0$.
Start by observing that, by definition of the process and the properties of the exponential distribution,
\begin{equation}\label{eq:not-simult}
\P( \exists \, (v,w) \in \range{1}{n}^2 \ : \ \tau_v = \mathbf{C}_w + \tau_w)=0.
\end{equation}
Moreover, by Theorem \ref{thm_recovery_io} almost surely there is an infinite increasing sub-sequence of indices $(n_k)_{k\in\N}$ for which $ R_{\tau_{n_k}}=\{n_k\} $ for all $k\in\N$.
Since the process takes place in continuous time, by \eqref{eq:not-simult} for each $k\geq 1$ there is a value $\varepsilon_k\in (0,1)$ so that
$
R_{\tau_{n_k}-\varepsilon_k}=\{n_k\}\setminus \{n_k\} = \emptyset$.
(In fact, at time $\tau_{n_k}$ vertex $\{n_k\}$ is the only one to be red and thus, right before, no vertex can be red.)
Hence the result.
\end{proof}

\paragraph{Acknowledgments.}
E.C.\ acknowledges partial support by ``INdAM--GNAMPA, Project code CUP$\_$ E53C22001930001''.
This project started when T.G.S.\ was visiting the department of Mathematics and Physics at the University of Roma Tre for an internship, which was supported by the Erasmus+ program of the European Union and the Mathematics department of the ENS.
The authors are grateful to an anonymous referee for their helpful suggestions that considerably helped improving the paper.

\nocite{}
\bibliographystyle{amsalpha}
\bibliography{bibliography}

\providecommand{\bysame}{\leavevmode\hbox to3em{\hrulefill}\thinspace}
\providecommand{\MR}{\relax\ifhmode\unskip\space\fi MR }
\providecommand{\MRhref}[2]{%
  \href{http://www.ams.org/mathscinet-getitem?mr=#1}{#2}
}
\providecommand{\href}[2]{#2}
\begin{thebibliography}{CS21b}

\bibitem[AC11]{Ariascastro}
Ery Arias-Castro, \emph{Finite size percolation in regular trees}, Statistics
  \& Probability Letters \textbf{81} (2011), no.~2, 302--309.

\bibitem[AK67]{AK}
Krishna~B. Athreya and Samuel Karlin, \emph{Limit theorems for the split times
  of branching processes}, Journal of Mathematics and Mechanics \textbf{17}
  (1967), no.~3, 257--277.

\bibitem[AN72]{AthreyaNey}
Krishna~B. Athreya and Peter~E. Ney, \emph{Branching processes}, Die
  Grundlehren der mathematischen Wissenschaften, Band 196, Springer-Verlag, New
  York-Heidelberg, 1972. \MR{373040}

\bibitem[BF21]{Besse2021SpreadingPF}
Christophe Besse and Gr{\'e}gory Faye, \emph{Spreading properties for sir
  models on homogeneous trees}, Bulletin of Mathematical Biology \textbf{83}
  (2021).

\bibitem[CS21a]{candellero2021coexistence}
Elisabetta Candellero and Alexandre Stauffer, \emph{Coexistence of competing
  first passage percolation on hyperbolic graphs}, Annales de l'Institut Henri
  Poincare (B) Probabilites et statistiques, vol.~57, Institut Henri
  Poincar{\'e}, 2021, pp.~2128--2164.

\bibitem[CS21b]{candellero2021first}
\bysame, \emph{First passage percolation in hostile environment is not
  monotone}, arXiv preprint arXiv:2110.05821 (2021).

\bibitem[Fin21]{Finn-PhD}
Thomas Finn, \emph{Topics in random growth models}, 2021, Thesis
  (Ph.D.)--University of Bath.

\bibitem[FS22a]{finn2022coexistence-conversion}
Thomas Finn and Alexandre Stauffer, \emph{Coexistence in competing first
  passage percolation with conversion}, The Annals of Applied Probability
  \textbf{32} (2022), no.~6, 4459--4480.

\bibitem[FS22b]{finn2022nonequilibrium-coexistence}
\bysame, \emph{Non-equilibrium multi-scale analysis and coexistence in
  competing first passage percolation}, Journal of the European Mathematical
  Society (2022).

\bibitem[Het00]{Hethcote-InfectiousDeseases}
Herbert~W. Hethcote, \emph{The mathematics of infectious diseases}, SIAM Rev.
  \textbf{42} (2000), no.~4, 599--653. \MR{1814049}

\bibitem[Lig99]{Liggett-StochasticInteractingSystems}
Thomas~M. Liggett, \emph{Stochastic interacting systems: contact, voter and
  exclusion processes}, Grundlehren der mathematischen Wissenschaften
  [Fundamental Principles of Mathematical Sciences], vol. 324, Springer-Verlag,
  Berlin, 1999. \MR{1717346}

\bibitem[SS19]{VA}
Vladas Sidoravicius and Alexandre Stauffer, \emph{Multi-particle diffusion
  limited aggregation}, Inventiones mathematicae \textbf{218} (2019), no.~2,
  491--571.

\end{thebibliography}

\end{document}